\newtheorem{lem}{Lemma}
\newtheorem*{theorem}{Theorem}
\newtheorem*{corollary}{Corollary}
\newtheorem{pr}{Proposition}
\begin{document}

\begin{center}\textbf{On Thompson's conjecture for alternating group of large degree}\end{center}

\begin{center}I. B. Gorshkov \footnote{The work is supported by the 
 Russian Science Foundation (project no. 15-11-10025)}\end{center}

\medskip
\baselineskip=1.5\baselineskip
\textsl{Abstract}: For a finite group $G$, let $N(G)$ denote the set of conjugacy
class sizes of $G$. We show that if every finite group $G$ with trivial
center such that $N(G)$ equals to $N(Alt_n)$, where $n>1361$ and
at least one of numbers $n$ or $n-1$ are decomposed into a sum of two primes, then $G\simeq Alt_n$.

\textsl{Key words}: finite group, alternating group, conjugacy class, Thompson's conjecture.

\section{Introduction}

Let $G$ be a finite group.
Put $N(G)=\{|g^G|\mid g\in G\}$. In 1987 Thompson posed the following conjecture concerning $N(G)$.
\medskip

\textbf{Thompson's Conjecture (see \cite{Kour}, Question 12.38)}. {\it If $L$ is a finite
simple non-abelian group, $G$ is a finite group with trivial
center, and $N(G)=N(L)$, then $G\simeq L$.}

\medskip

It is proved that Thompson's conjecture is valid for many finite simple groups of Lie type (see \cite{Ah}).
Denote the alternating group of degree $n$ by $Alt_n$ and the symmetric group of degree $n$ by $Sym_n$. Alavi and
Daneshkhah \cite{AD} proved that the groups $Alt_n$, where at least one of numbers $n$, $n-1$ or $n-2$ is prime,
are characterized by $N(G)$. This conjecture has also been confirmed for $Alt_{10}, Alt_{16}$,
$Alt_{22}$ and $Alt_{26}$ in \cite{VasT}, \cite{GorTom}, \cite{Xu} and \cite{Liu}, respectively. In \cite{GorA}, the
author showed that if $N(G)=N(Alt_n)$ for $n\geq5$ then $G$ is non-solvable.
In \cite{GorA2}, we obtained some information about composition factors of a group $G$ with $N(G)=N(Alt_n)$, where $n>1361$.

The aim of this paper is to prove the following theorem.

\begin{theorem}
Let $G$ is a finite group with the trivial center such that $N(G)=N(Alt_n)$, $n\geq 27$, $G$ contains a composition factor
$S\simeq Alt_{n-\varepsilon}$, where $\varepsilon$ is a non-negative integer such that the set $\{n-\varepsilon,...,n\}$
does not contain primes and at least one of numbers $n$ or $n-1$ is decomposed into a sum of two primes. Then
$G\simeq Alt_n$.
\end{theorem}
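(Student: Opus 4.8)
Throughout set $L=Alt_n$ and let $p$ be the largest prime with $p\le n$; by Bertrand's postulate $p>n/2$, while the hypothesis that $\{n-\varepsilon,\dots,n\}$ is prime-free gives $p\le n-\varepsilon-1$, so $p\mid|S|$ and $|L|_p=|S|_p=p$. The plan is to prove in turn that $|L|$ divides $|G|$, that $G$ is almost simple with socle $S$, and finally that these force $\varepsilon=0$ and $G\cong L$.

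\textbf{Prime content and a lower bound for $|G|$.} Since $Z(G)=1$, no Sylow subgroup of $G$ is central, so every $r\in\pi(G)$ divides some class size; hence $\pi(G)$ equals the set of primes dividing the numbers in $N(G)=N(L)$, namely $\{r:\ r\le n\}$, and the same description yields $\pi(S)=\pi(L)=\pi(G)$. Next I would note that for every prime $r\le n$ there is an element of $L$ whose centralizer has order coprime to $r$: take a permutation whose nontrivial cycles have distinct lengths, all prime to $r$ and occurring with multiplicity less than $r$ (for $r=2$, a product of cycles of distinct odd lengths), which is routine to arrange for $n\ge 27$ and can be made even. For such $x$ one has $|x^{L}|_r=|L|_r$, so the maximal $r$-part among the elements of $N(L)$ equals $|L|_r$. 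As $N(G)=N(L)$, the same maximum is attained in $G$, whence $|L|_r\le|G|_r$ for every $r$ and therefore $|L|$ divides $|G|$.

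\textbf{Reduction to an almost simple group.} Here the Goldbach hypothesis enters. Writing $n$ or $n-1$ as $s+t$ with $s,t$ primes, I would exhibit an even permutation $g\in L$ that is a product of an $s$-cycle and a $t$-cycle (with a single fixed point in the $n-1$ case); a parity count shows $g$ lies in $Alt_n$ exactly under the stated condition, and its centralizer $C_L(g)=\langle s\text{-cycle}\rangle\times\langle t\text{-cycle}\rangle$ is abelian of order $st$, a $\{s,t\}$-group. Thus $|g^{L}|=\tfrac{n!}{2st}$ has $r$-part equal to $|L|_r$ for every prime $r\notin\{s,t\}$. Combining this with the lower bound above and with the information on composition factors furnished by \cite{GorA2}, I would show that the solvable radical $R=\mathrm{Sol}(G)$ is trivial: a nontrivial minimal normal $r$-subgroup would inflate the $r$-part of the relevant centralizers, contradicting the presence in $N(G)$ of a class size whose $r$-part is full (the element above, for a suitable $r\notin\{s,t\}$). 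With $R=1$ the generalized Fitting subgroup is the product of the nonabelian composition factors, and since $C_G(S)\trianglelefteq G$ with $Z(G)=1$, one deduces that $S$ is the unique component and $G$ is almost simple with socle $S\simeq Alt_{n-\varepsilon}$.

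\textbf{Conclusion and the main obstacle.} For $n-\varepsilon\ge 7$ an almost simple group with socle $Alt_{n-\varepsilon}$ embeds in $\mathrm{Aut}(Alt_{n-\varepsilon})=Sym_{n-\varepsilon}$, so $|G|\le (n-\varepsilon)!$. Together with $|L|=n!/2\mid|G|$ this gives $n(n-1)\cdots(n-\varepsilon+1)\le 2$, which for $n\ge 27$ forces $\varepsilon=0$. Then $n!/2\mid|G|\le n!$, so $|G|\in\{n!/2,\,n!\}$; the value $n!$ would give $G\cong Sym_n$, excluded since $N(Sym_n)\ne N(Alt_n)$ (their largest class sizes differ by a factor of about two). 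Hence $|G|=|L|$ and $G\cong Alt_n$. The principal obstacle is the reduction step: proving $\mathrm{Sol}(G)=1$ and the uniqueness of the component using only the \emph{set} $N(G)$ rather than the multiset of class sizes. The delicate point is converting the divisibility data encoded in $N(G)$—in particular the full $r$-parts coming from the Goldbach element and from the large prime $p>n/2$—into genuine structural control over a hypothetical normal solvable part, which is precisely where the center-free assumption and the prime-graph information must be pushed hardest.
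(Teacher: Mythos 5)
Your outline gets the easy ends of the argument right: the divisibility $|Alt_n|\mid |G|$ is essentially the paper's Lemma \ref{Order} (the paper uses two explicit class sizes such as $n!/2n$ and $n!/4(n-1)$ rather than your general coprime-centralizer elements, but the idea is the same), and the final elimination of $Sym_n$ via $N(Sym_n)\neq N(Alt_n)$ matches Lemma \ref{AnSn}. Your order-counting derivation of $\varepsilon=0$ from almost-simplicity is in fact slicker than the paper's route. But the step you yourself flag as the ``principal obstacle'' --- proving that the solvable radical is trivial and that $S$ is the unique component, so that $G$ is almost simple over $S$ --- is exactly the content of the theorem, and your sketch of it does not work as stated. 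A nontrivial normal $r$-subgroup does not automatically ``inflate the $r$-part of the relevant centralizers'': an $r'$-element can act fixed-point-freely on a normal $r$-group (as in any Frobenius group $\mathbb{Z}_7\rtimes\mathbb{Z}_3$, where the class size of an element of order $3$ carries the full $7$-part), so the presence in $N(G)$ of a class size with full $r$-part is perfectly consistent with $O_r(G)\neq 1$. Ruling this out cannot be done by divisibility bookkeeping alone, and since $N(G)$ is only a set, one also cannot count.

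The paper spends nearly all of its length on precisely this point, and by a different decomposition of the problem. It fixes a maximal normal $K\trianglelefteq G$ with $S\le G/K\le Aut(S)$ and first proves (Lemma \ref{Omega}) that no prime in $(n/2,n]$ divides $|K|$; this already requires Frattini arguments, the Fein--Kantor--Schacher theorem on fixed-point-free elements, Vasil'ev's Lemma \ref{vas2} on minimal polynomials applied to a Frobenius subgroup $\langle y\rangle\rtimes\langle z\rangle$ of $S$, and a separate ``length'' argument when a minimal normal subgroup is a product of nonabelian simple groups. It then splits according to the Goldbach decomposition $p+q$ into the cases $p=q$, $q=3$ and $p>q>3$, pins down $|g^G|=n!/p(n-p)!$ for an element $g$ of order $p$ (Lemma \ref{alpha}), analyzes the Sylow $q$-structure of $C_G(g)$ (Lemmas \ref{vse}--\ref{epsi4}), and uses Hanson's theorem together with congruences modulo $p$ to force $\varepsilon=0$ and finally $K=1$. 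None of this machinery, nor any substitute for it, appears in your proposal, so what remains unproved is the heart of the matter.
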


This Theorem and the main result of \cite{GorA2} (see Lemma 13 below) imply immediately the following corollary.

\begin{corollary}
Thompson's conjecture is true for the alternating group of degree $n$ if $n>1361$ and at least one of numbers $n$ or $
n-1$ is decomposed into a sum of two primes.
\end{corollary}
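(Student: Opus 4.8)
The plan is to transport the arithmetic carried by the equality $N(G)=N(Alt_n)$ into statements about composition factors, to exploit the sum-of-two-primes hypothesis to force $\varepsilon=0$, and finally to eliminate the solvable radical and every superfluous simple factor so that $G$ embeds into $Sym_n$ with $Alt_n\le G$. Set $H=Alt_n$ and let $p$ be the largest prime with $p\le n$. Since $\{n-\varepsilon,\dots,n\}$ contains no prime we have $p\le n-\varepsilon-1$, and since $n\ge 27$ Bertrand's postulate places a prime in $(n/2,n)$, so in fact $n/2<p\le n-\varepsilon-1$; consequently $p$ exactly divides $|H|$ and the Sylow $p$-subgroups of $H$ are cyclic of order $p$.

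First I would record the arithmetic consequences of $N(G)=N(H)$. Using the standard lemmas on class sizes together with Lemma 13 one gets $\pi(G)=\pi(H)$, that $p$ exactly divides $|G|$, and that a $p$-element $x\in G$ satisfies $|x^{G}|=|x^{H}|$, whence $|C_G(x)|=|C_H(x)|$. Because $p>n/2$, each of the integers $n-\varepsilon+1,\dots,n$ lies strictly between $p$ and $2p$, so $p\nmid n!/(n-\varepsilon)!$; therefore among the composition factors of $G$ only $S\cong Alt_{n-\varepsilon}$ has order divisible by $p$, and every $p$-element of $G$ is carried by the section $S$.

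The decisive step is to pin down $\varepsilon=0$ from the hypothesis that $n$ or $n-1$ equals $p_1+p_2$ for primes $p_1,p_2$, which I may take odd (the even member of $\{n,n-1\}$ being the relevant Goldbach sum). In $H$ the product of a $p_1$-cycle and a $p_2$-cycle is an even permutation of order $p_1p_2$, so $N(H)=N(G)$ reflects the adjacency of $p_1$ and $p_2$ in the prime graph, and I would deduce that $G$ possesses an element of order $p_1p_2$. Since the interval has no primes, $p_1,p_2\le n-\varepsilon-1$, so by the previous paragraph such an element must be realised inside $S\cong Alt_{n-\varepsilon}$; but $Alt_{n-\varepsilon}$ admits an element of order $p_1p_2$ only when $p_1+p_2\le n-\varepsilon$. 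If $n=p_1+p_2$ this gives $\varepsilon\le 0$, hence $\varepsilon=0$; if only $n-1=p_1+p_2$ it gives $\varepsilon\le 1$, and the residual case $\varepsilon=1$ I would exclude by comparing the matched value $|C_G(x)|=|C_H(x)|$ for the large $p$-element against the centralizer order forced by a factor $Alt_{n-1}$, the two factorial expressions being incompatible for $n\ge 27$.

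With $\varepsilon=0$ the simple group $Alt_n$ is a composition factor of $G$. Passing to $\overline G=G/R$ with $R$ the solvable radical, the socle of $\overline G$ is a direct product of nonabelian simple groups, one of which is $Alt_n$, and $\overline G$ embeds into $\mathrm{Aut}(\mathrm{soc}(\overline G))$. Using the cyclic Sylow $p$-subgroup and the matched centralizer of the $p$-element, I would show that this $Alt_n$ exhausts the socle and that $C_{\overline G}(\mathrm{soc}(\overline G))=1$, so $\overline G\le\mathrm{Aut}(Alt_n)=Sym_n$ with $Alt_n\le\overline G$; then $\overline G\in\{Alt_n,Sym_n\}$, and $Sym_n$ is ruled out because $N(Sym_n)\ne N(Alt_n)$. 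Finally one shows $R=1$: a nontrivial $R$ would contain a minimal normal $r$-subgroup on which the $p$-element acts, and analysing the class sizes of the resulting $r$- and $rp$-elements yields a value outside $N(H)$. I expect this last step to be the main obstacle, since certifying that $R=1$ and that no extra simple factor occurs requires delicate control of centralizer orders and of the exact $r$-parts of class sizes; the exclusion of $\varepsilon=1$ in the $n-1=p_1+p_2$ case is a secondary difficulty of the same flavour. Once $R=1$ is established, $\overline G=G\cong Alt_n$.
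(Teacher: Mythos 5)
Your top-level decomposition is exactly the paper's: the Corollary is obtained by combining Lemma \ref{GorT} (which supplies the composition factor $Alt_{n-\varepsilon}$ with no primes in $(n-\varepsilon,n]$) with the Theorem, so the entire burden is the Theorem, and that is what your sketch really addresses. There, however, your central mechanism for forcing $\varepsilon=0$ has two concrete gaps. First, the passage from the class size $n!/(2p_1p_2)\in N(G)$ to an element of order $p_1p_2$ in $G$ relies on a centralizer of order $p_1p_2$ with nontrivial center being cyclic; this works only for $p_1\neq p_2$. When the relevant even member of $\{n,n-1\}$ equals $2p$ the centralizer has order $p^2$ and may be elementary abelian, so no element of order $p^2$ is produced; the paper devotes all of Proposition \ref{pr1} to exactly this case. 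Second, and more seriously, your claim that an element of order $p_1p_2$ ``must be realised inside $S\cong Alt_{n-\varepsilon}$'' fails: only the larger Goldbach prime exceeds $n/2$, and Lemma \ref{Omega} excludes from $\pi(K)$ only the primes in $(n/2,n]$. The smaller prime $q$ can perfectly well divide $|K|$, in which case the $q$-part of your element lies in $K$ and its image in $G/K$ has order only $p$, so you cannot conclude that $Alt_{n-\varepsilon}$ contains an element of order $p q$ and hence cannot conclude $p+q\le n-\varepsilon$.

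The paper's actual route is quite different at this point: it first pins down $|g^G|=n!/(p(n-p)!)$ for a $p$-element $g$ (Lemma \ref{alpha}), then analyzes the Sylow $q$-structure of $C_G(g)$ and a Frobenius configuration inside $S$ to bound $n-p-\varepsilon\le 4$ (Lemmas \ref{vse}--\ref{epsi4}), and finally kills $\varepsilon$ by showing that $|g^K|$ is a quotient of products of consecutive integers each of whose prime parts must be $\equiv 1\pmod p$, which is arithmetically impossible by Hanson's theorem (Lemmas \ref{Pi}--\ref{epsilon} and the lemma following). Your proposed exclusion of the residual case $\varepsilon=1$ by ``incompatible factorial expressions'' runs into the same problem: the discrepancy $n-p$ between $|C_{Alt_{n-1}}(g)|$ and $|C_{Alt_n}(g)|$ can in principle be absorbed into $|g^K|$, so a mere comparison of factorials is not a contradiction without the mod-$p$ analysis of $|g^K|$. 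Finally, the elimination of the radical and of extra simple factors, which you correctly flag as the main obstacle, is where most of the paper's work lives (Lemmas \ref{Order}, \ref{Omega} and the closing lemma of Section 7); as written your proposal asserts rather than proves these steps.
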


Even positive integers, which can not be decomposed into a sum of two primes, are currently unknown.

\section{Notation and preliminary results}

All notation are standard and can be found in \cite{Gore}.

\begin{lem}[{\rm\cite[Lemma 4]{VasT}}]\label{vas}
Suppose that $G$ is a finite group with the trivial center and $p$ is a prime
from $\pi(G)$ such that $p^2$ does not divide $|x^G|$ for all $x$ in G. Then a Sylow $p$-subgroup
of $G$ is elementary abelian.
\end{lem}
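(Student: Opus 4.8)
The plan is to translate the divisibility hypothesis into information about conjugation inside a Sylow $p$-subgroup $P$, and then to recover the structure of $P$ from a coprime action combined with the triviality of $Z(G)$. The whole difficulty lies in the fact that the numeric hypothesis only controls the \emph{$p$-part} of a class size, $|x^G|_p=[G:C_G(x)]_p$, and this quantity is visible inside a Sylow subgroup only after one is careful about fusion.

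First I would record the basic translation. Let $x$ be any $p$-element. Since $x\in Z(C_G(x))$, it lies in every Sylow $p$-subgroup of $C_G(x)$; choose one, say $S$, and extend it to a Sylow $p$-subgroup $P$ of $G$. Then $C_P(x)=P\cap C_G(x)$ is a $p$-subgroup of $C_G(x)$ containing $S$, hence equals $S$, so $C_P(x)\in\mathrm{Syl}_p(C_G(x))$ and $[P:C_P(x)]=[G:C_G(x)]_p=|x^G|_p\in\{1,p\}$ by hypothesis. I would stress here that for a \emph{different} Sylow subgroup containing $x$ the index $[Q:C_Q(x)]$ may strictly exceed $|x^G|_p$ (this already happens for double transpositions in $S_4$), so the bound $\le p$ is an assertion about a Sylow subgroup adapted to $x$, not about all of them; tracking this is what makes the argument nontrivial.

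Second, having fixed a Sylow $p$-subgroup $P$, I would aim to show separately that $P$ is abelian and that $P$ has exponent $p$. The exponent step is the clean one, so I would set it up via Burnside's fusion theorem: once $P$ is abelian, $P\le C_G(P)$, the factor $W:=N_G(P)/C_G(P)$ is a $p'$-group acting faithfully on $P$ and controlling $G$-fusion in $P$. Each $w\in W$ is induced by a $p'$-element $s\in N_G(P)$ (take any preimage and pass to its $p'$-part), and by coprime action $C_P(s)\in\mathrm{Syl}_p(C_G(s))$, whence $[P:C_P(w)]=|s^G|_p\le p$. Writing the abelian group $P$ additively, $C_P(w)=\ker(w-1)$ and $[P,w]=\operatorname{im}(w-1)$, so $|[P,w]|\le p$ for every $w$. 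Then for any $x\in P$ of order $p^2$ one gets $[x^p,w]=p\,[x,w]=0$, i.e. $x^p\in C_P(w)$ for all $w$, so that $P^p$ (the subgroup generated by $p$-th powers) is centralized by $W$.

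The main obstacle is exactly the step that forces this $W$-fixed $p$-th power to vanish, and the analogous step that rules out $P'\ne 1$. Here $Z(G)=1$ must be used globally, and it is used delicately: the examples $A_4$ at $p=3$ (where $N_G(P)$ fixes a noncentral element but no nontrivial $p$-th power) and the Frobenius group $C_q\rtimes C_{p^2}$ with $p^2\mid q-1$ (where a kernel element $a$ satisfies $C_P(a)=1$, hence $|a^G|_p=p^2$) show that the failure of elementary-abelianness is detected not by elements normalizing $P$ but by a transverse $p'$-element with a $p$-deficient centralizer. Thus the heart of the proof is to manufacture, from a hypothetical nontrivial $W$-fixed $p$-th power $y\in P^p$ (or from a nontrivial commutator $c\in P'\cap Z(P)$), an element $a\in G$ with $|C_G(a)|_p\le |P|/p^2$, i.e.\ $p^2\mid |a^G|$, contradicting the hypothesis. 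Since $Z(G)=1$ makes $y$ (resp. $c$) noncentral, it is moved by some $g\notin C_G(y)$, and one must extract from this motion a $p'$-element whose fixed-point subgroup on a suitable Sylow subgroup is too small; carrying this out, and verifying that the adapted-Sylow count of the first step indeed applies to the element produced, is the step I expect to be hard, whereas the coprime-action bookkeeping above is routine.
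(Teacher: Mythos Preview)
The paper does not prove this lemma at all: it is quoted verbatim as \cite[Lemma~4]{VasT} and used as a black box, so there is no ``paper's own proof'' to compare your argument against.

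As for your proposal itself, it is not a proof but an outline with an explicitly acknowledged gap, and that gap is the entire content of the lemma. You correctly set up the translation $[P:C_P(x)]=|x^G|_p\le p$ for a Sylow subgroup adapted to $x$, and your coprime--action bookkeeping for the exponent step (once $P$ is known to be abelian) is fine up to the point where you obtain $P^p\le C_P(W)$. But you then stop at precisely the two places that matter: you do not show $P'=1$, and you do not show that a nontrivial $W$--fixed element of $P^p$ leads to some $a\in G$ with $p^2\mid |a^G|$. Your own examples (the Frobenius group $C_q\rtimes C_{p^2}$, and $A_4$) correctly illustrate that the contradiction must come from a $p'$--element whose centralizer is $p$--deficient, but you have not produced such an element from the hypothesis $Z(G)=1$; saying ``some $g\notin C_G(y)$ moves $y$'' is not enough, since that $g$ need not be a $p'$--element and its $p'$--part may well centralize $y$. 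So the proposal identifies the right shape of the argument and the right obstacle, but does not overcome it; as written it is a plan, not a proof.
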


\begin{lem}[{\rm\cite[Teorem 1]{Kantor}}]\label{kant}
Let $G$ be a finite group acting transitively on a set $\Omega$ with $|\Omega|>1$. Then there exist a prime $r$ and
a $r$-element $g\in G$ such that $g$ acts without fixed points on $\Omega$.
\end{lem}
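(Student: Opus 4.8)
The plan is to prove the slightly stronger CFSG-based statement that every finite transitive permutation group of degree greater than one contains a fixed-point-free element of prime power order (the theorem of Fein, Kantor and Schacher); the existence of \emph{some} fixed-point-free element is the classical theorem of Jordan, but the prime-power refinement is what requires real work. Fix $\alpha\in\Omega$ and let $H=G_\alpha$, so $[G:H]=|\Omega|>1$. An element fixes a point of $\Omega$ exactly when it lies in some conjugate of $H$, so the elements \emph{with} a fixed point are precisely those in $\bigcup_{x\in G}H^x$; thus I must produce a prime $r$ and an $r$-element avoiding this union. I would argue by induction on $|\Omega|$. First I reduce to a faithful action: if $K$ is the kernel, then $K\le H^x$ for all $x$, so $gK$ is a derangement for $G/K$ on $\Omega$ iff $g$ is one for $G$; and if $\bar g\in G/K$ is a derangement of order a power of $r$, its $r$-part lifts to an $r$-element of $G$ that is still a derangement (take a preimage $g$, split $g=g_r g_{r'}$, and note $\overline{g_r}=\bar g$). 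So I may assume $G\le\mathrm{Sym}(\Omega)$.

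Next I reduce to the primitive case. Suppose $\Omega$ carries a nontrivial $G$-invariant block system $\Sigma$ with $1<|\Sigma|<|\Omega|$. Any element moving every block automatically fixes no point of $\Omega$, since a fixed point would lie in a fixed block; hence a prime-power derangement for the induced transitive action of $G$ on $\Sigma$ is also one on $\Omega$. As $|\Sigma|<|\Omega|$, the inductive hypothesis applied to the faithful transitive action of $G/K_\Sigma$ on $\Sigma$ (with $K_\Sigma$ its kernel), together with the lifting of the $r$-part exactly as above, supplies the required element. Therefore I may assume $G$ is primitive and $H$ is maximal in $G$.

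For primitive $G$ I would invoke the O'Nan--Scott theorem, splitting into the finitely many structural types according to the socle $N=\mathrm{soc}(G)$. The affine type is immediate: $N$ is an elementary abelian $p$-group acting regularly, so any nonidentity element of $N$ is fixed-point-free of prime order $p$. The types in which $N$, or a full component of it, acts regularly are handled the same way, by choosing a nontrivial prime-power element of the regular normal subgroup. In simple diagonal type, with $N=T^k$ ($k\ge 2$) and the point stabilizer meeting $N$ in the diagonal, a socle element fixes a point only when all its coordinates are $T$-conjugate, so $(t,1,\dots,1)$ with $t$ a nontrivial $r$-element of $T$ is a fixed-point-free $r$-element. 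The product-action and compound types then reduce coordinatewise to the behaviour of the primitive component on a smaller set, which is covered by induction, provided one can find the component derangement \emph{inside the simple socle}.

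This constraint is exactly what makes the almost simple type, $T\trianglelefteq G\le\mathrm{Aut}(T)$ with $T$ nonabelian simple and $H$ maximal, the main obstacle. Here there need be no regular normal subgroup, and for every simple group $T$ and every maximal $H$ I must exhibit a prime $r$ and an $r$-element of $T$ avoiding all $G$-conjugates of $H$. I would organise this along the classification of finite simple groups: for $T$ alternating or of Lie type, the natural strategy is to take $r$ to be a primitive prime divisor (in the sense of Zsigmondy's theorem) of a suitable $q^{e}-1$, so that an $r$-element has order not dividing $|H|$ and hence cannot lie in any conjugate of $H$; the sporadic groups are then checked individually. Controlling which primes can divide $|H|$, uniformly across the Lie-type families, is the delicate and lengthy part, and it is precisely this CFSG-dependent analysis of maximal subgroups and element orders that constitutes the crux of the proof.
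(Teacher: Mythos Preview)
The paper does not prove this lemma at all: it is simply quoted from the literature, with the citation \cite[Theorem~1]{Kantor} (Fein--Kantor--Schacher) serving as the entire justification. There is therefore no ``paper's own proof'' to compare against.

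That said, your outline is correct and is precisely the strategy of the cited source: Jordan's elementary counting gives some derangement, the prime-power refinement reduces via blocks to the primitive case, and then the O'Nan--Scott theorem funnels everything into the almost-simple case, which is handled by a CFSG case analysis using Zsigmondy primes to find $r$-elements whose orders do not divide $|H|$. Your treatment of the diagonal type with the element $(t,1,\dots,1)$ and of the lifting of the $r$-part through the block kernel are both accurate. The one point to be slightly careful about is the product-action reduction: you need the inductive derangement to lie in the socle $T$ of the primitive component (so that the diagonal copy in $T^k$ is again a socle $r$-element), which is exactly why the almost-simple analysis must produce the derangement inside $T$ rather than merely in $G$; you flag this correctly.
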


\begin{lem}\label{kant2}
Let $G$ is a finite group, $g\in G$, $|\pi(g)|=1$ and $|g^G|>1$. Then there exists $g'\in G$ such that
$\pi(g)\subseteq\pi(|(g')^G|)$ and $|\pi(g')|=1$.
\end{lem}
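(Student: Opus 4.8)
The plan is to apply Kantor's theorem (Lemma~\ref{kant}) to a conjugation action and then exploit a symmetry between commuting elements. Since $|g^G|>1$, the group $G$ acts transitively by conjugation on the set $\Omega=g^G$ with $|\Omega|>1$. Lemma~\ref{kant} then supplies a prime $r$ and an $r$-element $g'\in G$ acting on $\Omega$ without fixed points. By construction $|\pi(g')|=1$, so $g'$ is the natural candidate; it remains only to verify that $p\in\pi(|(g')^G|)$, where $\pi(g)=\{p\}$.

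The key step is a symmetry observation. For $x,y\in G$, the statement ``$y$ fixes some point of $x^G$ under conjugation'' means that some conjugate $x^a$ commutes with $y$, and this is equivalent to the manifestly symmetric condition that some conjugate of $x$ commutes with some conjugate of $y$; the same symmetric condition is exactly what ``$x$ fixes some point of $y^G$'' amounts to. Hence $y$ has a fixed point on $x^G$ if and only if $x$ has a fixed point on $y^G$. Applying this with $x=g$ and $y=g'$, and using that $g'$ is fixed-point-free on $g^G$, I conclude that $g$ acts without fixed points on $(g')^G$ by conjugation.

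Finally, since $|\pi(g)|=1$, the cyclic group $\langle g\rangle$ is a nontrivial $p$-group, so every orbit of $\langle g\rangle$ on $(g')^G$ has $p$-power length. As $g$ has no fixed points, every such orbit has length at least $p$, whence $p$ divides $|(g')^G|=\sum(\text{orbit lengths})$. Therefore $\pi(g)=\{p\}\subseteq\pi(|(g')^G|)$ and $|\pi(g')|=1$, as required.

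I expect the main (indeed the only genuine) obstacle to be spotting the commuting-symmetry that transfers the fixed-point-free action from $g^G$ to $(g')^G$; once that equivalence is in place, the orbit-counting argument using that $g$ is a $p$-element is entirely routine. Note also that no separate treatment of the case $p\mid|g^G|$ is needed, since the Kantor argument handles all situations uniformly.
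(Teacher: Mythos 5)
Your proposal is correct and follows the same route the paper indicates: the paper's proof is simply ``It follows from Lemma~\ref{kant},'' and you have filled in that derivation properly by applying Kantor's theorem to the conjugation action on $g^G$, transferring the fixed-point-free property to $g$ acting on $(g')^G$ via the commuting symmetry, and counting $\langle g\rangle$-orbits of $p$-power length. All steps check out, including the implicit facts that $g'\neq 1$ (it acts without fixed points on a nonempty set) and that an orbit of length $1$ for $g$ is one for all of $\langle g\rangle$.
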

\begin{proof}
It follows from Lemma \ref{kant}
\end{proof}

\begin{lem}\label{gfactor}
Let $G=T\leftthreetimes \langle g\rangle$, $(|T|,|g|)=1$, $|\pi(g)|=1$ and $|g^G|>1$. Then there exist
normal subgroups $K$ and $R$ in $G$ such that $K<R$, $R/K$ is a minimal normal subgroup of $G/K$, $|g^{R}|>1$ and
$[g,T]\leq R$.
\end{lem}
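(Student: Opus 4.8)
The plan is to produce the required chief factor directly from the commutator $U:=[g,T]$. First I would record the two structural consequences of the hypotheses. Since $|g^G|>1$, the element $g$ is non-central in $G$; as $G=T\langle g\rangle$ and $g$ centralizes $\langle g\rangle$, this forces $g$ not to centralize $T$, so that $U=[g,T]=[T,g]\neq 1$. Since $(|T|,|g|)=1$, the cyclic group $\langle g\rangle$ acts coprimely on the normal subgroup $T$, and the standard facts of coprime action become available.

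Next I would use coprime action to pin down $U$. The commutator $[T,\langle g\rangle]$ is normal in $T$ and $\langle g\rangle$-invariant, hence normal in $G=\langle T,g\rangle$; thus $U\trianglelefteq G$ with $1\neq U\le T$. The key point is the stability identity $[N,A]=[N,A,A]$ valid for coprime action, which applied to $N=T$, $A=\langle g\rangle$ yields $[U,g]=[[T,g],g]=[T,g]=U\neq 1$. Therefore $g$ does not centralize $U$: some $u\in U$ satisfies $ug\neq gu$, i.e. $g^{u}\neq g$, and hence $|g^{U}|>1$.

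Finally I would build $R$ and $K$. Refine the normal series $1\trianglelefteq U\trianglelefteq G$ to a chief series of $G$, put $R=U$, and let $K$ be the last term of this series strictly below $U$. Then $K<R$ are normal in $G$, the factor $R/K$ is by construction a minimal normal subgroup of $G/K$, we have $[g,T]=U=R\le R$, and $|g^{R}|=|g^{U}|>1$, which gives all four assertions.

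The main obstacle is the simultaneous demand that $[g,T]\le R$ and $|g^{R}|>1$ while $R/K$ is a chief factor: the naive route of choosing a chief factor on which $g$ acts nontrivially secures $|g^{R}|>1$ but loses control of the inclusion $[g,T]\le R$. The device that resolves this is to take $R$ to be $[g,T]$ itself and to verify non-centrality of $g$ on it; here coprimeness is essential, since the stability identity $[N,A]=[N,A,A]$ is precisely what prevents $g$ from centralizing its own commutator $[g,T]$ (for a non-coprime action $g$ could act nilpotently and centralize $[g,T]$). I would also stress that the conclusion asks only for $|g^{R}|>1$, not for a nontrivial action of $g$ on the factor $R/K$ itself, which is exactly what makes the choice $R=[g,T]$ legitimate.
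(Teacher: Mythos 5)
Your proposal is correct, and it actually supplies a complete argument where the paper offers only a one-line citation (``follows from Lemma~\ref{kant2}, Frattini argument and Gorenstein 5.3.2''). The routes are not the same: you do not use Lemma~\ref{kant2} or a Frattini argument at all, and instead everything rests on the single coprime-action identity $[T,g,g]=[T,g]$, applied to $U=[T,g]\trianglelefteq G$, followed by a refinement of $1\trianglelefteq U\trianglelefteq G$ to a chief series with $R=U$. Each step checks out: $|g^G|>1$ together with $G=T\langle g\rangle$ forces $[T,g]\neq 1$; the identity $[U,g]=U$ is legitimate here since $\langle g\rangle$ is a cyclic (hence solvable) group of order coprime to $|T|$; and $[U,g]=U\neq 1$ gives $R=U\not\leq C_G(g)$, i.e.\ $|g^R|>1$. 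What your choice buys, compared with the generic strategy of hunting for a chief factor on which $g$ acts nontrivially, is exactly the inclusion $[g,T]\leq R$, since $R$ \emph{is} $[g,T]$.

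One remark worth adding: your closing comment that the lemma ``asks only for $|g^{R}|>1$, not for a nontrivial action of $g$ on the factor $R/K$'' slightly undersells your own construction. Because $[R,g]=R$, you get for free that $[\,R/K,\,gK\,]=RK/K=R/K\neq 1$, so $g$ acts nontrivially on the chief factor $R/K$ itself. This stronger property is in fact what the paper uses when it invokes the lemma inside the proof of Lemma~\ref{Omega} (where the minimal normal subgroup $H$ of $\widehat{G}$ is required to satisfy $|x^H|>1$ with $x$ taken modulo $L$), so it is worth stating explicitly rather than disclaiming. If one also wants $R/K$ to be the \emph{unique} minimal normal subgroup of $G/K$, as the application's wording suggests, one should instead take $K$ maximal among normal subgroups of $G$ not containing $U$ and $R=UK$; the same identity $[U,g]=U\not\leq K$ then yields all the conclusions.
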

\begin{proof}
This follows from Lemma \ref{kant2}, Frattini argument and \cite[Lemma 5.3.2]{Gore}.
\end{proof}
\begin{lem}[{\rm \cite[Theorem 5.2.3]{Gore}}]\label{Gore}
Let $A$ be a $\pi(G)'$-group of automorphisms of an abelian group $G$. Then $G=C_G(A)\times[G,A]$.
\end{lem}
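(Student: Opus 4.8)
The plan is to reduce the statement to a splitting of $\mathbb{Z}[A]$-modules and exploit the coprimality hypothesis twice. Since $A$ is a $\pi(G)'$-group of automorphisms, $\gcd(|A|,|G|)=1$, so the integer $|A|$ acts invertibly on the finite abelian group $G$; I would fix an integer $m$ with $m|A|\equiv 1\pmod{\exp(G)}$, so that multiplication by $m|A|$ is the identity endomorphism of $G$. Viewing $G$ additively as a module over the group ring $\mathbb{Z}[A]$, the two relevant subgroups are $C_G(A)=\{g\in G:\ a(g)=g \text{ for all } a\in A\}$, the submodule of fixed points, and $[G,A]=\langle a(g)-g:\ g\in G,\ a\in A\rangle$, the image $IG$ of the augmentation ideal $I=\langle a-1:\ a\in A\rangle$ acting on $G$. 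Both are $A$-invariant, and as $G$ is abelian every subgroup is normal, so it suffices to prove the internal direct sum decomposition $G=C_G(A)\oplus[G,A]$.

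The central device is the norm element $\sigma=\sum_{a\in A}a\in\mathbb{Z}[A]$, acting on $G$ by $\sigma\cdot g=\prod_{a\in A}a(g)$. I would first record the two elementary identities $\sigma\sigma=|A|\sigma$ and $(a-1)\sigma=\sigma(a-1)=0$, valid already in $\mathbb{Z}[A]$. Setting $\phi=m\sigma$, an endomorphism of $G$, one then computes $\phi^2=m^2\sigma^2=m^2|A|\sigma=(m|A|)(m\sigma)=m\sigma=\phi$, using that $m|A|$ is the identity on $G$. Thus $\phi$ is idempotent, and consequently $G=\operatorname{im}\phi\oplus\ker\phi$.

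It then remains to identify the two summands. Since every value $\phi(g)=m(\sigma\cdot g)$ is $A$-fixed, $\operatorname{im}\phi\subseteq C_G(A)$; conversely on $C_G(A)$ the element $\sigma$ acts as multiplication by $|A|$, so $\phi$ restricts there to multiplication by $m|A|=\mathrm{id}$, giving $C_G(A)\subseteq\operatorname{im}\phi$ and hence $\operatorname{im}\phi=C_G(A)$. For the kernel, $I\sigma=0$ yields $\phi([G,A])=0$, so $[G,A]\subseteq\ker\phi$; for the reverse inclusion I would rewrite the complementary idempotent as
\[
1-\phi=(m|A|)\cdot 1-m\sigma=m\sum_{a\in A}(1-a),
\]
which maps $G$ into $IG=[G,A]$, whence $\ker\phi=\operatorname{im}(1-\phi)\subseteq[G,A]$. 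Combining the inclusions gives $\ker\phi=[G,A]$, and therefore $G=C_G(A)\oplus[G,A]$, i.e.\ in multiplicative notation the claimed $G=C_G(A)\times[G,A]$. The one genuine subtlety — the step I expect to be the main obstacle — is precisely this last equality $\ker\phi=[G,A]$: the containment $[G,A]\subseteq\ker\phi$ is immediate, but the reverse inclusion is what forces a second use of the coprimality hypothesis, through the rewriting of $1-\phi$ as an element of the augmentation ideal acting on $G$.
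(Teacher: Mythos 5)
Your proof is correct and is the standard averaging (norm-element) argument for coprime action on a finite abelian group — essentially the proof given in the cited source, Gorenstein's Theorem 5.2.3. The paper itself supplies no proof of this lemma, only the citation, so there is nothing further to compare.
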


\begin{lem}[{\rm\cite[Lemma 1.6]{GorA2}}]\label{Pord}
 Let $S$ be a non-abelian finite simple group. If $p\in \pi(S)$ then there exist $a\in N(S)$ and $g\in S$ such that
$|a|_p=|S|_p$, $|g^S|=a$ and $|\pi(g)|=1$.
\end{lem}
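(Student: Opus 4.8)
The class size satisfies $|g^S|=|S|/|C_S(g)|$, so for $a=|g^S|$ the requirement $|a|_p=|S|_p$ is equivalent to $p\nmid|C_S(g)|$. Since any nontrivial $p$-element lies in its own centralizer, an element with $p\nmid|C_S(g)|$ and $|g^S|>1$ cannot be a $p$-element; combined with $|\pi(g)|=1$ this reduces the lemma to the following: for each $p\in\pi(S)$ exhibit a nontrivial $q$-element $g$ with $q\neq p$ whose centralizer has order coprime to $p$. I would first note that Kantor's lemma (Lemma \ref{kant}), applied to the transitive conjugation action of $S$ on its Sylow $p$-subgroups, already yields a prime-power element normalizing no Sylow $p$-subgroup -- this is morally the right element, but it only guarantees $g\notin N_S(P)$ for every $P$, which is strictly weaker than $p\nmid|C_S(g)|$. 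Hence I would secure the full statement by a case analysis over the classification of finite simple groups.

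For $S=Alt_n$ I would exploit the cycle-type formula $|C_{Sym_n}(g)|=\prod_i i^{a_i}a_i!$, where $a_i$ is the number of $i$-cycles of $g$. For a $q$-element every $i$ with $a_i>0$ is a power of $q$, so each factor $i^{a_i}$ is a $q$-power and is coprime to $p$ as soon as $q\neq p$; thus $p\nmid|C_{Sym_n}(g)|$ precisely when $a_i<p$ for all $i$. For odd $p\geq5$ I would fix an odd prime $q<p$ and read off the base-$q$ digits of $n$ as the multiplicities $a_{q^j}$ (each digit is $\leq q-1<p$), producing a $q$-element that is automatically even; then $|C_{Alt_n}(g)|\mid|C_{Sym_n}(g)|$ keeps the centralizer $p'$. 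For $p=3$ one is forced to take $q=2$ and fix the parity by replacing a single $2^{\,j}$-cycle with two $2^{\,j-1}$-cycles, which preserves all multiplicities below $3$. The genuinely delicate subcase is $p=2$: here $a_i\leq1$ is needed, i.e. distinct cycle lengths that are powers of one odd prime $q$ and sum to $n$, which I would arrange by choosing $q$ so that $n$ is a sum of few distinct $q$-powers (existence for large $n$ following from Bertrand-type prime estimates), settling the remaining small $n$ directly.

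For the groups of Lie type over a field of characteristic $r$ I would take $g$ to be a regular semisimple element, so that $C_S(g)$ is contained in a maximal torus $T$; when $p=r$ every maximal torus has order coprime to $r$, and for $p\neq r$ I would select a torus $T$ of order coprime to $p$ by using a Zsigmondy primitive prime divisor of a suitable $r^{d}-1$, which simultaneously lets one take the regular element of prime-power order and forces $C_S(g)\leq T$ to avoid $p$; Lemma \ref{Gore} is convenient for the coprime bookkeeping inside $T$. The sporadic groups and the finitely many small exceptions would be verified directly from the Atlas. I expect the Lie-type case to be the main obstacle: producing a regular element of genuine prime-power order whose centralizing torus is uniformly forced to be a $p'$-group across all families and ranks, including the exceptional types, the defining characteristic, and the small primes $p\in\{2,3\}$, is exactly where the Zsigmondy and torus-order computations become intricate.
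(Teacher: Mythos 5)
The paper contains no internal proof of this lemma: it is quoted verbatim from \cite[Lemma 1.6]{GorA2}, so the only comparison available is against what your argument would actually need to establish. Your opening reduction is correct --- $|a|_p=|S|_p$ is exactly the condition $p\nmid|C_S(g)|$, which forces $g$ to be a nontrivial $q$-element for some $q\neq p$ --- and a case analysis over the classification is indeed the realistic route. But the alternating-group case, the one this paper actually relies on, has two concrete failures. First, for $p=3$ you are not ``forced to take $q=2$'': any $q\neq 3$ is admissible, and in fact sometimes $q=2$ cannot work. Your parity repair (splitting one $2^{j}$-cycle into two $2^{j-1}$-cycles) can raise a cycle multiplicity to $3$ and thereby plant a factor $3!$ in the centralizer; this is not a removable blemish, since for $n=15$ the partition $8+4+2+1$ is the \emph{only} partition of $15$ into powers of $2$ with all multiplicities at most $2$, and it is an odd permutation. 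Hence every $2$-element of $Alt_{15}$ has centralizer of order divisible by $3$, and one must instead take, say, two $7$-cycles ($|C_{Sym_{15}}(g)|=7^2\cdot 2$).

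Second, for $p=2$ your demand of pairwise distinct cycle lengths that are powers of a single odd prime is both unattainable in general ($16$ is not a sum of distinct powers of any one odd prime) and stronger than necessary: you only need $|C_{Alt_n}(g)|$ odd, which tolerates $|C_{Sym_n}(g)|_2=2$ whenever $C_{Sym_n}(g)$ contains an odd permutation. For instance, three $5$-cycles in $Alt_{16}$ give $|C_{Sym_{16}}(g)|=5^3\cdot 3!$ with an odd-permutation swap of two cycles inside the centralizer, so $|C_{Alt_{16}}(g)|=5^3\cdot 3$ is odd. So the combinatorial core must be reworked with the correct target --- controlling the $p$-part of $\prod_i i^{a_i}a_i!$ up to the index $2$ absorbed by passing from $Sym_n$ to $Alt_n$ --- rather than the over-strong digit conditions you impose. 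The Lie-type and sporadic portions remain, as you acknowledge, a programme rather than a proof; as written, the argument does not yet establish the lemma even for $S=Alt_n$.
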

\begin{lem}[{\rm \cite[Lemma 14]{Vac}}]\label{vac2}
Let $S$ be a non-abelian finite simple group. Any odd element from $\pi(Out(S))$ either belongs to $\pi(P)$ or does not exceed
$m/2$, where $m=max_{p\in\pi(S)}$.
\end{lem}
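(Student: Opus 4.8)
The plan is to prove this case-by-case against the classification of finite simple groups, since the structure of $Out(S)$ is explicitly known in every case, and to read off that the largest prime $m=\max_{p\in\pi(S)}p$ dividing $|S|$ dominates the odd field primes. First I would dispose of the groups with small outer automorphism group. If $S$ is an alternating group $Alt_k$ with $k\neq 6$, or a sporadic group, then $|Out(S)|$ divides $2$, while $|Out(Alt_6)|=4$; in all these cases $\pi(Out(S))$ contains no odd prime and the statement is vacuous. Hence I may assume that $S$ is a group of Lie type over $\mathbb{F}_q$ with $q=p^f$, $p$ prime.

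For a group of Lie type the order of $Out(S)$ factors as a product $d\cdot f\cdot g$ of the diagonal, field and graph contributions, and I would trace the three possible sources of odd primes separately. An odd prime dividing the diagonal part $d$ always divides one of the factors $q\mp 1$ occurring in $|S|$ (for example $d=\gcd(k,q-1)$ for $A_{k-1}(q)$), so it lies in $\pi(S)$, the first alternative. The graph part $g$ has order dividing $6$, and its only odd prime is $3$, coming from the triality of type $D_4$; since $3$ divides $|D_4(q)|$ in every characteristic, this prime too lies in $\pi(S)$. It therefore remains to bound the odd primes $r$ dividing the field degree $f$.

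So let $r$ be an odd prime with $r\mid f$, and assume $r\notin\pi(S)$; then $r\neq p$ and $r\leq f$. The idea is to exhibit a prime in $\pi(S)$ exceeding $2r$. By Zsygmondy's theorem $p^{2f}-1$ has a primitive prime divisor $s$, the only exception being $(p,2f)=(2,6)$ (and $2f=2$ forces $f=1$, which carries no odd field prime). Any such $s$ satisfies $s\equiv 1\pmod{2f}$, whence $s\geq 2f+1>2f\geq 2r$. Moreover $s\in\pi(S)$: the factor $q^2-1=p^{2f}-1$ divides $d\cdot|S|$ for every group of Lie type of rank at least one, because each such group has a fundamental invariant of degree $2$; since $s$ is odd and primitive it divides $q+1$ but not the diagonal order $d$, so it survives in $\pi(S)$. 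Hence $m\geq s>2r$, i.e.\ $r<m/2$, which is the second alternative.

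Finally the degenerate and exceptional configurations must be inspected by hand. In the Zsygmondy exception $(p,2f)=(2,6)$ one has $f=3$ and $r=3$, while $S$ lives over $\mathbb{F}_8$ and contains the primitive prime $7$ of $2^3-1$, so $m\geq 7$ and $3\leq m/2$. The Suzuki and Ree families ${}^2B_2(q)$, ${}^2G_2(q)$, ${}^2F_4(q)$, whose defining exponent is odd and where only factors of type $q^2\pm1$ appear, have to be checked from their explicit orders, but there the factor $q^2+1$ (resp.\ the analogous cyclotomic piece) again supplies a prime comfortably above $2r$. The main obstacle I anticipate is precisely this bookkeeping: one must confirm that the relevant primitive prime genuinely survives into $\pi(S)$ after division by the diagonal order $d$, and run through the finite list of very small groups and Zsygmondy exceptions where the clean inequality $s\geq 2f+1$ must be replaced by a direct numerical check.
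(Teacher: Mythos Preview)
The paper does not prove this lemma at all: it is quoted verbatim as \cite[Lemma 14]{Vac} and used as a black box, so there is no ``paper's own proof'' to compare against. Your CFSG case analysis is the natural (and essentially the only) route to such a statement, and the outline you give is sound: alternating and sporadic groups are vacuous, diagonal and graph primes already lie in $\pi(S)$, and for a field prime $r\mid f$ a Zsigmondy prime of $p^{2f}-1$ (or of a suitable higher cyclotomic factor for the Suzuki/Ree families, as you note) furnishes a prime $s\equiv 1\pmod{2f}$ in $\pi(S)$ with $s\ge 2f+1>2r$. The residual bookkeeping you flag---checking that the Zsigmondy prime is not swallowed by the diagonal order $d$, handling $(p,2f)=(2,6)$, and treating ${}^2B_2$, ${}^2G_2$, ${}^2F_4$ from their explicit orders---is exactly what the cited source does, so your sketch matches the standard argument.
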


\begin{lem}[{\rm \cite[Lemma 1.4]{GorA2}}]\label{factorKh}
Let $G$ be a finite group, $K\unlhd G$, $\overline{G}= G/K$, $x\in G$ and $\overline{x}=xK\in G/K$.
The following assertions is validity

(i) $|x^K|$ and $|\overline{x}^{\overline{G}}|$ divide $|x^G|$.

(ii) If $L$ and $M$ are neigboring members of a composition factor of $G$, $L<M$, $S=M/L$, $x\in M$  and
$\widetilde{x}=xL$ is a image of $x$, then $|\widetilde{x}^S|$ divides $|x^G|$.

(iii) If $y\in G, xy=yx$ and $(|x|,|y|)=1$, then $C_G(xy)=C_G(x)\cap C_G(y)$.

(iv) If $(|x|, |K|) = 1$, then $C_{\overline{G}}(\overline{x}) = C_G(x)K/K$.

\end{lem}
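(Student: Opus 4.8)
The plan is to treat the four assertions as standard facts about how conjugacy-class sizes behave under passage to normal subgroups, quotients, and composition factors, proving them by index bookkeeping, one telescoping argument, and one coprime-action argument. Throughout I write $|y^H|=|H:C_H(y)|$ for the size of an $H$-conjugacy class and use repeatedly that $C_H(y)=H\cap C_G(y)$ when $y$ lies in a subgroup $H\le G$. For (i), I would prove $|x^K|\mid |x^G|$ by computing the ratio directly:
$$\frac{|x^G|}{|x^K|}=\frac{|G|\,|C_K(x)|}{|C_G(x)|\,|K|}=\frac{|G|}{|C_G(x)K|}=|G:C_G(x)K|,$$
using $C_K(x)=K\cap C_G(x)$ and $|C_G(x)K|=|C_G(x)|\,|K|/|K\cap C_G(x)|$; the right-hand side is a positive integer, so $|x^K|$ divides $|x^G|$. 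For the quotient part, the image of $C_G(x)$ centralizes $\overline x$, whence $C_G(x)K/K\le C_{\overline G}(\overline x)$; therefore $|\overline x^{\overline G}|=|\overline G:C_{\overline G}(\overline x)|$ divides $|\overline G:C_G(x)K/K|=|G:C_G(x)K|$, which in turn divides $|G:C_G(x)|=|x^G|$ because $C_G(x)\le C_G(x)K$.

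For (ii) I would reduce to (i). Applying (i) inside the group $M$ with normal subgroup $L\lhd M$ gives that $|\widetilde x^S|=|\widetilde x^{M/L}|$ divides $|x^M|$, so the whole claim comes down to showing $|x^M|\mid |x^G|$. This is the main obstacle, since it is \emph{false} for a general subgroup $M\le G$; the point is that $M$ need not be normal in $G$. I resolve it using that $L,M$ are neighbouring terms of a composition series $1=G_0\lhd G_1\lhd\cdots\lhd G_m=G$, say $L=G_i$ and $M=G_{i+1}$. Since $x\in M=G_{i+1}\subseteq G_j$ for every $j\ge i+1$ and $G_j\lhd G_{j+1}$, part (i) applied to each normal inclusion $G_j\lhd G_{j+1}$ yields $|x^{G_j}|\mid |x^{G_{j+1}}|$; telescoping from $j=i+1$ to $m-1$ gives $|x^M|\mid |x^G|$, and hence $|\widetilde x^S|\mid |x^G|$. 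Thus the non-normality of $M$ is circumvented by walking up the series one normal step at a time.

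For (iii) the inclusion $C_G(x)\cap C_G(y)\le C_G(xy)$ is immediate. For the reverse, since $xy=yx$ and $(|x|,|y|)=1$ the cyclic group $\langle xy\rangle$ has order $|x|\,|y|$ and, by the Chinese remainder theorem, contains both $x$ and $y$ as suitable powers of $xy$; hence any $g$ centralizing $xy$ centralizes $\langle xy\rangle$ and in particular $x$ and $y$, giving $C_G(xy)\le C_G(x)\cap C_G(y)$.

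For (iv) the inclusion $C_G(x)K/K\le C_{\overline G}(\overline x)$ is clear. For the reverse, take $g$ with $\overline g\in C_{\overline G}(\overline x)$, i.e. $[g,x]\in K$, so $x^g\in xK$; it suffices to produce $k\in K$ with $x^g=x^k$, for then $gk^{-1}\in C_G(x)$ and $\overline g\in C_G(x)K/K$. This is where the hypothesis $(|x|,|K|)=1$ enters and is the only non-formal ingredient: inside $H=K\langle x\rangle$ both $\langle x\rangle$ and $\langle x^g\rangle$ are complements to the normal Hall subgroup $K$, so by the conjugacy part of the Schur--Zassenhaus theorem $\langle x^g\rangle=\langle x\rangle^k$ for some $k\in K$. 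Since $\langle x^g\rangle=\langle x^k\rangle$ is cyclic of order coprime to $|K|$, it maps injectively onto $\langle\overline x\rangle$, and both $x^g$ and $x^k$ lie in it and reduce to $\overline x$; therefore $x^g=x^k$, completing the proof.
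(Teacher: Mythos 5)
Your proof is correct. The paper itself offers no argument for this lemma --- it is imported verbatim from \cite[Lemma 1.4]{GorA2} --- and your four arguments (the index computation $|x^G|/|x^K|=|G:C_G(x)K|$ for (i), telescoping (i) up the composition series for (ii), the Chinese remainder trick for (iii), and the conjugacy part of Schur--Zassenhaus for (iv), which applies unconditionally here since the relevant quotient $H/K\simeq\langle x\rangle$ is cyclic and hence solvable) are the standard ones and are complete.
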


\begin{lem}[{\rm \cite[Lemma 2.14]{GorA2}}]\label{ConClass}
Let $n>26$,  $t$ is a prime, $n/2<t\leq n$, $\alpha\in N(Alt_n)$ and $t$ does not divide $\alpha$. Then
$\alpha=|Alt_n|/t|C|$ or $\alpha=|Alt_n|/|Alt_{t+i}||B|$, where $C=C_{Alt_{n-t}}(g)$ for any $g\in Alt_{n-t}$,
$t+i\leq n$ and $B=C_{Alt_{n-t-i}}(h)$ for any $h\in Alt_{n-t-i}$.
\end{lem}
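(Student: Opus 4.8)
The plan is to start from the defining relation $\alpha=|Alt_n|/|C_{Alt_n}(g)|$ for a representative $g\in Alt_n$ of the class of size $\alpha$, and to read off the cycle structure of $g$ from the condition $t\nmid\alpha$. First I would record that, since $n/2<t\le n$, we have $t\le n<2t$ and (as $n>26$) $t^2>n$, so $t$ divides $n!$ exactly once; because $t$ is odd this gives $|Alt_n|_t=t$. Writing $C_{Alt_n}(g)=C_{Sym_n}(g)\cap Alt_n$, the two centralizers have the same $t$-part, so the hypothesis $t\nmid\alpha$ is equivalent to $t\mid |C_{Sym_n}(g)|$, in fact to $|C_{Sym_n}(g)|_t=t$.

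Next I would exploit the cycle-type formula $|C_{Sym_n}(g)|=\prod_k k^{m_k}m_k!$, where $m_k$ denotes the number of $k$-cycles of $g$. A prime $t>n/2$ can enter this product in only two ways: either through a factor $k^{m_k}$ with $t\mid k$, which forces $k=t$ and, since $2t>n$, also $m_t=1$, so that $g$ has a single $t$-cycle; or through a factor $m_k!$ with $m_k\ge t$, which forces $k=1$ (otherwise $km_k\ge 2t>n$) and hence $m_1\ge t$, so that $g$ has at least $t$ fixed points. These two cases are mutually exclusive, because a $t$-cycle leaves only $n-t<t$ further points, and each contributes exactly one factor $t$, matching $|C_{Sym_n}(g)|_t=t$; this dichotomy is precisely the source of the two displayed forms for $\alpha$, and it also shows that $t\nmid\alpha$ holds in no other case.

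In the first case I would write $g$ as the product of a $t$-cycle $c$ and a permutation $h$ of the remaining $n-t$ points, so that $C_{Sym_n}(g)=\langle c\rangle\times C_{Sym_{n-t}}(h)$ has order $t\,|C_{Sym_{n-t}}(h)|$; computing the index of $C_{Alt_n}(g)$ in this group then gives $|C_{Alt_n}(g)|=t\,|C_{Alt_{n-t}}(h)|$ and hence $\alpha=|Alt_n|/(t\,|C_{Alt_{n-t}}(h)|)$, which is the first claimed form with $C=C_{Alt_{n-t}}(h)$. In the second case, letting $t+i$ be the number of fixed points and $h$ the action of $g$ on the remaining $n-t-i$ points, I would use $C_{Sym_n}(g)=Sym_{t+i}\times C_{Sym_{n-t-i}}(h)$ with $t+i\le n$, and again pass to $Alt_n$, to arrive at the second form $\alpha=|Alt_n|/(|Alt_{t+i}|\,|B|)$ with $B$ the centralizer on the $n-t-i$ moved points.

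The main obstacle is the parity bookkeeping concealed in $|C_{Alt_n}(g)|=|C_{Sym_n}(g)\cap Alt_n|$, whose index in $C_{Sym_n}(g)$ is $1$ or $2$ according as $C_{Sym_n}(g)\subseteq Alt_n$ or not, together with the parallel question of whether the $Sym_n$-class of $g$ splits in $Alt_n$. In the first case this is benign: the $t$-cycle is an even permutation, so $C_{Sym_n}(g)\subseteq Alt_n$ exactly when $C_{Sym_{n-t}}(h)\subseteq Alt_{n-t}$, and the factor of two transfers cleanly from $C_{Sym_{n-t}}(h)$ to $C_{Alt_{n-t}}(h)$, making the first form exact. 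The second case is the delicate one, since $Sym_{t+i}$ with $t+i\ge t\ge 2$ always contains odd permutations, whence $C_{Sym_n}(g)\not\subseteq Alt_n$ and the factor of two must be charged against the centralizer on the moved points; tracking this carefully is exactly what pins down the constant in the second displayed form, and is the step I expect to demand the most attention.
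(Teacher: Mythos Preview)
The paper does not prove this lemma: it is quoted verbatim from \cite[Lemma~2.14]{GorA2} and used as a black box, so there is no in-paper argument to compare against. Your proposal is therefore an independent proof, and the strategy you outline---reducing $t\nmid\alpha$ to $t\mid |C_{Sym_n}(g)|$ via $|Alt_n|_t=t$, then reading the cycle-type formula $\prod_k k^{m_k}m_k!$ to force either a single $t$-cycle or at least $t$ fixed points---is exactly the natural one and is correct.

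Your dichotomy and the treatment of the first case are clean; the observation that the $t$-cycle is even, so the index-$2$ bookkeeping transfers verbatim from $Sym_{n-t}$ to $Alt_{n-t}$, is precisely what makes that case painless. You are also right that the second case is where the factor of $2$ must be handled with care: since $Sym_{t+i}$ always contributes odd permutations, one gets $|C_{Alt_n}(g)|=|Alt_{t+i}|\cdot |C_{Sym_{n-t-i}}(h)|$, and matching $|C_{Sym_{n-t-i}}(h)|$ with some $|C_{Alt_{n-t-i}}(h')|$ genuinely depends on whether the centralizer of $h$ in $Sym_{n-t-i}$ already lies in $Alt_{n-t-i}$. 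This is the only place your sketch is not yet a proof, and you have flagged it accurately. (For what it is worth, the sole application of the lemma in the present paper, in Lemma~\ref{alpha}, uses only the first alternative, so the delicate half of case two is never invoked here.)
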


\begin{lem}[{\rm \cite{Hanson}}]\label{Hanson}
The product of $k$ consecutive integers $n(n-1)...(n+k-1)$, which are greater than $k$, contains a prime divisor greater
than $3k/2$ with the exceptions $3\cdot4, 8\cdot9$ and $6\cdot7\cdot8\cdot9\cdot10$.
\end{lem}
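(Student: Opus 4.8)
The plan is to reduce the assertion to a statement about a single binomial coefficient and then run an Erd\H{o}s-type extremal argument. Write the product of $k$ consecutive integers, all exceeding $k$, as $m(m+1)\cdots(m+k-1)$ with $m>k$, and set $N=m+k-1$, so that $N\ge 2k$. Since $m(m+1)\cdots(m+k-1)=\binom{N}{k}\,k!$ and every prime dividing $k!$ is at most $k<\tfrac32 k$, a prime exceeding $\tfrac32 k$ divides the product if and only if it divides $\binom{N}{k}$. Thus it suffices to prove that for $N\ge 2k$ the binomial coefficient $\binom{N}{k}$ has a prime divisor greater than $\tfrac32 k$, apart from the listed small configurations. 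I would argue by contradiction, assuming that every prime factor of $\binom{N}{k}$ is at most $\tfrac32 k$.

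Two elementary tools drive the estimate. First, by Legendre's (Kummer's) formula the exponent $a_p$ of a prime $p$ in $\binom{N}{k}$ equals $\sum_{i\ge 1}\bigl(\lfloor N/p^i\rfloor-\lfloor k/p^i\rfloor-\lfloor (N-k)/p^i\rfloor\bigr)$, each summand being $0$ or $1$; consequently $p^{a_p}\le N$ for every $p$, and $a_p\le 1$ once $p>\sqrt N$. Moreover the same formula forces $a_p=0$ for primes lying in certain subintervals (for instance $\tfrac{2k}{3}<p\le k$ when $N$ is near $2k$), a fact that will be essential. Second, I would invoke the sharp primorial estimate $\prod_{p\le x}p<3^{x}$ (Hanson's companion inequality). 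Splitting $\binom{N}{k}=\prod_{p\le\sqrt N}p^{a_p}\cdot\prod_{\sqrt N<p\le 3k/2}p^{a_p}$, the first factor is at most $N^{\pi(\sqrt N)}=\exp\!\bigl(O(\sqrt N)\bigr)$, while the second is squarefree and is bounded by a suitable primorial.

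To reach a contradiction I would play this upper bound against the lower bound $\binom{N}{k}\ge (N/k)^k$, which near the critical value $N=2k$ sharpens to $\binom{2k}{k}\ge 4^{k}/(2k+1)$. The main obstacle is precisely here: a crude application of $\prod_{p\le 3k/2}p<3^{3k/2}$ yields the base $3^{3/2}\approx5.2$, which exceeds $4$ and therefore fails to contradict the lower bound. The saving must come from discarding the primes with $a_p=0$ --- in particular those in intervals such as $(\tfrac{2k}{3},k]$ --- so that the exponent appearing in the primorial bound is pushed strictly below $k\log 4$; it is exactly this range bookkeeping, together with monotonicity in $N$ (increasing $N$ makes $\binom{N}{k}$ grow faster than the small-prime part $N^{\pi(\sqrt N)}$), that pins the admissible constant at $\tfrac32$. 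I expect that verifying these interval estimates uniformly in $N\ge 2k$, and tracking the error factor $\exp\!\bigl(O(\sqrt N)\bigr)$ against $(4/3)^{\,k}$, will be the technically delicate part, valid for all sufficiently large $k$.

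Finally, the inequality chain only closes once $k$ exceeds an explicit bound, so I would dispose of the finitely many remaining small values of $k$ by direct inspection of $\binom{N}{k}$ for $N\ge 2k$. This finite check is what produces the three exceptional products $3\cdot4$, $8\cdot9$ and $6\cdot7\cdot8\cdot9\cdot10$, in each of which the largest prime factor ($3$, $3$ and $7$ respectively) fails to exceed $\tfrac32 k$ by the slimmest margin; this both confirms that the exceptions are genuine and shows that the strict bound $\tfrac32 k$ cannot be improved in the statement.
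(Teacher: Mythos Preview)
The paper does not supply a proof of this lemma at all: it is quoted verbatim from Hanson's article \cite{Hanson} and used as a black box in the argument of Section~6. So there is no ``paper's own proof'' to compare against; the relevant comparison is with Hanson's original argument.

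Your outline is, in fact, a faithful sketch of that argument. Hanson (building on Erd\H{o}s's proof of the Sylvester--Schur theorem) works with $\binom{N}{k}$ for $N\ge 2k$, uses Kummer's carry description to get $p^{a_p}\le N$ and to identify the ``dead'' ranges such as $(\tfrac{2k}{3},k]$ where $a_p=0$, and then plays the lower bound $\binom{2k}{k}\ge 4^k/(2k+1)$ against a primorial upper bound for the surviving small primes. The sharpening to $\tfrac{3}{2}k$ (as opposed to Erd\H{o}s's $\tfrac{7}{5}k$) is exactly what his inequality $\prod_{p\le x}p<3^x$ buys, together with the interval bookkeeping you describe; your diagnosis that the na\"\i ve bound $3^{3k/2}$ is too weak and that the exclusion of primes in $(\tfrac{2k}{3},k]$ is what rescues it is precisely the crux of Hanson's computation. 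The residual finite check then isolates the three exceptions. So your proposal is correct in spirit and matches the cited source, but for the purposes of the present paper no argument is needed beyond the citation.
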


\begin{lem}{\rm \cite[Lemma 3.6]{VasBig}}\label{vas2}
Let $s$ and $p$ be distinct primes, $H$ be a semidirect
product of a normal $p$-subgroup $T$ and a cyclic subgroup $\langle g\rangle$ of order $s$,
and $[T, g]\neq 1$. Suppose that $H$ acts faithfully on a vector space $V$ over a field of
positive characteristic $t$ not equal to $p$. If the minimal polynomial of $g$ on $V$
does not equal $x^s-1$, then

$($i$)$ $C_T(g)\neq 1$;

$($ii$)$ $T$ is non-Abelian;

$($iii$)$ $p=2$ and $s = 2^{2^{\delta}}+1$ is a Fermat prime.
\end{lem}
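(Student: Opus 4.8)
The plan is to work over an algebraic closure $\overline{F}$ of the ground field (the minimal polynomial of $g$ is unchanged under field extension), to note that $|T|$ is invertible in $\overline{F}$ since $t\neq p$, and to exploit the single elementary observation that drives everything: if $g$ cyclically permutes $s$ nonzero, linearly independent subspaces $W_1,\dots,W_s$ (for instance $s$ distinct homogeneous components of $V|_T$ forming one $\langle g\rangle$-orbit), then for $0\neq v\in W_1$ the vectors $v,gv,\dots,g^{s-1}v$ are independent and $g$ acts on their span as the companion matrix of $x^s-1$; hence the minimal polynomial of $g$ on $V$ is exactly $x^s-1$. Decomposing $V|_T$ into its homogeneous (Clifford) components and letting $\langle g\rangle$ permute them, the hypothesis that the minimal polynomial is \emph{not} $x^s-1$ forces every $g$-orbit to be a singleton, so $g$ fixes each component and each isomorphism class $[U_i]$ of the irreducible $T$-constituents $U_i$.

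From this, (i) and (ii) fall out quickly. Each $U_i$ carries a central character $\mu_i\colon Z(T)\to\overline{F}^{\times}$, and faithfulness of $T$ on $V$ gives $\bigcap_i\ker\mu_i=1$ inside $Z(T)$. Since $U_i^{\,g}\cong U_i$, the class-invariant $\mu_i$ is $g$-fixed, so $[Z(T),g]\subseteq\ker\mu_i$ for every $i$, whence $[Z(T),g]=1$ and $Z(T)\le C_T(g)$. If $T$ were abelian this would read $[T,g]=1$, contradicting the hypothesis, which proves (ii); and then $C_T(g)\supseteq Z(T)\neq1$ proves (i). (Alternatively the abelian case is ruled out directly by Lemma~\ref{Gore}: writing $T=C_T(g)\times[T,g]$, the nontrivial summand $[T,g]$ supplies a $g$-orbit of nontrivial characters, forcing minimal polynomial $x^s-1$.)

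For (iii) I would reduce to an extraspecial fixed-point-free situation and run a Gauss-sum estimate. Since the minimal polynomial is still not $x^s-1$ after restriction to each fixed component, some constituent $U=U_i$ misses an $s$-th root of unity among the eigenvalues of $g$; passing to $\overline{T}=T/K_i$, faithful and irreducible on $U$, one uses the structure of $p$-groups with a faithful irreducible representation (symplectic type: cyclic center, $\overline T/Z(\overline T)$ a symplectic $\mathbb{F}_p$-space) to split the $g$-invariant decomposition $\overline T/Z(\overline T)=(\text{$g$-fixed part})\perp(\text{moving part})$ and extract a $g$-invariant extraspecial section $E$ of order $p^{1+2m}$ on which $g$ is trivial on $Z(E)$ (as established above) and fixed-point-free on $E/Z(E)\cong\mathbb{F}_p^{2m}$, with the missing eigenvalue persisting on the faithful irreducible $E$-module $U_0$ of dimension $p^{m}$ (the eigenvalues on $U_0$ differ from those on $U$ only by a common scalar coming from the $g$-centralized tensor factor). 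On $U_0$ the element $g$ acts by a Weil operator, and since every $g^{j}$ with $1\le j\le s-1$ is again fixed-point-free on $E/Z(E)$, the character values satisfy $|\operatorname{tr}(g^{j}\mid U_0)|=1$. Writing the multiplicity of the eigenvalue $\omega^{k}$ as $m_k=\tfrac1s\bigl(p^{m}+\sum_{j=1}^{s-1}\operatorname{tr}(g^{j}\mid U_0)\,\omega^{-jk}\bigr)$, a vanishing multiplicity $m_k=0$ gives $p^{m}=-\sum_{j=1}^{s-1}\operatorname{tr}(g^{j}\mid U_0)\,\omega^{-jk}$, whose right-hand side has absolute value at most $s-1$; thus $p^{m}\le s-1$. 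Combining this with $s\mid p^{2m}-1=(p^{m}-1)(p^{m}+1)$ and $0<p^{m}-1<s$ forces $s\mid p^{m}+1$ and hence $s=p^{m}+1$. Primality of $s$ then forces $p=2$ (otherwise $p^{m}+1$ is even and exceeds $2$), and $2^{m}+1$ prime forces $m=2^{\delta}$, i.e. $s=2^{2^{\delta}}+1$, which is (iii).

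The hard part will be the reduction to the extraspecial fixed-point-free section together with the Weil-character input $|\operatorname{tr}(g^{j}\mid U_0)|=1$: one must verify that the center is cyclic, that the moving part can be isolated as an extraspecial section while splitting off the $g$-fixed part without destroying the missing eigenvalue, and that the Gauss-sum magnitude of the Weil character specializes correctly in characteristic $t$ (via a Brauer lift). The degenerate case $t=s$, where $x^{s}-1=(x-1)^{s}$ and $g$ acts unipotently so that Fourier inversion over $\mathbb{Z}/s$ collapses, should be handled separately by a direct Jordan-block count showing that any moving direction already produces a full block and hence the minimal polynomial $(x-1)^s=x^s-1$.
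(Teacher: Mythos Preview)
The paper does not prove this lemma at all: it is quoted from \cite[Lemma~3.6]{VasBig} and used as a black box, so there is no in-paper argument to compare against.

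For what it is worth, your outline is the classical Hall--Higman route and is essentially how such results are established in the literature (and, as far as one can tell, in \cite{VasBig} itself). Parts (i) and (ii) are handled cleanly by your Clifford/central-character argument: a nontrivial $\langle g\rangle$-orbit on homogeneous components would already force minimal polynomial $x^s-1$, so $g$ fixes every isomorphism type, whence $[Z(T),g]=1$, giving both conclusions. For (iii) your plan---pass to a $g$-invariant extraspecial section $E$ of order $p^{1+2m}$ with $g$ fixed-point-free on $E/Z(E)$, use the Weil-character magnitude $|\operatorname{tr}(g^j\mid U_0)|=1$ to bound $p^m\le s-1$ via Fourier inversion of the eigenvalue multiplicities, and combine with $s\mid p^{2m}-1$ to force $s=p^m+1$---is correct and standard. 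You are right that the serious work lies in the reduction step (isolating the extraspecial section while preserving a missing eigenvalue) and in the characteristic-$t$ justification of the trace formula; both are handled in the source via symplectic-type structure theory and a Brauer lift, and the degenerate case $t=s$ is indeed treated separately by a Jordan-block count, just as you indicate.
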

\begin{lem}\label{AnSn}
$N(Sym_n)\neq N(Alt_n)$
\end{lem}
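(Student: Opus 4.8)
The plan is to exhibit a single conjugacy class size that lies in $N(Sym_n)$ but cannot occur in $N(Alt_n)$; I will assume throughout that $n$ is large (say $n\ge 6$), which is harmless since every application of this lemma in the paper has $n$ large, and the finitely many smaller cases can be checked by hand. The natural candidate is the class of a transposition: a transposition $\tau\in Sym_n$ has centralizer $\langle\tau\rangle\times Sym_{n-2}$, so $|\tau^{Sym_n}|=\binom{n}{2}=\frac{n(n-1)}{2}\in N(Sym_n)$. I would then show that $\frac{n(n-1)}{2}\notin N(Alt_n)$ by arguing that it is strictly smaller than the least nontrivial element of $N(Alt_n)$. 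Since $\frac{n(n-1)}{2}>1$ for $n\ge 3$, this immediately yields $N(Sym_n)\ne N(Alt_n)$.

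First I would translate the target into a statement about centralizers. If $\frac{n(n-1)}{2}\in N(Alt_n)$, say $|g^{Alt_n}|=\frac{n(n-1)}{2}$ for some $g\in Alt_n$, then $|C_{Alt_n}(g)|=\frac{|Alt_n|}{n(n-1)/2}=(n-2)!$; since $C_{Alt_n}(g)=C_{Sym_n}(g)\cap Alt_n$ has index $1$ or $2$ in $C_{Sym_n}(g)$, this forces $|C_{Sym_n}(g)|\in\{(n-2)!,\,2(n-2)!\}$. Writing the cycle type of $g$ as $a_i$ cycles of length $i$ (so $\sum_i i\,a_i=n$), we have $|C_{Sym_n}(g)|=\prod_i i^{a_i}a_i!$, so it suffices to rule out that this product equals $(n-2)!$ or $2(n-2)!$ for any nontrivial even permutation $g$.

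The key estimate, and the main obstacle, is a bound on centralizers of even permutations: for $n$ large, every nontrivial $g\in Alt_n$ satisfies $|C_{Sym_n}(g)|\le 3(n-3)!$, with equality exactly for a $3$-cycle. Granting this, $|C_{Sym_n}(g)|\le 3(n-3)!<(n-2)!$ whenever $n\ge 6$, contradicting $|C_{Sym_n}(g)|\in\{(n-2)!,2(n-2)!\}$ and finishing the proof. To prove the bound I would let $m=n-a_1$ be the number of points moved by $g$ (a nontrivial even permutation moves at least three points, so $m\ge 3$) and factor $|C_{Sym_n}(g)|=(n-m)!\cdot c$, where $c=\prod_{i\ge 2}i^{a_i}a_i!$ is the centralizer order of the fixed-point-free part of $g$ on its $m$-point support. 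For $m=3$ the support carries a $3$-cycle, $c=3$, and $|C_{Sym_n}(g)|=3(n-3)!$. For $m\ge 4$ one bounds $c$ above by the maximal centralizer order of a fixed-point-free permutation of $m$ points, which grows only like $2^{\lfloor m/2\rfloor}\lfloor m/2\rfloor!$, and then checks $(n-m)!\cdot 2^{\lfloor m/2\rfloor}\lfloor m/2\rfloor!<3(n-3)!$ for all $4\le m\le n$ with $n$ large. This inequality is transparent when $m$ is small relative to $n$ (the right-hand side contains a product of $m-3$ factors each close to $n$) and when $m$ is close to $n$ (then $(n-m)!$ is tiny while $c$ stays subfactorial in $m$); the uniform verification across the entire range $4\le m\le n$ is the only genuinely technical point, and it is where I expect the bulk of the work to lie. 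I note that split classes in $Alt_n$ cause no trouble here, since an element with a split class has distinct odd cycle lengths and hence centralizer equal to the (polynomially small) product of its cycle lengths, far below $(n-2)!$.
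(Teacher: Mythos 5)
The paper offers no argument here --- its proof of Lemma \ref{AnSn} is literally ``The proof is obvious'' --- so there is no official approach to compare against; your write-up is a correct rigorous instantiation of the standard reason the lemma holds. Exhibiting $\binom{n}{2}\in N(Sym_n)$ (the transposition class) and showing it is below the least nontrivial class size of $Alt_n$ is exactly the right move, and your reduction to the bound $|C_{Sym_n}(g)|<(n-2)!$ for all $1\neq g\in Alt_n$ is sound. Two small remarks. First, the intermediate claim $|C_{Sym_n}(g)|\le 3(n-3)!$ with equality only for a $3$-cycle is not quite true for all $n\ge 6$: for $n=8$ the fixed-point-free involution $(12)(34)(56)(78)$ is even and has centralizer of order $2^4\cdot 4!=384>3\cdot 5!=360$; the final inequality $384<(n-2)!=720$ still holds, and for the paper's range $n\ge 27$ your crude bound $(n-m)!\cdot 2^{\lfloor m/2\rfloor}\lfloor m/2\rfloor!<3(n-3)!$ is comfortably uniform over $4\le m\le n$ (the maximum over $m$ sits at $m\in\{4,5\}$ or $m=n$, both easily checked), so nothing breaks --- but the statement ``with equality exactly for a $3$-cycle'' should be qualified. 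Second, the lemma as literally stated fails for $n\le 2$ (both sets are $\{1\}$); this is irrelevant to the paper, which only uses $n\ge 27$, but since you invoke ``the finitely many smaller cases can be checked by hand'' it is worth noting that the check actually fails there and the lemma should be read with $n\ge 3$. The remark about split classes is, as you observe, not needed, since you bound $|C_{Sym_n}(g)|$ rather than $|C_{Alt_n}(g)|$.
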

\begin{proof}
The proof is obvious.
\end{proof}

\begin{lem}[{\rm \cite{GorA2}}]\label{GorT}
Let $G$ be a finite group such that $N(G)=N(Alt_n)$, where $n>1361$. Then $G$ has a composition factor isomorphic to an
alternating group $Alt_m$ with $m\leq n$ and the half-interval $(m, n]$ contains no primes.
\end{lem}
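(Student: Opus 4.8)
The plan is to trap the largest prime below $n$ inside a non-abelian composition factor, to recognise that factor as alternating by the classification of finite simple groups, and finally to bound its degree by $n$; the prime-gap condition then comes for free. First I would record the arithmetic of $N(G)=N(Alt_n)$: a prime divides some member of $N(Alt_n)$ exactly when it has a non-central Sylow subgroup, so $\bigcup_{a\in N(G)}\pi(a)=\{q\ \text{prime}:q\le n\}$. In particular every prime $\le n$ divides $|G|$, every prime occurring in a non-abelian composition factor is $\le n$ (a central Sylow subgroup would split off as a direct factor and could not meet a non-abelian factor), and by \cite{GorA} the group $G$ is non-solvable. Fix the largest prime $p\le n$. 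By Bertrand's postulate $n/2<p\le n$, so $|Alt_n|_p=p$, no element of $N(G)$ is divisible by $p^2$, and Lemma~\ref{vas} shows that a Sylow $p$-subgroup of $G$ is elementary abelian.

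Next I would force $p$ into a non-abelian composition factor. A Sylow $p$-subgroup is non-central, so there is a $p$-element $g$ with $|g^G|>1$; by Lemmas~\ref{kant2} and~\ref{gfactor}, $g$ acts nontrivially on some chief factor $R/K$ of $G$. If every composition factor whose order is divisible by $p$ were abelian, this $R/K$ would be an elementary abelian $\ell$-group with $\ell\ne p$; Lemma~\ref{Gore} then splits $R/K=C_{R/K}(g)\times[R/K,g]$ and Lemma~\ref{vas2} pins the faithful action of $\langle g\rangle$ down to a Fermat-prime configuration, which together with the rigid description of the $p$-free class sizes of $Alt_n$ in Lemma~\ref{ConClass} produces a class size of $G$ outside $N(Alt_n)$, a contradiction once $n>1361$. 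Hence some non-abelian composition factor $S$ has $p\in\pi(S)$, and as the Sylow $p$-subgroup is elementary abelian with $p>n/2$ we get $|S|_p=p$.

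I would then identify $S$. Since $n>1361$ we have $p>680$, while the largest prime dividing the order of any sporadic group is $71$, so $S$ is not sporadic. For $S$ of Lie type, Lemma~\ref{Pord} supplies an element $g$ of prime power order with $|\pi(g)|=1$ realising $|S|_p=p$, and Lemma~\ref{factorKh}(ii) forces $|g^S|$ to divide some $\alpha\in N(Alt_n)$; but the $p'$-part of a group of Lie type is dominated by a high power of its defining characteristic and, via Lemma~\ref{Hanson}, also carries additional large primitive prime divisors, so $|g^S|$ cannot be fitted into either admissible shape $|Alt_n|/(t|C|)$ or $|Alt_n|/(|Alt_{t+i}||B|)$ of Lemma~\ref{ConClass} with $t=p$. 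This leaves $S\simeq Alt_m$, and $p\in\pi(S)$ gives $m\ge p$. I expect this Lie-type exclusion to be the main obstacle, since it rests on a delicate arithmetic comparison between the class sizes produced by Lemma~\ref{Pord} and the two rigid forms of Lemma~\ref{ConClass}.

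Finally I would bound $m$. If $m>n$, then $Alt_m$ already contains an element (an $m$-cycle, or a cycle of length $m$ or $m-1$ of the correct parity) whose class has size at least $n!/2=|Alt_n|$, which strictly exceeds every member of $N(Alt_n)$; by Lemma~\ref{factorKh}(ii) this size would have to divide a member of $N(Alt_n)$, which is impossible. Hence $m\le n$. Because $p$ is the largest prime $\le n$ and $m\ge p$, any prime in $(m,n]$ would be a prime in $(p,n]$ and so cannot exist; thus $(m,n]$ contains no primes, and $S\simeq Alt_m$ is the required composition factor.
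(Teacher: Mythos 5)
The paper does not prove this lemma at all: it is quoted verbatim from \cite{GorA2}, and its proof (the main theorem of that paper) is precisely the heavy lifting that the present paper takes as input. So there is no in-paper argument to compare against; judged on its own, your sketch has the right top-level architecture (trap the largest prime $p\le n$ in a non-abelian composition factor, identify that factor via CFSG, bound its degree by $n$, and read off the prime-gap condition from $p\le m\le n$), and the first and last steps — $\pi(G)\supseteq\{q\le n\}$, Bertrand plus Lemma~\ref{vas}, the exclusion of $m>n$ via an $m$- or $(m-1)$-cycle whose class in $Alt_m$ is too large to divide anything in $N(Alt_n)$, and the deduction that $(m,n]$ is prime-free — are sound.

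But the two steps that constitute the actual content of the cited theorem are gaps, not proofs. First, the reduction to a non-abelian factor: Lemma~\ref{gfactor} applies only to a coprime semidirect product $T\leftthreetimes\langle g\rangle$, which you have not constructed, and Lemma~\ref{vas2} needs a non-abelian $\{p,s\}$-configuration $T\leftthreetimes\langle g\rangle$ with $[T,g]\neq1$ acting on a module — a single cyclic $\langle g\rangle$ acting on a chief factor does not meet its hypotheses, so the ``Fermat-prime configuration'' contradiction is not actually derived. (Also, ``elementary abelian Sylow $p$-subgroup'' does not by itself give $|S|_p=p$; you need Lemma~\ref{Pord} plus Lemma~\ref{factorKh}(ii) against $|Alt_n|_p=p$ for that.) Second, and more seriously, the exclusion of Lie-type factors is asserted rather than argued: a single class size from Lemma~\ref{Pord} compared against the two shapes in Lemma~\ref{ConClass} will not eliminate, say, $PSL_2(r)$ for a suitable prime $r$, whose class sizes are arithmetically unconstrained enough to divide members of $N(Alt_n)$. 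The actual mechanism in \cite{GorA2} exploits that \emph{all} the (many) primes in $(n/2,n]$ must divide $|S|$ to the first power, which is incompatible with the torus structure of a group of Lie type; this is also where the hypothesis $n>1361$ genuinely enters, whereas in your sketch it is used only to rule out sporadic groups, which already signals that the decisive counting argument is missing.
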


\section{Proof of Theorem}

Let $26<n\in \mathbb{N}$, $n,n-1$ and $n-2$ are not prime, at least one of numbers $n$ or $n-1$ are decomposed into a
sum of two primes, $G$ is a finite group with $Z(G)=1$, $N(G)=N(Alt_n)$ and $G$ has a composition factor
$S\simeq Alt_{n-\varepsilon}$ where  is a non-negative integer such that the set $\{n-\varepsilon,...,n\}$
does not contain primes. Let $K$ be a maximal normal subgroup of $G$ such that $G/K$ contains a subgroup which is
isomorphic to $S$. Set $\Omega=\{t\mid t$ is prime, $n/2<t\leq n\}$. It is clear that $S\leq G/K\leq Aut(S)$.

\begin{lem}\label{Order}
$|Alt_n|$ divides $|G|$.
\end{lem}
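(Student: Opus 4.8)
The plan is to prove the divisibility prime by prime: since $|Alt_n|=\prod_{p}|Alt_n|_p$ and the prime powers $|Alt_n|_p$ are pairwise coprime, it suffices to show that $|Alt_n|_p$ divides $|G|$ for every prime $p\in\pi(Alt_n)$. Thus I reduce the statement to controlling, for each $p$, the full $p$-part of $|Alt_n|$ inside $|G|$.

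The key tool is Lemma \ref{Pord} applied to the simple group $Alt_n$ itself (recall $n>26$, so $Alt_n$ is non-abelian simple). Fixing $p\in\pi(Alt_n)$, Lemma \ref{Pord} produces a conjugacy class size $a\in N(Alt_n)$ whose $p$-part is extremal, namely $|a|_p=|Alt_n|_p$ (it is realized by an element of prime-power order, but only the value $a$ matters for us). Since $N(G)=N(Alt_n)$ by hypothesis, the number $a$ also lies in $N(G)$, so there is an element $x\in G$ with $|x^G|=a$. As $|x^G|=|G:C_G(x)|$ divides $|G|$, we conclude that $|Alt_n|_p=|a|_p$ divides $a=|x^G|$, and hence divides $|G|$; in particular $p\in\pi(G)$, so the inclusion $\pi(Alt_n)\subseteq\pi(G)$ falls out of the same argument.

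Running this over all $p\in\pi(Alt_n)$ yields $|Alt_n|_p\mid|G|_p$ for every prime, and therefore $|Alt_n|\mid|G|$. I do not expect a genuine obstacle here: the whole argument is a direct transfer of the extremal class sizes of $Alt_n$ into $G$ through the equality $N(G)=N(Alt_n)$, and it uses neither the composition factor $S\simeq Alt_{n-\varepsilon}$ nor the structure of $K$ (those enter only in the later lemmas). The sole point deserving a line of care is the reduction in the first paragraph, that divisibility of each $p$-part is equivalent to divisibility of the full order, which is immediate from the coprimality of the factors $|Alt_n|_p$.
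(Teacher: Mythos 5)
Your argument is correct, but it is not the route the paper takes. You work prime by prime: for each $p\in\pi(Alt_n)$ you invoke Lemma \ref{Pord} for the simple group $Alt_n$ to produce a class size $a\in N(Alt_n)=N(G)$ with $|a|_p=|Alt_n|_p$, and since every element of $N(G)$ is an index $|G:C_G(x)|$ and hence divides $|G|$, you get $|Alt_n|_p\mid |G|$ for all $p$ and conclude by coprimality of the $p$-parts. The paper instead exhibits just two explicit class sizes (for $n$ odd, $\alpha=n!/2n$ and $\gamma=n!/4(n-1)$; the analogous pair for $n$ even), notes both lie in $N(G)$ and divide $|G|$, and uses the coprimality of $n$ and $n-1$ so that the least common multiple of $\alpha$ and $\gamma$ is already $n!/2=|Alt_n|$. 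The trade-off: the paper's proof is a one-liner that only requires knowing two centralizer orders in $Alt_n$, but it does require a small parity case split and a check that the stated values really are class sizes of $Alt_n$; your proof avoids exhibiting any explicit elements and is uniform in $p$, at the cost of leaning on the imported Lemma \ref{Pord} (stated for arbitrary non-abelian simple groups), and it yields $\pi(Alt_n)\subseteq\pi(G)$ as an explicit by-product. Both proofs are sound and use only the hypothesis $N(G)=N(Alt_n)$, not the composition factor $S$ or the subgroup $K$.
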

\begin{proof}
If $n$ is odd then there exist $\alpha,\gamma\in N(G)$ such that $\alpha=n!/2n$, $\gamma=n!/4(n-1)$. If $n$ is even then
there exist $\alpha,\gamma\in N(G)$ such that $\alpha=n!/4n$, $\gamma=n!/2(n-1)$. The statement follows from Lemma
\ref{factorKh}.
\end{proof}

\begin{lem}\label{Omega}
$\Omega\cap\pi(K)=\varnothing$.
\end{lem}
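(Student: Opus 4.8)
The plan is to argue by contradiction: assume some prime $t\in\Omega$ divides $|K|$ and manufacture a conjugacy class of $G$ whose size is divisible by $t^2$, which is impossible because no member of $N(G)=N(Alt_n)$ is divisible by $t^2$. First I would record the arithmetic of $t$. Since $n/2<t\le n$, only one multiple of $t$ lies in $\{1,\dots,n\}$, so $|Alt_n|_t=t$; hence $t^2$ divides no element of $N(Alt_n)=N(G)$, and Lemma~\ref{vas} shows that a Sylow $t$-subgroup $P$ of $G$ is elementary abelian. Next, because the interval $\{n-\varepsilon,\dots,n\}$ contains no primes and $t\le n$ is prime, we get $t\le n-\varepsilon-1<n-\varepsilon=:m$, so $t\in\pi(S)$; moreover $t>n/2\ge m/2$ gives $|S|_t=t$, and $t\nmid|Out(S)|$ (Lemma~\ref{vac2}), so $|G/K|_t=t$ as $S\le G/K\le Aut(S)$. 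Consequently $|G|_t=|K|_t\cdot t$, and the assumption $t\mid|K|$ forces $|G|_t\ge t^2$ with $P$ elementary abelian of rank at least $2$.

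The core step is to exhibit the forbidden class. Since $t\mid|K|$, there is a $G$-chief factor $V=M/L$ with $L<M\le K$ that is an elementary abelian $t$-group, and $\overline G=G/C_G(V)$ acts faithfully and irreducibly on $V$. Working in $G/L$, for any element $s$ of order prime to $t$ the conjugation orbit $\{s^v\mid v\in V\}$ has length $|V:C_V(s)|=t^{\dim_{\mathbb F_t}[V,s]}$; as $V$ is normal in $G/L$, these are $\overline G$-conjugate, so $t^{\dim[V,s]}$ divides $|\overline s^{\overline G}|$ and hence, by Lemma~\ref{factorKh}(i), divides $|s^G|$. Thus it suffices to produce a $t'$-element $s$ with $\dim_{\mathbb F_t}[V,s]\ge2$. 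If the composition factor $S\simeq Alt_m$ acts nontrivially on $V$, then $Alt_m$ is a section of $GL_{\dim V}(t)$, forcing $\dim V\ge m-2$; since a large alternating group is not generated by pseudoreflections in any faithful module, some element $s$ of order prime to $t$ (a $3$-cycle or a double transposition; note $t\ne2,3$) satisfies $\dim[V,s]\ge2$. This yields $t^2\mid|s^G|$, the desired contradiction.

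The main obstacle is the complementary case, in which $S$ centralizes every $t$-chief factor of $K$ (in particular all such factors may be one-dimensional, where the orbit argument only produces $t^1$). Here I would first use Lemma~\ref{Pord} to pick a $q$-element $g\in S$ with $q\ne t$ and $|g^S|_t=|S|_t=t$; Lemma~\ref{factorKh}(ii) then gives $t\mid|x^G|$ for a preimage $x$, whence $|x^G|_t=t$ and $|C_G(x)|_t=|G|_t/t\ge t$. Using Lemma~\ref{factorKh}(iv) together with the fact that $C_S(g)$ and $Out(S)$ have trivial $t$-part, the image in $G/K$ of a Sylow $t$-subgroup of $C_G(x)$ is trivial, so $x$ centralizes a full Sylow $t$-subgroup of $K$. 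I would then apply Lemma~\ref{gfactor} to $\langle x\rangle$ acting on $K$ to locate a minimal normal section on which $x$ acts nontrivially: if that section is a $t$-group it contradicts the hypothesis of this case, and if it is a $t'$-group I would feed the induced configuration into Lemma~\ref{vas2}, whose conclusions (a non-abelian Sylow, or a Fermat-prime constraint) clash with $P$ being elementary abelian; combined with $Z(G)=1$ this eliminates the crux case. Closing this last case cleanly, i.e.\ showing that the large alternating factor cannot absorb the $t$-part into $K$, is where the delicate work lies.
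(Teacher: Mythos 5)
Your reduction to producing a class size divisible by $t^2$ is the right target, and the orbit computation in your core step (for a $t'$-element $s$ and an abelian $t$-section $V\unlhd G/L$, the $t$-part $|V:C_V(s)|=|[V,s]|$ divides $|s^G|$) is sound. But there is a genuine gap at the very first move of that step: from $t\mid|K|$ you cannot conclude that some $G$-chief factor inside $K$ is an elementary abelian $t$-group. A chief factor of order divisible by $t$ may be a direct product of non-abelian simple groups, and then your whole linear-algebra framework ($\dim_{\mathbb{F}_t}V$, $[V,s]$, pseudoreflections) does not apply. The paper's proof spends most of its length on precisely this non-solvable situation: when the relevant minimal normal section is a product of $k>1$ simple groups it applies Lemma \ref{Pord} in two distinct components to force $t^2$ into a class size; when $k=1$ it uses solvability of $Out$ of a simple group to show $\widetilde{S}$ centralizes the section; and in the final non-solvable case it runs the ``$leng$'' argument on the orbits of the components under an element whose orbit lengths are divisible by $t$. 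None of this is present in, or replaceable by, your sketch. Even within the abelian case, your claim that some $3$-cycle or double transposition satisfies $\dim[V,s]\ge 2$ rests on an unproved assertion about pseudoreflection subgroups of $GL_d(\mathbb{F}_t)$, together with an unaddressed lifting problem: the $3$-cycle lives in the section $S$ of $G/K$, while you need an actual $t'$-element of $G/L$ whose induced action on $V$ has two-dimensional commutator.

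The second gap is the complementary case, which you yourself flag as open; the route you sketch does not close it. From ``$S$ centralizes every $t$-chief factor of $K$'' you cannot infer that your chosen preimage $x$ acts trivially on a given $t$-chief factor $V$: at best $x\in C_G(V)K$, so its action on $V$ agrees with that of an element of $K$, which may be nontrivial; hence ``a $t$-section would contradict the hypothesis of this case'' fails. Moreover Lemma \ref{gfactor} requires $(|T|,|g|)=1$, which you have not arranged for $x$ and $K$, and Lemma \ref{vas2} needs a specific configuration (a normal $p$-subgroup extended by a cyclic group, acting on the section, here realized by a Frobenius subgroup $\langle y\rangle\rtimes\langle z\rangle$ of $S$ together with a fixed-point-free element of order $t$) that your setup does not supply. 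The paper's actual treatment first strips $K$ down to $G\simeq W.(F\times S)$ with $F$ an abelian $\Omega$-group centralized by $S$ and $\pi(W)\cap\Omega=\varnothing$, and only then invokes Lemmas \ref{gfactor} and \ref{vas2}; that preliminary reduction is what your proposal is missing, and without it the concluding contradiction cannot be reached.
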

\begin{proof}
Assume that $\Omega\cap\pi(K)\neq\varnothing$. Let $T$ be a normal subgroup of $G$ lying in $K$ such that
$\pi(K/T)\cap\Omega\neq\varnothing$ and $\pi((K/T)/R)\cap\Omega=\varnothing$ for a minimal normal subgroup $R$ of $K/T$.
Let $\widetilde{G}=G/T,~\widetilde{K}=K/T,~t\in\pi(\widetilde{K})\cap\Omega$ and $\widetilde{S}\leq \widetilde{G}$ be a
pre-image of $S$ of minimal order. Since $R$ is minimal normal subgroup of $\widetilde{G}$, we obtain $R=R_1\times...\times R_k$,
where $R_1,...,R_k$ are isomorphic simple groups. Assume that $R$ is non-solvable and $k>1$. By Lemma \ref{Pord} it follows
that in $R_1$ and $R_2$ there exist $y_1$ and $y_2$ such that $|\pi(|y_1|)|=|\pi(|y_2|)|=1$, $t\in\pi(|y_1^{R_1}|)$ and
$t\in \pi(|y_2^{R_2}|)$. We have that $t^2$ divides $|(y_1y_2)^{R_1R_2}|$ and there exists $g\in G$ such that $t^2$
divides $|g^G|$; a contradiction. Hence $k=1$. We have $\widetilde{S}<N_{\widetilde{G}}(R)$. Since $Out(A)$ is solvable
for any finite simple group and $(N_{\widetilde{G}}(R)/C_{\widetilde{G}}(R))/R<Aut(R)$, we obtain
$\widetilde{S}<C_{\widetilde{G}}(R)$. From Lemma \ref{Pord} it follows that there exists $g\in R$ such that
$t\in\pi(|g^R|)$. Using Lemma \ref{factorKh} we obtain that there exists an element $s\in\widetilde{S}$ such that
$|s|\in\Omega\setminus\pi(g)$ and $t\in\pi(|s^{\widetilde{S}}|)$. Therefore, $t^2$ divides $|(sg)^{\widetilde{G}}|$; a
contradiction.

It follows that $R$ is an abelian $t$-group for any prime $t$. Let $g\in \widetilde{S}$ and $|g|\in\Omega\setminus\{t\}$.
It is clear that $g$ centralizes $R$. Therefore, $\widetilde{S}$ centralizes $R$. Since $t\nmid\widetilde{K}/R$, we see
that $C_K(R)=R\times L$. Since $C_K(R)\vartriangleleft \widetilde{G}$, we see that $L\vartriangleleft \widetilde{G}$.
Hence, $L=1$. Therefore, $\widetilde{S}\simeq S$ and
$\widetilde{G}\simeq (R\leftthreetimes\overline{K})\times \widetilde{S}$, where $\overline{K}=\widetilde{K}/R$. If
$\overline{K}\not=1$, then $\overline{K}$ acts faithfully on $R$. By Lemma \ref{kant2} it follows that there exists
$h\in \widetilde{K}$ such that $|\pi(|h|)|=1$ and $t$ divides $|h^{\widetilde{K}}|$. Therefore,
$t^2$ divides $|(hg)^{\widetilde{G}}|$; a contradiction. Thus, $\overline{K}=1$. We have $G\simeq T.(R\times S)$.
Similarly we can obtain that $G\simeq W.(F\times S)$, where $\pi(W)\cap\Omega=\varnothing$ and $\pi(F)\subseteq\Omega$.
It is easy to prove that $F$ is an abelian group.

 Since $Z(G)=1$, we see that $|h^G|>1$ for any $h\in F$. By Lemma \ref{gfactor} if follows that there exist subgroups $L$
and $H$ such that $L\lhd G, L<W$, $H$ is an unique normal subgroup in $\widehat{G}=G/L$, $|x^H|>1$ for some $x\in F$ and
$|h^{(\widehat{G})/H}|=1$ for every $h\in F$. In particular, $(W.S)/L.H<C_{\widehat{G}/H}(F)$ and
$|h^{\widehat{G}}|=|h^{H}|$ for every $h\in F$.

Let $x\in F,~ |x|=t$,~ $r\in \Omega$ and $t\neq r\neq 2^j+1$. Since $H$ is a minimal normal subgroup of $\widehat{G}$, we
have $H\simeq H_1\times...\times H_l$ where $H_1,...,H_l$ are isomorphic finite simple groups. Assume that $H$ is an
elementary abelian $k$-group. It is clear that $C_H(x)\unlhd \widehat{G}$. By the definition of $H$ we obtain that
$C_H(x)=1$. Therefore, $x$ acts on $H$ fixed points freely. In $S$ there exists a Frobenius group
$A=\langle y\rangle\leftthreetimes\langle z\rangle$, where $|y|=r, |z|\in\pi(r-1)\setminus\{k\}$, since $r-1$ is not a
power of prime there exists $|z|$. Hence, $t\in\pi(|y^S|)\cap\pi(|z^S|)$. Let $\widehat{A}<\widehat{G}$ be a pre-image of
$A$ of minimal order. Since $r\nmid|\widehat{W}|$ and by Frattini argument, we obtain
$\langle \widetilde{y}\rangle\lhd \widehat{A}$, where $\widehat{y}\in \widehat{A}$ is a pre-image of $y$ of minimal order.
By Lemma \ref{vas2} it follows that one of groups $C_R(\widehat{y})$ or $C_R(\widehat{z})$ is not trivial.
Let $v\in\{\widehat{y}, \widehat{z}\}$ such that $C_H(v)>1$ and $f\in C_H(v)$. Hence the subgroup $C_{\widehat{G}}(vf)$
intersects with $x^{\widehat{G}}$ and with any Sylow $t$-subgroup of $\widehat{S}$ trivially. Thus
$t^2||(vf)^{\overline{G}}|$; a contradiction.

We have $H$ is non-solvable and $H_1$ is a non-abelian simple group. Since $t>s$ for any $s\in\pi(H)$ and by
Lemma \ref{vac2}, we see that $H_i^x\neq H_i$ for $1\leq i\leq l$. Let $b\in H, b\neq 1$, $b=b_1b_2...b_v$ where if
$b_i,b_j\in H_c$ for any $1\leq c\leq l$, then $i=j$, we shall that $leng(b)=v$. We have $t$ divides $leng(b)$ for any
$b\in C_H(x)$. If $a_1\in H_1$, $a=a_1\cdot a_1^y...a_t^{y^r}$, then $leng(a)=1$ or $leng(a)=r$. Hence,
$a\in C_{\widehat{G}}(\widehat{y})$ and $C_{\widehat{G}}(a)\cap x^{\widehat{G}}=\varnothing$. It follows that
$t\nmid|C_{\widehat{G}}(a\widehat{y})|$. Thus, $t^2$ divides $|(a\widehat{y})^{\widehat{G}}|$; a contradiction.
\end{proof}

By the condition,  $n$ or $n-1$ is equal to $p+q$, where $p$ and  $q$ are primes. We can assume that $p\geq q$.
Further our proof is divided into three propositions.
\begin{pr}\label{pr1}
If $p=q$ then Theorem is true.
\end{pr}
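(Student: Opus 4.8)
The plan is to single out the prime $p$, which—unlike the primes of $\Omega$—is \emph{not} protected by Lemma~\ref{Omega}, and to use the sum-of-two-equal-primes hypothesis to produce a distinguished conjugacy class that forces $S=Alt_n$; only then does the prime-power bookkeeping close up. Since $p=q$, either $n=2p$ or $n=2p+1$, and in both cases $p$ is an odd prime with $p>13$ and $|Alt_n|_p=p^2$, so by Lemma~\ref{Order} we have $p^2\mid|G|$. I would first record the class furnished by the hypothesis: the element $w\in Alt_n$ of cycle type $(p,p)$ (with one additional fixed point when $n=2p+1$). As $p$ is odd, each $p$-cycle is even and the permutation swapping the two cycles is odd, so $C_{Alt_n}(w)$ is exactly the order-$p^2$ subgroup generated by the two cycles, i.e. a full Sylow $p$-subgroup of $Alt_n$. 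Hence $\beta:=|Alt_n|/p^2\in N(Alt_n)$ is coprime to $p$, and transporting this to $G$ gives $y\in G$ with $|y^G|=\beta$ coprime to $p$; thus $C_G(y)$ contains a full Sylow $p$-subgroup $P$ of $G$ and $|G|_p=|C_G(y)|_p$.

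Next I would locate $\Omega$ and bound $\varepsilon$. Because $(n-\varepsilon,n]$ is prime-free while there is a prime in $(n/2,n)$ (Bertrand), we get $\Omega\neq\varnothing$, $\Omega\subseteq\pi(S)$, and $\varepsilon<n/2$, so $p\in\pi(S)$. For $t\in\Omega$ one has $n-\varepsilon<2t$, whence $|S|_t=t$, and by Lemma~\ref{Omega} the Sylow $t$-subgroups of $G$ already lie in $S$. For the special prime the decisive fact is $|S|_p=p^{\lfloor(n-\varepsilon)/p\rfloor}$, which equals $p^2$ exactly when $\varepsilon=0$ and equals $p$ once $1\le\varepsilon<p$, since then $n-\varepsilon<2p$ leaves room for only a single $p$-cycle inside $S$. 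So everything reduces to proving $\varepsilon=0$.

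The main obstacle is precisely the exclusion of $\varepsilon\ge1$. Here the usual Lemma~\ref{Omega}-type argument is unavailable: that argument rested on $|Alt_n|_t=t$ for $t\in\Omega$, so that forcing $t^2\mid|g^G|$ was already a contradiction, whereas $|Alt_n|_p=p^2$ makes a second factor of $p$ harmless in isolation. Suppose $\varepsilon\ge1$. Then $Aut(S)=Sym_{n-\varepsilon}$ has $p$-part $p$, and from $S\le G/K\le Aut(S)$ we get $|G/K|_p=p$; combined with $p^2\mid|G|$ this forces $p\mid|K|$. Conceptually, $S=Alt_{n-\varepsilon}$ moves fewer than $2p$ points and so cannot contain the type $(p,p)$ element, so the second factor of $p$ seen in $C_G(y)\supseteq P$ must come from $K$. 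To turn this into a contradiction I would, as in the proof of Lemma~\ref{Omega}, use Lemmas~\ref{kant2} and \ref{gfactor} to make a $p$-element of $K$ act nontrivially on a chief factor, control that action by Lemma~\ref{vas2}, and analyse a $p$-regular lift $x$ of a $t$-cycle of $S$ (with $t\in\Omega$, so $t\nmid|K|$ and Lemma~\ref{factorKh}(iv) applies) together with the $p$-element of $K$ centralised by $y$; the resulting $t$-coprime class size in $G$ is then forced to a value that, by the classification in Lemma~\ref{ConClass}, does not occur in $N(Alt_n)$. This yields $\varepsilon=0$, i.e. $S\simeq Alt_n$, and it is exactly this transfer of prime-power control from $\Omega$ to the unprotected prime $p$ where the equal-primes hypothesis is indispensable.

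Finally I would kill $K$ and finish. With $\varepsilon=0$ one has $|S|_p=p^2=|Aut(S)|_p$, hence $|G/K|_p=p^2=|G|_p$ and $p\nmid|K|$; together with Lemma~\ref{Omega} this removes every prime exceeding $n/2$ as well as $p$ from $\pi(K)$. Any remaining prime $r\le n/2$ dividing $|K|$ would, through an $r$-element of $K$ acting on a chief factor (Lemmas~\ref{kant2}, \ref{gfactor}, \ref{vas2}) and the description of class sizes in Lemma~\ref{ConClass}, again produce a class size of $G$ outside $N(Alt_n)$, forcing $K=1$. Then $Alt_n=S\le G\le Aut(S)=Sym_n$, so $G$ is $Alt_n$ or $Sym_n$; by Lemma~\ref{AnSn} the symmetric group is excluded, whence $G\simeq Alt_n$.
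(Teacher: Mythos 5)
Your opening is sound and matches the paper's starting point: the $(p,p)$-element gives $\delta=n!/2p^2\in N(G)$, coprime to $p$ and maximal under divisibility, and the whole proposition turns on forcing $\varepsilon=0$ and then $K=1$. But both decisive steps are left as sketches, and one of them starts from a false premise. First, your reduction "$\varepsilon\ge1\Rightarrow|Aut(S)|_p=p\Rightarrow p\mid|K|$" fails when $n=2p+1$ and $\varepsilon=1$: there $S\simeq Alt_{2p}$ and $|S|_p=p^2$, so no factor of $p$ is pushed into $K$ and your contradiction machinery never starts. The paper only extracts $\varepsilon\le1$ from the $p$-part count and must dispose of $\varepsilon=1$ separately, by comparing $|\overline{g}^S|\in\{\delta,\delta/n\}$ and showing $|g^K|=n$ would force $p$ to divide $|n|_t-1$ for each $t\in\pi(n)$, which is incompatible with $n-1=2p$.

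Second, the core step --- a contradiction from $p\mid|K|$ --- is only gestured at ("use Lemmas \ref{kant2}, \ref{gfactor}, \ref{vas2}, \ref{ConClass}\dots the resulting class size does not occur"). You never say which element you take, what its class size is, or why that size is absent from $N(Alt_n)$. The paper's Lemma \ref{pK} runs on a different and quite specific mechanism: Frattini on $P\in Syl_p(K)$, a minimal normal subgroup $M$ of $N_G(P)/O_{p'}(N_K(P))$ lying in the center of every Sylow $p$-subgroup, hence centralized by the preimage of $S$ (which is generated by $p$-elements); a $p$-element $g$ mapping into $M$ then has $|g^G|$ coprime both to $p$ and to every $t\in\Omega$, while the only members of $N(Alt_n)$ coprime to $p$, namely $\delta$ and $n!/p(n-p)!$, are each divisible by some $t\in\Omega$. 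Finally, your elimination of $K$ (a per-prime chief-factor analysis for every $r\le n/2$) is both unexecuted and harder than necessary: the paper instead notes that $|g^G|=|\overline{g}^S|=\delta$ forces $K\le C_G(g)$, so any $h\in K$ with $|h^G|>1$ makes $|(hg)^G|$ properly divisible by $\delta$, contradicting maximality of $\delta$ under divisibility. As it stands, the proposal identifies the right landmarks but does not prove the proposition.
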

\begin{pr}\label{pr2}
If $q=3$ then Theorem is true.
\end{pr}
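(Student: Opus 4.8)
The plan is to reduce the whole statement to the single assertion $K=1$. By Lemma \ref{Order}, $|Alt_n|=n!/2$ divides $|G|=|K|\cdot|G/K|$, and since $S\le G/K\le Aut(S)$ we have $|G/K|\le|Aut(Alt_{n-\varepsilon})|=(n-\varepsilon)!$. Hence $K=1$ would give $n!/2\mid(n-\varepsilon)!$, impossible for $n\ge 27$ and $\varepsilon\ge1$; so $K=1$ forces $\varepsilon=0$ and $Alt_n\le G\le Sym_n$, whence $G\cong Alt_n$ by Lemma \ref{AnSn}. Thus everything comes down to excluding $K\ne1$ (and, along the way, $\varepsilon\ge1$).

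Set $t=p$. Since $q=3$ and $p\ge q$, we have $p+3\in\{n,n-1\}$, so $p>n/2$, i.e. $t\in\Omega$; by Lemma \ref{Omega}, $t\nmid|K|$, and as the $t$-part of $Aut(S)=Sym_{n-\varepsilon}$ is at most $t$, we get $|G|_t=t$. Choose $x\in G$ of order $t$; its image $\bar x\in G/K$ is a $t$-cycle, and $G/K\in\{Alt_{n-\varepsilon},Sym_{n-\varepsilon}\}$ since $Out(S)=C_2$. Because $n-t\in\{3,4\}$, Lemma \ref{ConClass} leaves only a very short list of admissible values for a class size not divisible by $t$: the values $\frac{|Alt_n|}{|Alt_{t+i}||B|}$ are far too small to be divisible by $|\bar x^{G/K}|$, so by Lemma \ref{factorKh}(ii) the size $|x^G|$ must be one of the values $\frac{|Alt_n|}{t|C|}$ with $C=C_{Alt_{n-t}}(g)$. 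Computing $|\bar x^{G/K}|$ from the cycle type of $\bar x$ and dividing, via $C_{G/K}(\bar x)=C_G(x)K/K$ (Lemma \ref{factorKh}(iv)) and $|x^G|=|x^K|\cdot|\bar x^{G/K}|$, then yields $|x^K|=|K:C_K(x)|$ as an explicit small number in each case.

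First I would exclude $\varepsilon\ge1$. The arithmetic $p+3\in\{n,n-1\}$ with $p$ prime forces $3\nmid n$ when $n=p+3$ and $n\not\equiv1\pmod3$ when $n-1=p+3$; feeding this into the explicit expressions for $|x^K|$ makes that index non-integral for the offending $\varepsilon$, a contradiction. For $\varepsilon=0$ the possible values of $|x^K|$ are $1$ together with a few small integers such as $3,4$ (or $2,6,8$ in the $Sym$-case). Here I use the coprime action of $\langle x\rangle$ on $K$: as every $r\in\pi(K)$ satisfies $r\le n/2<t$, one has $t\nmid r-1$, so the order $e_r$ of $r$ modulo $t$ is at least $2$; running Lemma \ref{Gore} along a chief series of $K$ (for solvable $K$) shows $e_r\mid v_r(|x^K|)$ for every $r\mid|x^K|$, which each small value violates. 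Hence $|x^K|=1$, i.e. $x$ centralizes $K$. Non-solvable $K$ is handled by the device of Lemma \ref{Omega}: a non-abelian chief factor of $K$, via Lemmas \ref{Pord} and \ref{vac2}, would force $t^2$ into a class size, impossible since $|G|_t=t$.

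For the endgame, $K\unlhd G$ implies every $G$-conjugate of $x$ also centralizes $K$; their normal closure covers the socle $S$ of the almost simple group $G/K$, so $[G:C_G(K)]\le2$. If $K\ne1$ then $C_G(K)\ne G$ (else $K\le Z(G)=1$), so $[G:C_G(K)]=2$; choosing $k\in K$ with $k^g\ne k$ for some $g\notin C_G(K)$ gives $C_G(K)\le C_G(k)\lneq G$, forcing $C_G(k)=C_G(K)$ and hence $|k^G|=2\notin N(Alt_n)$, a contradiction. Therefore $K=1$, and by the first paragraph $G\cong Alt_n$. I expect the main obstacle to be the complete elimination of $\varepsilon\ge1$: the congruence trick disposes of most cases at once, but a few residues (for instance $\varepsilon=2$ with $3\mid n-1$ in the case $n=p+3$) leave $|x^K|$ integral, and ruling these out appears to require either the divisibility constraint $e_r\mid v_r(|x^K|)$ pushed to its full strength or a second large prime of $\Omega$ producing an incompatible value of the index.
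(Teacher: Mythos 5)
Your overall strategy is the one the paper uses: take an element $x$ of order $p$ with $p\in\Omega$, invoke Lemma \ref{Omega} to get $p\nmid|K|$ and $|G|_p=p$, pin down $|x^G|$ via Lemmas \ref{ConClass} and \ref{factorKh}, and attack $|x^K|=|K:C_K(x)|$ by coprime action. But two steps do not go through as written. The first is the one you flag yourself: the elimination of $\varepsilon\ge1$ is the substance of the proposition and cannot be left with "a few residues survive." The fix is to apply the congruence to the whole index rather than prime by prime: after excluding non-abelian chief factors of $K$ (they force a $t$-part of size at least $p+1$ into $|x^K|$, incompatible with the explicit bound on $|x^G|$), Lemma \ref{Gore} along an $x$-invariant series gives $|x^K|\equiv1\pmod p$, while $|x^K|=|x^G|/|\overline{x}^{G/K}|$ is an explicit quotient of a product of at most three factors from $\{n-\varepsilon+1,\dots,n\}$; reducing it modulo $p$ using $n\equiv 3$ or $4\pmod p$, and using $1<|x^K|<p+1$ where the residue happens to be $1$ (e.g.\ $|x^K|=n/4$ when $n=p+4$), disposes of every case. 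This is exactly the computation the paper compresses into "it is easy to prove that $p$ divides $|\prod_{n-\varepsilon+1}^n i|_t-1$."

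The second gap is in your endgame and is not acknowledged. From "every conjugate of $x$ centralizes $K$ and the normal closure covers the socle of $G/K$" you may conclude only that $C_G(K)K$ contains the preimage of $S$, i.e.\ $[G:C_G(K)K]\le2$, hence $[G:C_G(K)]\le 2[K:Z(K)]$. The jump to $[G:C_G(K)]\le2$ silently assumes $K$ abelian, which you have not established, and without it you only get that $|k^G|$ divides $2[K:Z(K)]$ — not obviously outside $N(Alt_n)$. The paper closes this step differently (it refers to the argument of Lemma \ref{gan}): from $K\le C_G(x)$ and a non-central $h\in K$ of order prime to $p$ chosen so that $C_G(x)\not\le C_G(h)$, Lemma \ref{factorKh}(iii) gives that $|x^G|$ divides $|(xh)^G|$ properly, contradicting the fact that $|x^G|=|Alt_n|/\bigl(p\,|C_{Alt_{n-p}}(\cdot)|\bigr)$ is maximal under divisibility in $N(Alt_n)$. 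You should replace your final paragraph with an argument of that kind (or first prove $K$ abelian, which your present text does not do).
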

\begin{pr}\label{pr3}
$G\simeq Alt_n$.
\end{pr}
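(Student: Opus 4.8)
The plan is to dispose of the special cases with Propositions~\ref{pr1} and~\ref{pr2} and then treat the generic configuration by tracking one very rigid class size. So I would begin by observing that if $p=q$ or $q=3$ the claim follows from Proposition~\ref{pr1} or~\ref{pr2}, hence I may assume $p>q\geq5$, with $p,q$ odd and $p>n/2$. Since $n$ or $n-1$ equals $p+q$ and both cycles of a permutation of type $(p,q)$ are even, such a permutation lies in $Alt_n$ with centralizer of order $pq$; therefore $\alpha:=|Alt_n|/pq\in N(Alt_n)=N(G)$. The features that drive the whole argument are that $p\in\Omega$ while $p\nmid\alpha$ (because $p>n/2$ makes the $p$-part of $n!$ equal to $p$, so the single factor $p$ is cancelled), and that, as $|Alt_n|$ divides $|G|$ by Lemma~\ref{Order} and $p\notin\pi(K)$ by Lemma~\ref{Omega}, the $p$-part of $|G|$ is $p$. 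I then fix $g\in G$ with $|g^G|=\alpha$.

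The first main step is to force $\varepsilon=0$. By Lemma~\ref{factorKh}(ii) the image $\widetilde g$ of $g$ in the composition factor $S\simeq Alt_{n-\varepsilon}$ satisfies $|\widetilde g^{S}|\mid\alpha$, so $p\nmid|\widetilde g^{S}|$. Because $(n-\varepsilon,n]$ contains no primes we have $p\leq n-\varepsilon$ and $p>(n-\varepsilon)/2$, so $p$ is a large prime of $S$ and Lemma~\ref{ConClass}, applied to $Alt_{n-\varepsilon}$ (legitimate since $n-\varepsilon>26$), pins $|\widetilde g^{S}|$ to one of its two prescribed shapes, with a single $p$-cycle of $S$ centralized by $\widetilde g$. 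I would then compare the resulting value of $|C_S(\widetilde g)|$ against $pq$ and use Lemma~\ref{Hanson} on the consecutive integers $n,\dots,n-\varepsilon+1$ to exclude every $\varepsilon\geq1$; the upshot is $S\simeq Alt_n$, and now $pq\mid|C_S(\widetilde g)|$.

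With $S\simeq Alt_n$ and $Alt_n\leq G/K\leq Aut(Alt_n)=Sym_n$, the second main step is $K=1$. Assuming $K\neq1$, I pick $r\in\pi(K)$; Lemma~\ref{Omega} gives $r\leq n/2$, so $r\notin\Omega$ and $r<t$ for every $t\in\Omega$. I would reproduce the scheme of the proof of Lemma~\ref{Omega}: reduce to a minimal normal section of $K$, use Lemmas~\ref{kant2} and~\ref{gfactor} to manufacture an $r$-element $h$ with $|h^{G}|>1$, and use Lemma~\ref{Pord} to choose $s\in S$ of prime order $t\in\Omega$ with $t\mid|s^{S}|$ and $r\nmid|s|$. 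Since $(|h|,|s|)=1$, Lemma~\ref{factorKh}(iii) yields $C_G(hs)=C_G(h)\cap C_G(s)$, and separating the abelian, non-abelian and Fermat-prime ($t=2^{j}+1$) cases by Lemmas~\ref{Gore}, \ref{vas2} and~\ref{vac2} should produce an element of $G$ whose class size is incompatible with the two forms allowed by Lemma~\ref{ConClass}, i.e.\ a value lying outside $N(Alt_n)$; this contradiction gives $K=1$.

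Finally, $K=1$ and $S\simeq Alt_n$ give $Alt_n\simeq S\leq G\leq Sym_n$, while $|Alt_n|$ divides $|G|$ by Lemma~\ref{Order}; hence $G\in\{Alt_n,Sym_n\}$, and Lemma~\ref{AnSn} eliminates $Sym_n$, so $G\simeq Alt_n$. I expect the genuine obstacle to be the two middle steps rather than this conclusion: the numerical comparison forcing $\varepsilon=0$ must rule out all intermediate degrees simultaneously, and the elimination of $K$ must handle at once the non-abelian minimal normal subgroups and the Fermat-prime exceptions of Lemma~\ref{vas2}, where exhibiting an element whose class size provably does not occur in $N(Alt_n)$ is the most delicate point.
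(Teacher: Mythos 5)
Your skeleton (reduce to $p>q>3$, force $\varepsilon=0$, force $K=1$, finish with Lemmas \ref{Order} and \ref{AnSn}) matches the paper's, and the final step is fine, but the two middle steps --- which you yourself flag as the real difficulty --- are not actually carried out, and the sketches you give for them do not work as stated. First, you anchor the argument on an element $g$ with $|g^G|=|Alt_n|/pq$, whereas the paper anchors on an element $g$ of order $p$ and must first \emph{prove} (Lemma \ref{alpha}) that $|g^G|=n!/p(n-p)!$, i.e.\ that $C_G(g)$ is as large as possible; this already requires a nontrivial maximality-by-divisibility argument and is the pivot for everything that follows. Second, your claim that ``comparing $|C_S(\widetilde g)|$ against $pq$ and using Lemma \ref{Hanson}'' excludes all $\varepsilon\geq1$ is unsupported. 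Lemma \ref{Hanson} only produces a prime $t>3\varepsilon/2$ dividing $\Pi=\prod_{n-\varepsilon+1}^{n}i$; to turn that into a contradiction you need \emph{both} an upper bound on the primes dividing $\Pi$ (the paper's Lemma \ref{Pi}, proved via $p\mid |g^K|_t-1$) \emph{and} a lower bound on $\varepsilon$ in terms of $q$, which the paper extracts from the bound $n-p-\varepsilon<5$ (Lemma \ref{epsi5}). That bound in turn rests on the entire analysis of the Sylow $q$-subgroup $Q$ of $C_G(g)$ (Lemma \ref{vse}: $|Q/Q\cap Z(C)|=q$, all products $gv$ with $v\in Q\setminus Z(C)$ having class size $\gamma=n!/pq$) together with a Frattini argument producing an $Alt_5$-section in $N_{C}(Q)$ and the forced coincidence $C_G(gh_1)=C_G(gh_2)$. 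None of this machinery appears in your proposal, and even after it one still needs a separate treatment of $t=3$ dividing two members of $\{n-\varepsilon+1,\dots,n\}$.

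The $K=1$ step is also not a replay of Lemma \ref{Omega}. That lemma's contradictions all hinge on the primes in question lying in $\Omega$, hence exceeding $n/2$ and exceeding every prime of the relevant minimal normal sections, which is what makes ``$t^2$ divides some class size'' available; for your $r\in\pi(K)$ with $r\leq n/2$ (and typically $r\in\pi(S)$) those arguments collapse. The paper instead observes that $|g^G|=|\overline{g}^{G/K}|$ forces $K<C_G(g)$, so $K.S$ is a central extension with $Z(K)=1$, and then kills a hypothetical element $h\in K$ of prime order by pairing it with an element $x$ whose class size $n!/\theta$, $\theta\in\{n-1,n-2\}$, is maximal by divisibility in $N(G)$, so that $|(hx')^G|$ properly exceeds it. You would need to supply an argument of this kind; as written, your proof of Proposition \ref{pr3} is a plan rather than a proof.
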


\section{Proof of Proposition \ref{pr1}}
Assume that $n$ or $n-1$ is equal to $2p$.
\begin{lem}
$\delta:=n!/2p^2\in N(G)$
\end{lem}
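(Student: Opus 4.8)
The plan is to prove the stronger statement $\delta\in N(Alt_n)$; since $N(G)=N(Alt_n)$, this immediately yields $\delta\in N(G)$. Because every conjugacy class of $Alt_n$ has size $|Alt_n|/|C_{Alt_n}(g)|=(n!/2)/|C_{Alt_n}(g)|$, it suffices to exhibit a single element $g\in Alt_n$ whose centralizer in $Alt_n$ has order exactly $p^2$, for then $|g^{Alt_n}|=(n!/2)/p^2=n!/2p^2=\delta$.

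First I would fix the element explicitly. Since $n>26$ and $2p\in\{n,n-1\}$, we have $p\geq 13$, so $p$ is odd. Take $g$ to be a product of two disjoint $p$-cycles built on $2p$ of the $n$ points (fixing the one remaining point in the case $n=2p+1$). As $p$ is odd, each $p$-cycle is an even permutation, so $g\in Alt_n$, and $g$ has cycle type $p^2$ when $n=2p$ and cycle type $p^2\,1^1$ when $n=2p+1$.

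Next I would compute the centralizer of $g$ in $Sym_n$ via the standard formula for symmetric-group centralizers: for either cycle type the order is $p^2\cdot 2!=2p^2$, the factor $2!$ accounting for the freedom to interchange the two cycles of equal length. To descend to $Alt_n$, I would use $C_{Alt_n}(g)=C_{Sym_n}(g)\cap Alt_n$ and observe that $C_{Sym_n}(g)$ contains the permutation $\tau$ swapping the two $p$-cycles. Since $\tau$ is a product of $p$ transpositions and $p$ is odd, $\tau$ is an odd permutation, so $C_{Sym_n}(g)\not\subseteq Alt_n$. Hence the index $[C_{Sym_n}(g):C_{Alt_n}(g)]$ equals $2$, giving $|C_{Alt_n}(g)|=|C_{Sym_n}(g)|/2=p^2$, and therefore $|g^{Alt_n}|=\delta$, as required.

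The computation is entirely routine; the only point demanding care is the parity of $\tau$, which decides whether the relevant centralizer order is $p^2$ (class does not split) or $2p^2$ (class splits into two $Alt_n$-classes of half the size). This is precisely where the oddness of $p$, forced by $n>26$, enters, so I would record that remark before carrying out the centralizer count.
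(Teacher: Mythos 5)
Your proof is correct and follows the same route as the paper: the paper exhibits the element $g=(1,2,\dots,p)(p+1,\dots,2p)$ and asserts $|C_{Alt_n}(g)|=p^2$ as obvious, while you supply the details (centralizer of order $2p^2$ in $Sym_n$, the cycle-swapping permutation being odd since $p$ is odd, hence index $2$ descent to $Alt_n$). No discrepancy.
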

\begin{proof}
Take in $Alt_n$ the element $g=(1,2,...,p)(p+1,...,2p)$. It is obvious that $|C_{Alt_n}(g)|=p^2$.
\end{proof}
\begin{lem}\label{ab}
If $p\nmid\alpha$ for any $\alpha\in N(G)$, then $\alpha=\delta$ or $\alpha=n!/p(n-p)!$.
\end{lem}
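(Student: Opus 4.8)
We are in Proposition 1, so $n$ or $n-1$ equals $2p$, and we have just established that $\delta = n!/2p^2 \in N(G)$. The statement to prove, Lemma~\ref{ab}, asserts that the only conjugacy class sizes of $G$ not divisible by $p$ are $\delta$ and $n!/p(n-p)!$. Since $N(G)=N(Alt_n)$, this is really a statement about $N(Alt_n)$: the $p$-regular class sizes of the alternating group are exactly these two values.

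Let me think about what these two values mean combinatorially in $Alt_n$.

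**Unpacking the two candidate values.** First, $\delta = n!/2p^2$ is the centralizer-index of $g = (1,\dots,p)(p+1,\dots,2p)$, a product of two $p$-cycles, whose centralizer in $Alt_n$ has order $p^2$ (two independent cyclic factors, and since $2p \le n$ but $n < 3p$, there is no room left to move). Second, $n!/p(n-p)!$ is the class size of a single $p$-cycle $h=(1,2,\dots,p)$: its centralizer in $Sym_n$ is $\langle h\rangle \times Sym_{n-p}$ of order $p\cdot(n-p)!$, and since the class is already even (lies in $Alt_n$), the centralizer in $Alt_n$ has the same order, giving class size $n!/(2\cdot p(n-p)!/2) = n!/p(n-p)!$... so the two natural $p$-regular classes are the single $p$-cycle and the double $p$-cycle.

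**The plan.** I would prove the contrapositive in spirit: take any $\alpha\in N(Alt_n)$ with $p\nmid\alpha$, realized by some $x\in Alt_n$, and show $\alpha$ is one of the two listed values. Since $\alpha = |Alt_n|/|C_{Alt_n}(x)|$ and $p\nmid\alpha$, the full Sylow $p$-part $|Alt_n|_p = p$ (as $p > n/2$ forces exactly one factor of $p$ in $n!$) must divide $|C_{Alt_n}(x)|$. Hence $p \mid |C_{Alt_n}(x)|$, which means $x$ commutes with some element of order $p$; concretely, the cycle type of $x$ must leave enough fixed or structured points for a $p$-cycle to commute with it. The key arithmetic fact is $p > n/2$, so a $p$-cycle occupies more than half the points and at most two disjoint $p$-cycles fit. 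I would analyze the cycle type of $x$ by writing $x = x_p \cdot x_{p'}$ (its $p$-part and $p'$-part commuting), and use that $p \mid |C_{Alt_n}(x)|$ forces either $x_p = 1$ with $x$ moving at most $n-p$ points (so a $p$-cycle centralizes it — the single-cycle case, yielding $n!/p(n-p)!$), or $x_p$ itself contains $p$-cycles.

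**Main obstacle and finish.** The delicate step is handling the case where $x$ itself has order divisible by $p$: then $x$ contains one or two $p$-cycles in its decomposition, and I must show that if the $p'$-part $x_{p'}$ is nontrivial then $p^2 \mid \alpha$, contradicting $p\nmid\alpha$. This is where the structure of $\Omega$ and the hypothesis ``$n,n-1,n-2$ are not prime'' indirectly matter, but more immediately it is a centralizer computation: a single $p$-cycle times a nontrivial $p'$-permutation on the remaining $\le n-p < p$ points has centralizer of order $p\cdot|C|$ with $|C|$ properly dividing $(n-p)!$, and one checks the resulting index is divisible by $p$ unless the $p'$-part is trivial or unless we are in the balanced double-$p$-cycle configuration giving $\delta$. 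I expect the bookkeeping for the two-$p$-cycle case — confirming $C_{Alt_n}(g)=\langle\text{the two cycles}\rangle$ has order exactly $p^2$ with nothing left over, so that $\alpha=\delta$ precisely — to be the part requiring the most care, since one must rule out extra commuting structure and verify the alternating (versus symmetric) centralizer does not pick up an index-$2$ discrepancy.
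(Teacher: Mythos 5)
The paper dismisses this lemma with ``the proof is obvious,'' so you are supplying an argument the author omits; your overall strategy --- classify the classes of $Alt_n$ whose size is coprime to $p$ by a centralizer/cycle-type computation --- is the natural and essentially only way to do it. However, your write-up contains a concrete error at the load-bearing step. You assert that $|Alt_n|_p=p$ ``as $p>n/2$ forces exactly one factor of $p$ in $n!$.'' In Proposition 1 we have $2p=n$ or $2p=n-1$, so $p\le n/2$ and \emph{two} multiples of $p$ (namely $p$ and $2p$) lie in $\{1,\dots,n\}$, while $p^2>n$; hence $|n!|_p=|Alt_n|_p=p^2$. This contradicts your own correct observations two sentences earlier that $2p\le n$ and that the double-$p$-cycle has centralizer of order exactly $p^2$ (which could not divide $|Alt_n|$ if $|Alt_n|_p$ were only $p$). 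Because of this slip you deduce only $p\mid |C_{Alt_n}(x)|$, which is too weak to finish cleanly, and indeed your remaining case analysis is left as a sketch (``one checks\dots'', ``I expect the bookkeeping\dots'') rather than carried out.

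The correct $p$-part actually makes the lemma genuinely easy, which is presumably why the paper calls it obvious: if $p\nmid\alpha=|x^{Alt_n}|$ then $p^2$ divides $|C_{Alt_n}(x)|$, so $C_{Alt_n}(x)$ contains a full Sylow $p$-subgroup $P=\langle c_1\rangle\times\langle c_2\rangle$ generated by two disjoint $p$-cycles covering $2p\ge n-1$ points. Since $C_{Sym_n}(P)=P\times Sym_{n-2p}$ and $n-2p\le 1$, the element $x$ must lie in $P$ itself. Thus $x$ is trivial, a single $p$-cycle, or a product of two disjoint $p$-cycles, and the standard centralizer formulas (with the check that both centralizers in $Sym_n$ contain odd permutations, so no class splitting occurs) give $\alpha\in\{1,\;n!/p(n-p)!,\;n!/2p^2\}$. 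I recommend you replace the ``$p>n/2$'' step with this argument; as written, the step is false and the proof does not close.
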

\begin{proof}
The proof is obvious.
\end{proof}
\begin{lem}\label{pK}
$p\nmid |K|$
\end{lem}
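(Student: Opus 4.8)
The plan is to argue by contradiction: assume $p\mid|K|$ and manufacture an element $x\in G$ with $p^{3}\mid|x^{G}|$. This will be impossible, because $|Alt_n|_p=p^{2}$ (indeed $\lfloor n/p\rfloor=2$ while $\lfloor n/p^{2}\rfloor=0$, since $2p\le n$ and $p^{2}>n$), so every $\alpha\in N(G)=N(Alt_n)$ satisfies $\alpha_p\le p^{2}$. Throughout I use that $p\in\pi(S)$: the number $m:=n-\varepsilon$ is not prime and $[m,n]$ is prime-free, so $p<m$ (as $p\le n/2<n$ forces $p\notin[m,n]$), whence $p\mid|Alt_m|=|S|$ and also $p\mid|G/K|$.

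The contradiction is to be assembled from two independent sources of $p$. The first lies above $K$: applying Lemma \ref{Pord} to the composition factor $S\simeq Alt_m$ yields a prime-power element of $S$ whose class size has $p$-part equal to $|S|_p=p^{\lfloor m/p\rfloor}$, and Lemma \ref{factorKh}(ii) lifts this to an element of $G$ whose class size is divisible by $|S|_p$. The second lies inside $K$: since $p\mid|K|$, I would choose $T\lhd G$ with $T\le K$ of largest order subject to $p\mid|K/T|$, pass to $\widetilde G=G/T$, and take a minimal normal subgroup $R$ of $\widetilde G$ contained in $\widetilde K=K/T$. Maximality of $T$ then forces $p\mid|R|$ and $p\nmid|\widetilde K/R|$, exactly as in the opening of the proof of Lemma \ref{Omega}. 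Writing $R=R_{1}\times\cdots\times R_{k}$ with the $R_i$ isomorphic simple and $p\mid|R_i|$, Lemma \ref{Pord} (non-abelian case) or a direct analysis (the elementary abelian case $R\cong(\mathbb{Z}/p)^{d}$) supplies an element of $R$ whose class size in $\widetilde G$, hence in $G$ by Lemma \ref{factorKh}(i), is divisible by $p$.

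The heart of the argument is to place both contributions on a single element so that the $p$-parts of the two centralizers multiply. For this I would combine, through a commuting product of coprime order and Lemma \ref{factorKh}(iii), the $K$-side element just produced with the preimage of the $S$-element, making $|x^{G}|$ divisible by $p\cdot|S|_p$. When $|S|_p\ge p^{2}$ (that is, $m\ge 2p$), or when $R$ alone already contributes $p^{2}$ (for instance $k\ge2$, using one factor of $p$ from each of $R_{1},R_{2}$ as in Lemma \ref{Omega}), this already gives $p^{3}\mid|x^{G}|$ and the proof is finished.

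The main obstacle is the residual case where $|S|_p=p$ and $R$ is an elementary abelian $p$-group contributing only a single $p$; here a bare order count produces no contradiction and one must extract an extra factor of $p$. I would handle it precisely as in the final paragraphs of the proof of Lemma \ref{Omega}: using Lemma \ref{gfactor} and Lemma \ref{kant2} to reduce to a unique minimal normal subgroup on which a chosen element acts, then invoking the Frobenius-group criterion of Lemma \ref{vas2} to force a nontrivial fixed-point subgroup, whose interaction with a suitable element of $S$ of order in $\Omega$ (note $\Omega\subseteq\pi(S)$ and $\pi(R)\subseteq\pi(K)$ is disjoint from $\Omega$) produces the missing $p$. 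The same fixed-point-freeness and cycle-length bookkeeping used there yields $p^{3}\mid|x^{G}|$, the desired contradiction. Hence $p\nmid|K|$.
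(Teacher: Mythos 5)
Your contradiction target ($p^{3}$ dividing some class size, against $|Alt_n|_p=p^{2}$) is legitimate, but the route to it has two genuine gaps. First, the combination step does not work as stated: for commuting elements of coprime order, Lemma \ref{factorKh}(iii) gives $C_G(ks)=C_G(k)\cap C_G(s)$, so $|(ks)^G|$ is divisible by the least common multiple of $|k^G|$ and $|s^G|$ --- the $p$-parts do not multiply. To obtain the product $p\cdot|S|_p$ you must route the two contributions through different layers, i.e.\ exhibit a single element $x$ with $p\mid |x^K|$ and $|S|_p\mid|\overline{x}^{G/K}|$; this forces $x=ks$ with $k\in K$, $p\mid|k^K|$, and $s$ a $p'$-preimage of the Lemma \ref{Pord} element that \emph{commutes} with $k$. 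Nothing in your argument produces such a commuting pair, and this is precisely the difficulty the paper's proof is built to avoid: it applies the Frattini argument to $P\in Syl_p(K)$, so that $N_G(P)$ already covers $G/K$ and the interaction between the $p$-part of $K$ and a copy of $S$ takes place inside one subgroup. Second, the residual case is not actually handled. At this point of the paper $\varepsilon$ has not yet been bounded, so $m=n-\varepsilon<2p$, hence $|S|_p=p$, is a live possibility; your scheme then yields at most $p^{2}$, which is no contradiction. The proposed rescue --- imitating the endgame of Lemma \ref{Omega} --- does not transfer: that argument works at primes $t\in\Omega$, where $|G|_t=t$ so that $t^{2}$ in a class size is already absurd, and it builds Frobenius groups $\langle y\rangle\rtimes\langle z\rangle$ with $|y|=r\in\Omega$ and $r-1$ not a prime power before invoking Lemma \ref{vas2}; for the prime $p\le n/2$ with $|G|_p\ge p^{2}$ none of these ingredients is available, and you would need a third factor of $p$, not a second.

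For contrast, the paper reaches a different contradiction entirely: via Frattini it finds $g$ of order $p$ in a minimal normal subgroup $M$ of $N_G(P)/O_{p'}(N_K(P))$, observes that every $p$-element of that quotient centralizes $M$, hence that a copy of $S$ (being generated by its $p$-elements) centralizes $g$, so that $|g^G|$ is coprime both to $p$ and to every prime in $\Omega$ (the latter using Lemma \ref{Omega}); this is incompatible with Lemma \ref{ab}, since both admissible $p'$-class sizes $n!/2p^{2}$ and $n!/p(n-p)!$ are divisible by the primes of $\Omega\setminus\{p\}$. If you want to keep your $p^{3}$ strategy, you should at least borrow the Frattini step so that the two sources of $p$ are forced to interact.
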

\begin{proof}
Assume that $p$ divides $|K|$. Let $P\in Syl_p(K)$. By Frattini argument, $N_G(P)/N_K(P)\simeq G/K$. Let
$N=N_G(P)/O_{p'}(N_K(P))$, $\overline{S}$ be a minimal pre-mage in $N$ of $S$, $M$ be a minimal normal subgroup in $N$.
Take an element $g\in N_G(P)$ such that $|g|=p$ and $\overline{g}=gO_{p'}(K)/O_{p'}(K)\in M$. It is obvious that $M<Z(Q)$
for any  $Q\in Syl_p(N)$. In particular, every $p$-element of $N$ centralizes $M$. Since $\overline{S}$ is generated by
all its $p$-elements, we see that $\overline{S}<C_N(M)$. In particular, $t\in\pi(C_N(\overline{g})$ for any $t\in \Omega$.
Using Lemma \ref{factorKh} we get that $p\nmid|g^G|$. Therefore, $|g^G|=\delta$. Combining Lemmas \ref{Omega} and
\ref{factorKh} we obtain that $\pi(|g^G|)\cap\Omega=\varnothing$; a contradiction.
\end{proof}

\begin{lem}\label{gan}
$G\simeq Alt_n$
\end{lem}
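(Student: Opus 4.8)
The plan is to read off the value of $\varepsilon$ from the $p$-part of the order, then eliminate the kernel $K$ and finally the outer automorphisms. Since $2p\in\{n,n-1\}$, the only multiples of $p$ in $[1,n]$ are $p$ and $2p$, and $p^2>n$, so $|Alt_n|_p=p^2$. By Lemma~\ref{Order} we have $p^2\mid |G|$, while Lemma~\ref{pK} gives $p\nmid|K|$, hence $p^2\mid |G/K|$. As $S\simeq Alt_{n-\varepsilon}\leq G/K\leq Aut(S)$ and $|Aut(Alt_{n-\varepsilon})|_p=(n-\varepsilon)!_p$, we need $p^2\mid (n-\varepsilon)!$, i.e. $n-\varepsilon\geq 2p\geq n-1$. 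Therefore $\varepsilon\leq 1$, with $\varepsilon=1$ possible only when $2p=n-1$.

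The crux is to show $K=1$. Suppose $\pi(K)\neq\varnothing$ and fix $r\in\pi(K)$; by Lemmas~\ref{Omega} and \ref{pK} we have $r\notin\Omega$ and $r\neq p$. Taking a Sylow $r$-subgroup $R_0$ of $K$ and a nontrivial $h\in Z(R_0)$, the hypothesis $Z(G)=1$ forces $|h^G|>1$. I would pick a $p$-element $g$ of $G$ whose image in $S$ is a product of two disjoint $p$-cycles, so that $|g^G|$ carries the $p$-regular value $\delta$, and arrange $g$ and $h$ to commute with coprime orders (via a Sylow/Frattini argument inside $C_G(h)$, as in Lemma~\ref{gfactor}). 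Then Lemma~\ref{factorKh}(iii) gives $C_G(gh)=C_G(g)\cap C_G(h)$, and the contribution of $|h^K|$ (a nontrivial power of $r$ dividing $|h^G|$ by Lemma~\ref{factorKh}(i)) makes $|(gh)^G|$ a $p$-regular class size different from both $\delta$ and $n!/p(n-p)!$, contradicting Lemma~\ref{ab}. If the $r$-element instead yields a class size divisible by some $t\in\Omega$ in a way incompatible with the two admissible forms of Lemma~\ref{ConClass}, the same type of contradiction appears. Hence $\pi(K)=\varnothing$, i.e. $K=1$.

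Once $K=1$ we have $S\leq G\leq Aut(S)$. If $\varepsilon=1$ then $G\leq Sym_{n-1}$, so $|G|$ divides $(n-1)!$; but $|Alt_n|=n!/2$ divides $|G|$ by Lemma~\ref{Order}, which would force $n!/2\mid(n-1)!$, i.e. $n\leq 2$, absurd. Thus $\varepsilon=0$ and $Alt_n\leq G\leq Sym_n$, so $G\in\{Alt_n,Sym_n\}$; Lemma~\ref{AnSn} excludes $Sym_n$, and therefore $G\simeq Alt_n$.

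The main obstacle is the second step: manufacturing from an arbitrary prime $r\in\pi(K)$ a genuinely forbidden conjugacy class size. The delicate point is guaranteeing that the $p$-element $g$ realizing $\delta$ can be chosen to commute with the chosen $r$-element $h$ while keeping $|(gh)^G|$ exactly $p$-regular, so that the extra factor $r$ is visibly incompatible with Lemma~\ref{ab}; controlling the $p$-part of $C_G(g)\cap C_G(h)$ is where the real work lies, and it is the part most likely to require the finer structural input of Lemmas~\ref{factorKh} and \ref{ConClass}.
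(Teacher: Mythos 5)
Your skeleton (bound $\varepsilon$ via the $p$-part of $|G/K|$, kill $K$ by exhibiting an impossible class size, finish with Lemma~\ref{AnSn}) is the paper's skeleton, and your first and last steps are sound; your elimination of $\varepsilon=1$ by comparing $|Aut(Alt_{n-1})|$ with $n!/2$ is even a clean alternative to the paper's. But the middle step, which you yourself flag as ``where the real work lies,'' is a genuine gap, and it is the entire content of the lemma. You need a $p$-element $g$ with $|g^G|=\delta=n!/2p^2$ commuting with some nontrivial $h\in K$, and a ``Sylow/Frattini argument inside $C_G(h)$'' does not deliver this: there is no a priori reason that $C_G(h)$ meets the class of size $\delta$, nor that a $p$-element of $C_G(h)$ has class size exactly $\delta$. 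The paper resolves this by reversing the quantifiers: it fixes $g$ of order $p$ with $|g^G|=\delta$ and proves that $g$ centralizes \emph{all} of $K$. Indeed, by Lemma~\ref{factorKh} the class size of the image $\overline g$ in $S$ divides $\delta$, and it equals $\delta$ (when $\varepsilon=0$) or $\delta/n$ (when $\varepsilon=1$, $S\simeq Alt_{n-1}$, $n-1=2p$); the second alternative forces $|g^K|=n$, while the coprime action of the $p$-element $g$ on $K$ forces each prime part $|n|_t$ of $|g^K|$ to be congruent to $1$ modulo $p$, which is incompatible with $n=2p+1$. Hence $|g^K|=1$, i.e.\ $K\leq C_G(g)$, and then any $h\in K$ with $|h^G|>1$ gives $C_G(gh)=C_G(g)\cap C_G(h)$ properly contained in $C_G(g)$, so $|(gh)^G|$ is a proper multiple of $\delta$. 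This deduction of $K\leq C_G(g)$ from the equality of class sizes in $G$ and $G/K$ is the missing idea; note also that it must precede the elimination of $\varepsilon=1$, so your ordering (first $K=1$, then $\varepsilon=0$) cannot simply borrow it.

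A secondary flaw: you conclude by calling $|(gh)^G|$ a ``$p$-regular class size'' and invoking Lemma~\ref{ab}. That requires $p\nmid|(gh)^G|$, i.e.\ that $C_G(h)$ contains a full Sylow $p$-subgroup of $C_G(g)$, which you have not arranged and which need not hold for an arbitrary $h\in Z(R_0)$. The correct and simpler finish is the one the paper uses: $\delta=n!/2p^2$ is maximal under divisibility in $N(Alt_n)$, since a class size properly divisible by $\delta$ would correspond to a centralizer in $Alt_n$ of order $1$ or $p$, and no such centralizers exist; therefore $\delta\mid|(gh)^G|$ together with $|(gh)^G|\neq\delta$ is already a contradiction, with no reference to $p$-regularity.
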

\begin{proof}
By Lemmas \ref{Order} and \ref{pK}  it follows that $p^2$ divides $|S|$ and, consequently, $\varepsilon\leq1$.
Assume that $K\not=1$. Let $g\in G, |g|=p, |g^G|\delta, \overline{g}=gK/K$. Then $|\overline{g}^S|=\delta$ or
$|\overline{g}^S|=\delta/n$. Suppose that $|\overline{g}^S|=\delta/n$. Then $g$ acts on $K$ non-trivially. Using
\ref{factorKh} we get $|g^K|=n$. It is easy to prove that for every $t\in\pi(n)$ we have $p$ divides $|n|_t-1$. But it is impossible
since $n-1=2p$; a contradiction. Therefore, $|\overline{g}^S|=\delta$, $\varepsilon=0$ and $K<C_G(g)$. Since $Z(G)=1$
there exists an element $h\in K$  with $|h^G|>1$. Hence, $\delta|$ divides $|(hg)^G|$ and $\delta\neq|(hg^G)|$, however
$\delta$ is maximal by divisibility in $N(G)$; a contradiction. Thus, $K=1$ and $S\simeq Alt_n$. The statement of
Lemma follows from Lemma \ref{AnSn}.
\end{proof}

The proposition is proved.
\section{Proof of Proposition \ref{pr2}}
Assume that $q=3$. Let $g\in G,~ |g|=p$ and $\overline{g}\in S$ be the image of $g$ in $S$. By Lemma \ref{factorKh} we have
$|\overline{g}^S|$ divides $|g^G|$ divides $n!/p$. Suppose that $\varepsilon>0$. Then $|g^K|$ divides $\prod_{n-\varepsilon+1}^n i$. It is
easy to prove that $p$ divides $|\prod_{n-\varepsilon+1}^n i|_t-1$ for any $t\in \pi(\prod_{n-\varepsilon+1}^n i)$; a
contradiction. Therefore, $\varepsilon=0$. The statement of Lemma gets similarly as in Lemma \ref{gan}.

\section{Proof of proposition \ref{pr3}}
Let $p>q>3, g\in G, |g|=p$ and $\overline{g}\in S$ the image of $g$ in $S$.

\begin{lem}\label{alpha}
$\alpha=|g^G|=n!/p(n-p)!$.
\end{lem}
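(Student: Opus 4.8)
The plan is to compute $|g^G|$ by first locating $p$, then forcing the shape of $|g^G|$ through Lemma \ref{ConClass}, and finally pinning the centraliser down to that of a single $p$-cycle. First I would check that $p\in\Omega$. Since $p>q>3$ and $p+q\in\{n,n-1\}$ with $p\geq q$, one checks $2p>n$ in both cases (using that $p,q$ are odd), so $n/2<p\leq n$ and $p\in\Omega$. As the interval $\{n-\varepsilon,\dots,n\}$ contains no prime while $p$ is prime and $p\leq n$, we get $p<n-\varepsilon$, so $p\in\pi(S)$ and the image $\overline g$ is a nontrivial $p$-element of $S\simeq Alt_{n-\varepsilon}$. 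Because $2p>n\geq n-\varepsilon$, two disjoint $p$-cycles do not fit, so $\overline g$ is a single $p$-cycle and $|\overline g^{S}|=(n-\varepsilon)!/(p\,(n-\varepsilon-p)!)$; by Lemma \ref{factorKh}(ii) this number divides $|g^G|$.

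Next I would show $p\nmid|g^G|$. Since $2p>n$, the prime $p$ divides $n!=2|Alt_n|$ only to the first power, so no member of $N(G)=N(Alt_n)$ is divisible by $p^2$; that is, $p^2\nmid|x^G|$ for every $x\in G$. By Lemma \ref{vas} the Sylow $p$-subgroups of $G$ are elementary abelian, and as $g$ lies in some (abelian) Sylow $p$-subgroup $Q$ we get $Q\leq C_G(g)$, whence $p\nmid|g^G|$. Now Lemma \ref{ConClass} applies with $t=p$ and gives either $|g^G|=|Alt_n|/(p|C|)$ with $C=C_{Alt_{n-p}}(\cdot)$ (\emph{form one}) or $|g^G|=|Alt_n|/(|Alt_{p+i}|\,|B|)$ with $p+i\leq n$ and $B=C_{Alt_{n-p-i}}(\cdot)$ (\emph{form two}).

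It remains to identify the value $\alpha:=n!/(p\,(n-p)!)=|Alt_n|/(p\,|Alt_{n-p}|)$, the smallest form-one value, attained exactly when $C=Alt_{n-p}$; note that $\alpha$ divides \emph{every} form-one value, since the quotient is a class size in $Alt_{n-p}$. The strategy is to prove $\alpha\mid|g^G|$ and $|g^G|\mid\alpha$. For the first I would exclude form two and the non-maximal form-one centralisers by combining the divisor $|\overline g^{S}|$ with the prime distribution: Lemma \ref{Hanson} forces the products of consecutive integers occurring in $(n-p)!$ and in $n!/(n-\varepsilon)!$ to carry prime divisors large enough that they can only be matched inside the $p$-cycle centraliser. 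For the reverse I would bound $C_G(g)$ through the almost-simple structure $S\trianglelefteq G/K\leq Aut(S)=Sym_{n-\varepsilon}$ (as $n-\varepsilon>6$): since $\overline g$ is a $p$-cycle, $C_{G/K}(\overline g)\leq C_{Sym_{n-\varepsilon}}(\overline g)$ has order $p\,(n-\varepsilon-p)!$, and, establishing $p\nmid|K|$ exactly as in Lemma \ref{pK}, Lemma \ref{factorKh}(iv) gives $|g^G|=|\overline g^{G/K}|\cdot|g^K|$ with $|\overline g^{G/K}|=(n-\varepsilon)!/(p\,(n-\varepsilon-p)!)$. Forcing this product to equal $\alpha$ then compels $\varepsilon=0$ and $|g^K|=1$, i.e.\ $g$ centralises $K$.

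The main obstacle is precisely this last matching step. Unlike the case of a prime adjacent to $n$, form two of Lemma \ref{ConClass} can produce class sizes both above and below $\alpha$, so neither maximality nor minimality alone settles the value; one must feed in the exact divisor $|\overline g^{S}|$ together with the prime-distribution input of Lemma \ref{Hanson} and the hypothesis that $n$ or $n-1$ splits as $p+q$ (so that $n-p\in\{q,q+1\}$ has a tightly controlled factorial) to exclude the competing forms and simultaneously show that the kernel contributes nothing. I expect establishing $p\nmid|K|$ and $\varepsilon=0$ inside this argument to be the delicate point.
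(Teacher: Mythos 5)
There is a genuine gap, and it sits exactly where you flag "the main obstacle." Your setup (locating $p\in\Omega$, $\overline g$ a single $p$-cycle, $p\nmid|g^G|$ via Lemma \ref{vas}, then Lemma \ref{ConClass}) matches the paper, and your observation that $\alpha=n!/p(n-p)!$ divides every form-one value is correct. But the entire content of the lemma is the reverse inequality: among the form-one values $n!/p|C|$, you must show $|C|$ is \emph{maximal}, i.e.\ $C=C_{Alt_{n-p}}(1)$. Your proposed mechanism cannot deliver this: bounding $C_{G/K}(\overline g)$ from above by $C_{Sym_{n-\varepsilon}}(\overline g)$ gives a \emph{lower} bound on $|\overline g^{G/K}|$ and hence on $|g^G|$, whereas $\alpha$ is the \emph{smallest} admissible value, so what is needed is an \emph{upper} bound on $|g^G|$ (a lower bound on $|C_G(g)|$). "Forcing the product to equal $\alpha$" is circular — you would be assuming the conclusion to deduce $\varepsilon=0$ and $|g^K|=1$, which in the paper are separate later lemmas that \emph{use} Lemma \ref{alpha} as input. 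Your appeal to Lemma \ref{Hanson} for excluding form two is also only a hope, not an argument; the paper instead gets form one from $\Omega\setminus\{p\}\subset\pi(|\overline g^S|)\subset\pi(\alpha)$, which you essentially have but do not exploit.

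The missing idea is the paper's second paragraph, and it is precisely where the Goldbach-type hypothesis enters. Assume $|C|<(n-p)!$. Take $h\in C_G(g)$ with $|h^G|=n!/pq$ (this class exists because $n$ or $n-1$ equals $p+q$, giving a $(p,q)$-cycle type with centralizer of order $pq$; it can be placed in $C_G(g)$ since $|G|_p=p$). If $p\in\pi(|h|)$, then $\alpha$ divides $|h^G|$, forcing $q\mid|C|$ and hence $\alpha=n!/pq$; one then picks $x\in C_G(g)$ with $|x^G|=n!/pa$, $1<a\neq q$, shows $p\notin\pi(|x|)$, and gets that both $|x^G|$ and $\alpha$ divide $|(xg)^G|$ by Lemma \ref{factorKh}(iii), contradicting the maximality of $\alpha=n!/pq$ by divisibility in $N(G)$. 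So $p\notin\pi(|h|)$ and $\alpha\neq|h^G|$; then since $n!/pq$ is maximal by divisibility, $|(gh)^G|=|h^G|$, while $\alpha$ also divides $|(gh)^G|$, forcing $\alpha\mid n!/pq$ and hence $\alpha=n!/pq$ after all — a contradiction. This arithmetic pincer on products $gh$, $gx$ with coprime commuting factors, rather than any structural bound on centralizers, is what pins $|C|=(n-p)!$; without it (or an equivalent), your proof does not close.
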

\begin{proof}
We have $\Omega\setminus\{p\}\subset\pi(\overline{g}^S)$. Using Lemma \ref{factorKh} we get
$\Omega\setminus\{p\}\subset \pi(\alpha)$. By Lemma \ref{ConClass} it follows that $\alpha=n!/p|C|$ where $C=C_{Alt_{n-p}}(h)$ for an any element $h\in Alt_{n-p}$.

Assume that $|C|<(n-p)!$. Let $h\in G$ such that $|h^G|=n!/pq$. Since $|G|_p=p$  we can assume that $h\in C_G(g)$.
If $p\in\pi(h)$, then $\alpha$ divides $|h^G|$ and, consequently, $q$ divides $|C|$. Therefore, $\alpha=n!/pq$.
Let $x\in C_G(g)$ such that $|x^G|=n!/pa, 1<a\neq q$. Since $\alpha\nmid|x^G|$ and by Lemma \ref{factorKh}, we obtain that
$p\not\in\pi(|x|)$. Hence, $|x^G|$ and $\alpha$ divide $|(xg)^G|$, but $\alpha$ is maximal by divisibility in $N(G)$; a
contradiction. Therefore, $p\not\in\pi(h)$ and we can assume that $\alpha\neq |h^G|$. Since $|h^G|$ is maximal by
divisibility in $N(G)$ we obtain that $|(gh)^G|=|h^G|$. Therefore, $\alpha$ divides $|(gh)^G|$. Thus, $\alpha=n!/p(n-p)!$.
\end{proof}

Let $C=C_G(g)$, $\alpha=|g^G|$, $\gamma=n!/pq$, $\Theta=\{n!/pb, b|(n-p)!\}\cap N(G)$ and $Q\in Syl_q(C)$.

\begin{lem}\label{vse}
The following statements hold
\begin{enumerate}
\item{$Q\not\leq Z(C)$;}
\item{If $v\in Q\setminus Z(C)$ then $|(gv)^G|=\gamma$;}
\item{$|Q|/|Q\cap Z(C)|=q$;}
\item{If $v\in Q\setminus Z(C)$ and $f$ is a $\{p,q\}'$-element of $C_G(gv)\setminus Z(C)$, then $|(gf)^G|=\gamma$.}
\end{enumerate}
\end{lem}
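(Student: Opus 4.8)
The common engine for all four parts is the identity $C_G(gx)=C_C(x)$ for any $\{p\}'$-element $x\in C$, which follows from Lemma~\ref{factorKh}(iii) since $x$ commutes with $g$ and $(p,|x|)=1$; consequently $|(gx)^G|=\alpha\,|x^C|$. Because $g\in C_C(x)$ and $|G|_p=p$, every such class size is coprime to $p$, and because $C_C(x)\le C$ it is divisible by $\alpha$. I would first run Lemma~\ref{ConClass} with $t=p$ on these sizes and discard its second alternative: $|Alt_{p+i}|\ge p!/2$ cannot divide $p(n-p)!$, because $n-p\in\{q,q+1\}$ together with $p>q$ forces $(p-1)!/2>2(n-p)!$. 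Hence every $|(gx)^G|$ lies in $\Theta$, i.e. one has $\Theta=\alpha\cdot N(Alt_{n-p})$ in the sense that $|(gx)^G|=\alpha\,c$ with $c=|x^C|$ a class size of $Alt_{n-p}$. This reduces the whole lemma to reading off which classes of $Alt_{n-p}$ can occur.

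For the structural statements (1) and (3) the clean case is $q\nmid|K|$: taking $\bar g\in S\simeq Alt_{n-\varepsilon}$ to be the image of $g$, Lemma~\ref{factorKh}(iv) identifies $C_{G/K}(\bar g)$ with $CK/K$, and $Q$ embeds in a Sylow $q$-subgroup of $C_S(\bar g)=\langle\bar g\rangle\times Alt_{n-\varepsilon-p}$, which has order exactly $q$ and is generated by a non-central $q$-cycle. This forces $|Q|=q$ and $Q\cap Z(C)=1$, giving $Q\not\le Z(C)$ and $|Q|/|Q\cap Z(C)|=q$ at once. The point where I expect real work is the complementary case $q\mid|K|$ (note $q=n-p<n/2$, so $q\notin\Omega$ and Lemma~\ref{Omega} gives nothing here): there I would use that every $p'$-class size of $Alt_n$ has $q$-part at most $n!_q=q\cdot\alpha_q$, so $|x^C|_q\le q$, equivalently $|Q:C_Q(x)|\le q$, for all $\{p\}'$-elements $x$; this bounds the non-central part of $Q$ and, together with a lifting of a non-central $q$-element through $K$ in the style of Lemmas~\ref{Pord} and \ref{kant2}, should again pin $|Q:Q\cap Z(C)|=q$.

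Statements (2) and (4) then isolate the single value $\gamma$. For $v\in Q\setminus Z(C)$ we have $|v^C|>1$, so $|(gv)^G|=\alpha\,|v^C|$ is a nontrivial member of $\Theta$; since $v$ is a $q$-element and $(n-p)!$ has $q$-part exactly $q$, the only nontrivial $q$-elements of $Alt_{n-p}$ are $q$-cycles, which realise a single class size, and matching $|v^C|$ to it (via the $q$-part of $|(gv)^G|$ together with the divisibility $|\overline{(gv)}^{S}|\mid|(gv)^G|$ whenever the $q$-cycle survives in $S$) yields $|(gv)^G|=\gamma$. For (4) one takes a $\{p,q\}'$-element $f$ in $C_G(gv)\setminus Z(C)$; then $f$ commutes with the fixed $q$-cycle datum carried by $v$, so $gf$ realises the same complementary class of $Alt_{n-p}$, and the same analysis gives $|(gf)^G|=\gamma$.

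The main obstacle, running through (1)--(4), is the case $q\mid|K|$ (and, relatedly, the possibility $\varepsilon\ge 2$, in which $Alt_{n-\varepsilon-p}$ no longer contains a $q$-cycle, so the image in $S$ carries no $q$-information): in these situations the $q$-elements live inside the kernel, the convenient reading-off from $C_S(\bar g)$ is unavailable, and the exact value $\gamma$ and the index $q$ must be extracted purely from the rigid arithmetic of $\Theta$, i.e. from the divisors of $(n-p)!\in\{q!,(q+1)!\}$ with prescribed $q$-part, combined with the maximality of $\alpha$ under divisibility in $N(G)$. I expect this arithmetic bookkeeping, rather than any single structural idea, to be where the argument is genuinely delicate.
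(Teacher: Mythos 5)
Your reduction $C_G(gx)=C_C(x)$ and hence $|(gx)^G|=\alpha|x^C|$ for $p'$-elements $x\in C$ is sound and is implicitly the paper's setup too, but the core of parts (1), (2) and (4) is left open in exactly the cases that matter. For (1) and (3) your clean case requires both $q\nmid|K|$ and that $C_S(\overline{g})$ contain a $q$-cycle; neither is available here. Since $q=n-p$ or $n-p-1$ is less than $n/2$, $q\notin\Omega$ and Lemma~\ref{Omega} says nothing about $q\mid|K|$; and $\varepsilon=0$ is only proved \emph{later}, using Lemma~\ref{vse} itself (via Lemmas~\ref{epsi5} and~\ref{epsi4}), so you may not assume $Alt_{n-\varepsilon-p}$ has nontrivial Sylow $q$-subgroup --- indeed if $n-p=q$ and $\varepsilon\geq1$ it does not. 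You acknowledge these cases but only say the argument ``should again pin'' the index; that is precisely the content of the lemma. The paper avoids the dichotomy altogether: since $n-p>3$ there is $\beta\in\Theta$ with $\beta_q>\alpha_q$ (e.g.\ the value corresponding to a $p$-cycle times a $3$-cycle), realized by some $h\in C$ because $p\mid|C_G(h)|$ and all subgroups of order $p$ are conjugate; then $Z(C)\cap Q\leq C_G(gh)\leq C$ with $|C_G(gh)|_q<|C|_q$ forces $Q\not\leq Z(C)$, and (3) falls out of (2) by the same comparison.

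For (2) the gap is more substantive: the bijection between $\Theta$ and class sizes of $Alt_{n-p}$ is purely numerical, so nothing forces a $q$-element $v$ of $C$ to realize the value attached to a $q$-cycle of $Alt_{n-p}$. The honest constraint you can extract is $q\nmid|v^C|$, i.e.\ $|(gv)^G|_q=\alpha_q$, which only narrows $\Theta$ to the values $n!/pb$ with $q\mid b$; when $n-p=q+1$ there are several such $b$ (e.g.\ $b=q$ and $b=2q$), so the $q$-part does not isolate $\gamma$, and the auxiliary divisibility through $S$ again presupposes the clean case. The paper's argument is of a different kind: assuming no $q$-element of $C\setminus Z(C)$ realizes $\gamma$, it picks $b\in C$ with class size $\gamma$ centralizing a Sylow $q$-subgroup of $C$, so that $r\in C_G(b)$, and then both $\gamma$ and $|(gr)^G|$ divide $|(grb)^G|$, contradicting maximality by divisibility; a second commuting-element step with a $\{p,q\}'$-element of $C_G(gr'')$ excludes a second value on $Q\setminus Z(C)$, and it is this step (not a cycle-type correspondence) that also yields (4), which in your proposal is asserted rather than proved. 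In short, the structural picture you rely on is the conclusion of the section, not an available hypothesis, and the arithmetic of $\Theta$ alone does not close parts (1), (2), (4) without the commuting-element arguments.
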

\begin{proof}
1. By Proposition \ref{pr2} it follows that $n-p>3$. Therefore, there exists $\beta\in\Theta$ such that $\beta_q>\alpha_q$.
Let $h\in C$ and $|h^G|=\beta$. Since $C_G(gh)\leq C, |C_G(gh)|_q<|C|_q$ and by Lemma \ref{factorKh}, we obtain that there
exists $r\in Q\setminus Z(C)$.

2. We have $|(gr)^G|\neq \alpha$. Assume that for any $l\in G$ such that $|(lg)^G|=\gamma$, where $l\in C_G(g)$ and
$\pi(l)=\{t\}$, we have $t\neq q$. Let $b\in C$ and $|b^G|=\gamma$. Since $b$ centralizes some Sylow $q$-subgroup of $C$,
we can assume that $r\in C_G(b)$. Therefore, $\gamma$ divides $|(grb)^G|$ and $|(gr)^G||||(grb)^G|$; a contradiction. Thus,
there exists $r'\in Q$ such that $|(gr')^G|=\gamma$.

Suppose that there exists $r''\in Q\setminus Z(C)$ such that $|(r''g)^G|=\delta\neq \gamma$. Then there exists
$\{p,q\}'$-element $a\in C_G(gr'')$. It is obvious that $|(ga)^G|\neq \gamma$. Therefore, $r'\not \in C_G(ga)$ and
$|(gar'')^G|_q>|(gr'')^G|_q$; a contradiction.

3. Let $v\in C$ and $|v^G|=\beta$. Since $|(gb)^G|=\gamma$ for any $b\in Q\setminus Z(C)$, we see that
$C_G(gv)\cap Q = Z(C)\cap Q$, and, consequently, $|Q|/|Z(C)\cap Q|=q$.

\end{proof}

\begin{lem}\label{epsi5}
$n-p-\varepsilon<5$
\end{lem}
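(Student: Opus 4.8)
The plan is to convert the inequality $n-p-\varepsilon<5$ into a divisibility condition on products of consecutive integers, by reading off the class size of the image $\overline g$ inside $S\simeq Alt_{n-\varepsilon}$, and then to contradict that condition with Hanson's lemma (Lemma \ref{Hanson}). First I would locate $\overline g$: since $p>q>3$ and $p+q\in\{n-1,n\}$ we have $2p>n\geq n-\varepsilon$, so an element of order $p$ in $Alt_{n-\varepsilon}$ is necessarily a single $p$-cycle. Writing $m=n-\varepsilon$ and $f=m-p=n-p-\varepsilon$ for its number of fixed points, the centralizer of a $p$-cycle in $Alt_m$ is $\langle\overline g\rangle\times Alt_f$ of order $p\,f!/2$, so $|\overline g^{S}|=m!/(p\,f!)$. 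By Lemma \ref{factorKh} this divides $|g^{G}|=\alpha=n!/(p(n-p)!)$ (Lemma \ref{alpha}), whence $m!/f!$ divides $n!/(n-p)!$.

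Both sides are products of $p$ consecutive integers, over the ranges $[f+1,m]$ and $[n-p+1,n]$. Since $n-p=f+\varepsilon$ and $m=f+p$, these ranges share the block $[f+\varepsilon+1,\,f+p]$, which cancels, so the divisibility is equivalent to
\[
\prod_{j=f+1}^{f+\varepsilon}j \ \Big|\ \prod_{j=n-\varepsilon+1}^{n}j,
\]
a divisibility between two products of $\varepsilon$ consecutive integers, the lower block sitting around $f$ and the upper block sitting just below $n$. Assuming for contradiction that $f\geq5$, I would feed the lower block into Hanson's lemma: a product of $\varepsilon$ consecutive integers all exceeding $\varepsilon$ has a prime divisor $t>3\varepsilon/2$ (away from the three listed exceptional products). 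As $t>\varepsilon$, each block of $\varepsilon$ consecutive integers contains at most one multiple of $t$, so $t$ divides the lower block exactly once and the displayed divisibility forces a multiple of $t$ into $(n-\varepsilon,n]$, i.e. $n\bmod t<\varepsilon$. Writing $t\mid j_0$ for the unique $j_0\in[f+1,f+\varepsilon]$ and using $n=j_0+(n-j_0)$ with $n-j_0\in[p,p+\varepsilon-1]$ reduces this to a congruence condition on $p$ modulo $t$, which I would aim to contradict.

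The hard part is that this single divisibility is by itself insufficient: small configurations (for instance $f=5$, $\varepsilon=2$ with a suitable $p\equiv1\pmod7$, $p\equiv2\pmod3$) do satisfy it, so the prime produced by Hanson need not, on its own, be misplaced. To close the gap I would bring in a second, independent constraint coming from the class size $\gamma=n!/pq$ and the Sylow structure of $C_G(g)$ recorded in Lemma \ref{vse}: examining the image in $S$ of an element $gv$ of order $pq$ with $v\in Q$, counting its fixed points, and using $|\overline{(gv)}^{S}|\mid\gamma$ yields a companion condition on the block $[f+1,f+\varepsilon]$ (here Lemma \ref{vac2} and $q<p$ control how $v$ can act). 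The crux of the argument is to run Hanson's lemma against the conjunction of the two divisibilities, so that the large prime $t$ is forced simultaneously into incompatible residue classes modulo $t$; the remaining labour is the bookkeeping of $t$-adic multiplicities together with a separate treatment of Hanson's exceptional products $3\cdot4$, $8\cdot9$, $6\cdot7\cdot8\cdot9\cdot10$ and of the small cases in which the lower block fails to exceed $\varepsilon$.
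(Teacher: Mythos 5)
Your reduction is fine as far as it goes: $\overline g$ is indeed a single $p$-cycle in $S$ (since $2p>p+q\geq n-1$), the computation $|\overline g^{S}|=m!/(p\,f!)$ is correct, and Lemma \ref{factorKh} does give the divisibility $\prod_{j=f+1}^{f+\varepsilon}j\mid\prod_{j=n-\varepsilon+1}^{n}j$. But the proof stops there. You concede yourself that this divisibility is consistent with $f\geq 5$ (your $f=5$, $\varepsilon=2$, $q=7$, $p\equiv1\ (\mathrm{mod}\ 7)$, $p\equiv2\ (\mathrm{mod}\ 3)$ configuration really does satisfy it), and the ``second, independent constraint'' that is supposed to rescue the argument is never derived: you do not exhibit the element $gv$, compute its class size in $S$, or show what divisibility it imposes, let alone verify that the two conditions jointly exclude $f\geq5$. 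Note also that the constraint degenerates exactly where it is needed least and most: for $\varepsilon=0$ both products are empty, and for $\varepsilon=1$ the condition $(f+1)\mid n$ constrains $\varepsilon$ rather than $f$. So this is a plan with an acknowledged hole, not a proof.

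The deeper problem is that the threshold $5$ in this lemma is not arithmetic, so Hanson's lemma (which the paper does use, but later, for Lemma \ref{Pi} and Lemma \ref{epsilon}) is the wrong tool here. In the paper, $5$ enters because $Alt_5$ is the smallest \emph{perfect} alternating group. The actual proof assumes $f=n-p-\varepsilon\geq5$, uses the Frattini argument to produce a subgroup $H\leq N_C(Q)$ with $H/(K\cap H)\simeq Alt_5$, and observes that since $|\overline Q|=q$ by Lemma \ref{vse}(3) and $Aut(\overline Q)$ is cyclic of order $q-1$, the perfect group $H$ must centralize $\overline Q$. Lemma \ref{vse}(4) then forces $|(gh_1)^G|=|(gh_2)^G|=\gamma=n!/pq$ for elements $h_1,h_2\in H\setminus Z(C)$ of orders $2$ and $3$, whence $C_G(gh_1)=C_G(gh_2)$ is a centralizer of order divisible only by $p$ and $q$ yet containing elements of orders $2$ and $3$ --- a contradiction. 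Your divisibility observation could at best supplement this; it cannot replace the group-theoretic mechanism that actually produces the bound.
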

\begin{proof} Assume that $n-p-\varepsilon\geq5$.
From Frattini argument and by Lemma \ref{vse} it follows that there exists a subgroup $H$ in $N_C(Q)$ such that
$H/K\cap H\simeq Alt_{5}$. Let $\overline{N}=N_C(Q)/Z(Q)\cap Z(C) and \overline{Q}=Q/Z(Q)\cap Z(C)$. Note that by
Lemma \ref{vse} we have $|\overline{Q}|=q$. Therefore, $\overline{N}/C_{\overline{N}}(\overline{Q})$ is isomorphic to a
cyclic subgroup of order dividing $q-1$ in $Aut(\overline{Q})$. Hence, $C_{\overline{N}}(\overline{Q})\geq H$. Let
$h_1,h_2\in H\setminus Z(C), |h_1|=2$ and $|h_2|=3$. Since $|(gh_1)^G|=|(gh_2)^G|=\gamma$, we see that
$C_G(gh_1)=C_G(gh_2)$; a contradiction.
\end{proof}

\begin{lem}\label{epsi4}
If $n-p-\varepsilon=4$, then $3$ divides $q-1$.
\end{lem}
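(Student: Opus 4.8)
The plan is to imitate the proof of Lemma \ref{epsi5}, exploiting the single structural difference between the two situations: $Alt_4$, unlike $Alt_5$, has a nontrivial cyclic quotient. First I would reproduce the Frattini-type step. Since $n-p-\varepsilon=4$, the section of $S\simeq Alt_{n-\varepsilon}$ available inside $C_S(\overline{g})$ is now $Alt_{n-\varepsilon-p}\simeq Alt_4$, so the Frattini argument together with Lemma \ref{vse} should produce a subgroup $H$ of $N_C(Q)$ with $H/(K\cap H)\simeq Alt_4$. I would then pass to $\overline{N}=N_C(Q)/(Z(Q)\cap Z(C))$ and $\overline{Q}=Q/(Z(Q)\cap Z(C))$; by Lemma \ref{vse} we have $|\overline{Q}|=q$, and hence $\overline{N}/C_{\overline{N}}(\overline{Q})$ embeds in $Aut(\overline{Q})\simeq C_{q-1}$, a cyclic group of order $q-1$.

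The decisive point is the image of $H$ in this cyclic group. Because $H/(K\cap H)\simeq Alt_4$ has derived subgroup the Klein four-group and abelianization $C_3$, the only cyclic quotients of $H/(K\cap H)$ are the trivial group and $C_3$; consequently the image of $H$ in $C_{q-1}$ is either trivial or cyclic of order $3$. In the latter case a subgroup of order $3$ sits inside $C_{q-1}$, which forces $3\mid q-1$, and the lemma is proved. So everything reduces to ruling out the possibility that the image is trivial, i.e. that $H\leq C_{\overline{N}}(\overline{Q})$.

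To eliminate the trivial-image case I would reproduce the final contradiction of Lemma \ref{epsi5} verbatim. Choosing $h_1,h_2\in H\setminus Z(C)$ with $|h_1|=2$ and $|h_2|=3$ — which exist since $Alt_4$ contains both a double transposition and a $3$-cycle, chosen with non-commuting images — the hypothesis $H\leq C_{\overline{N}}(\overline{Q})$ together with Lemma \ref{vse} forces $|(gh_1)^G|=|(gh_2)^G|=\gamma$ and then $C_G(gh_1)=C_G(gh_2)$, which is exactly the contradiction reached in Lemma \ref{epsi5}. Hence the image of $H$ in $C_{q-1}$ cannot be trivial, and therefore $3\mid q-1$.

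I expect the main obstacle to be the bookkeeping of the first step: checking that the Frattini argument and Lemma \ref{vse} still deliver a section isomorphic to $Alt_4$ (and not merely a homomorphic image from which the $C_3$ quotient has already collapsed into $K\cap H$), and confirming that $h_1,h_2$ can be selected outside $Z(C)$ with the right orders and non-commuting images so that Lemma \ref{vse} applies and the epsi5 contradiction goes through. Once the $Alt_4$-section is secured, the arithmetic of cyclic quotients makes the dichotomy ``$3\mid q-1$ or contradiction'' essentially automatic, since the abelianization of $Alt_4$ is precisely $C_3$.
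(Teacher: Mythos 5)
Your proposal follows the paper's own proof essentially step for step: the same Frattini argument producing $H<N_C(Q)$ with $H/(K\cap H)\simeq Alt_4$, the same passage to $\overline{N}/C_{\overline{N}}(\overline{Q})$ cyclic of order dividing $q-1$, the same use of the Lemma~\ref{epsi5} contradiction to rule out $H\leq C_{\overline{N}}(\overline{Q})$, and the same conclusion from the fact that the only nontrivial cyclic quotient of $Alt_4$ has order $3$. This matches the paper's argument, so no further comparison is needed.
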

\begin{proof} Assume that $n-p-\varepsilon=4$.
By Frattini argument and Lemma \ref{vse} it follows that there exists a subgroup $H<N_C(Q)$ such that
$H/K\cap H\simeq Alt_{4}$. Let $\overline{N}=N_C(Q)/Z(Q)\cap Z(C)$ and $\overline{Q}=Q/Z(Q)\cap Z(C)$. Note that by Lemma
\ref{vse} we have $|\overline{Q}|=q$. Therefore, $\overline{N}/C_{\overline{N}}(\overline{Q})$ is isomorphic to a cyclic
subgroup of order dividing $q-1$ in $Aut(\overline{Q})$. If $C_{\overline{N}}(\overline{Q})\geq H$, then similarly as in
Lemma \ref{epsi5} we obtain a contradiction. Thus, $|H/C_{\overline{N}}(\overline{Q})\cap H|=3$.
\end{proof}

\begin{lem}\label{Pi}
The number $\Pi=\prod_{n-\varepsilon+1}^{n} i$ is not divisible by primes which are larger than $q$.
\end{lem}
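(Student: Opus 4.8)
The plan is to argue by contradiction: suppose a prime $r>q$ divides $\Pi=\prod_{i=n-\varepsilon+1}^{n} i$. Then $r$ divides some $i$ with $n-\varepsilon+1\le i\le n$, and since $\{n-\varepsilon,\dots,n\}$ contains no primes, $i$ is composite, say $i=rc$ with $c\ge 2$. First I would extract the arithmetic this forces. From $i=rc\le n$ and $c\ge 2$ we get $r\le n/2$, hence $r\notin\Omega$; from $p>n/2$ we get $r<p$; and $q<r$ by assumption, so $q<r<p$. Because $i>n-\varepsilon=p+(n-p-\varepsilon)$ with $n-p-\varepsilon\le 4$ by Lemmas \ref{epsi5} and \ref{epsi4}, the integer $i$ sits just above $p$, which confines $c$ to the values $c=2$ and $c=3$ and (since $r^{2}>n$ for $n>26$) gives $(n!)_r=r^{2}$ when $c=2$ and $(n!)_r=r^{3}$ when $c=3$. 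Two further facts drive everything: since $q=n-p$ or $q=n-1-p$ we have $p+r>n$ (with the borderline $p+r=n$ possible only when $n-1=p+q$ and $r=n-p$), so $Alt_n$ has no element of order divisible by $pr$; and in the principal case $c=2$ one has $m/2<r\le m$ with $m=n-\varepsilon$, so $r\,\|\,|S|$, i.e. $r$ is an $\Omega$-type prime for the composition factor $S\simeq Alt_m$. Finally I would locate $r$ in $K$: since $S\le G/K\le \mathrm{Aut}(S)$ and $\mathrm{Out}(Alt_m)$ is a $2$-group for $m\ge 27$, we have $|G/K|=e\,|S|$ with $e\mid 2$; by Lemma \ref{Order}, $|Alt_n|=\Pi\,|S|$ divides $|G|=|K|\,e\,|S|$, so $\Pi\mid e\,|K|$, and as $r$ is odd this yields $r\in\pi(K)$. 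This is precisely where Lemma \ref{Omega} is not enough, since that lemma forbids only primes of $\Omega$, while our $r$ lies below $n/2$.

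The heart of the argument is to convert $r\in\pi(K)$ into a contradiction. Arguing as in Lemma \ref{pK} one first shows $p\nmid|K|$, so $g$ (of order $p$) acts coprimely on $K$. Because $p>r$, we have $p\nmid r^{k}-1$ for the small exponents $k$ that occur for an $r$-section of bounded rank, so the coprime action of $g$ cannot be fixed-point-free on such a section; hence (using Lemma \ref{Gore} on the relevant abelian $r$-factor) $g$ centralizes a nontrivial $r$-element $h\in K$. Since $\alpha$ is maximal by divisibility in $N(G)$ and $\alpha\mid|(gh)^{G}|$ by Lemma \ref{factorKh}(iii), one forces $|(gh)^{G}|=\alpha$, whence $h\in Z(C)$ with $C=C_G(g)$ and $\langle g,h\rangle\cong C_p\times C_r$ lies in $Z(C)$. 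Combining $h$ with the $q$-element $v\in Q\setminus Z(C)$ of Lemma \ref{vse} produces a class size $|(hv)^{G}|$ that is $p$-regular (as $g\in C_G(hv)$); applying Lemma \ref{ConClass} to this $p$-regular size then pins its shape and contradicts the $r$- or $q$-part dictated by $p+r>n$. Alternatively, one may run the minimal-normal-subgroup scheme of Lemma \ref{Omega}, splitting the non-solvable and elementary abelian cases with Lemmas \ref{Pord}, \ref{kant2}, \ref{gfactor}, \ref{vas2}, \ref{vac2}, to exhibit two commuting sources of $r$, one from $K$ and one from $S$ (where $r\,\|\,|S|$ supplies an element whose class size has $r$-part exactly $r$), and thereby force a class size whose $r$-part is incompatible with the centralizer structure allowed by $N(Alt_n)$.

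The main obstacle, in contrast with Lemma \ref{Omega}, is that $r\le n/2$ makes $r^{2}\mid n!$, so the crude bound ``$r^{2}\nmid$ every class size'' is unavailable and one must track the exact value of $(n!)_r$ (namely $r^{2}$ if $c=2$ and $r^{3}$ if $c=3$). The delicate point is controlling $|K|_r$: a Sylow $r$-subgroup of $K$ of large rank would, by Fermat, admit a fixed-point-free action of $g$ (through $p\mid r^{p-1}-1$), breaking the coprime-fixed-point step, so one must rule out high rank by bounding the $r$-part of $|G|$ against the class-size spectrum of $Alt_n$ at $r$ (here Lemma \ref{vas} and the explicit $r$-parts of $Alt_n$-classes enter). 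I expect the case $c=3$, in which $r$ is no longer an $\Omega$-prime for $S$ and $r^{2}\,\|\,|S|$ instead, together with the borderline $p+r=n$ arising when $n-1=p+q$, to be the most demanding part of the analysis, and I would treat the two splittings $c=2,3$ and $n=p+q$, $n-1=p+q$ separately.
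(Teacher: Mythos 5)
Your strategy diverges from the paper's at the decisive point and leaves that point unproved. The paper's proof rests on an exact computation you never make: since $p\nmid|K|$, Lemma \ref{factorKh}(iv) gives $|g^G|=|g^K|\cdot|\overline{g}^{G/K}|$, and with $|g^G|=\alpha=n!/p(n-p)!$ (Lemma \ref{alpha}) and $|\overline{g}^{G/K}|$ determined by $S\simeq Alt_{n-\varepsilon}$, one gets $|g^K|=\Pi/\prod_{n-p-\varepsilon+1}^{n-p}i$ exactly. Because $t>q$ can divide only one member of $\{n-\varepsilon+1,\dots,n\}$, this yields $t\mid|g^K|$ with $|g^K|_t=|\Pi|_t$ and $|\Pi|_t-1<p$. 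Passing to a section $N=N_1\times\dots\times N_k$ (a minimal normal subgroup of a suitable quotient of $K$) on which $t$ still divides $|\widetilde g^N|$, the coprime action of the $p$-element $g$ forces $|\widetilde g^N|_t\geq p+1$, a contradiction that works \emph{regardless of the rank} of the section. Your substitute for this step is to produce an $r$-element of $C_K(g)$ by arguing that $g$ cannot act fixed-point-freely on the relevant $r$-sections, and you yourself flag that a section of large rank would admit such an action (via $p\mid r^{p-1}-1$). That gap is real, and the repair you propose --- bounding the rank by bounding the $r$-part of $|G|$ --- cannot close it: for $r$ only slightly larger than $q\geq 5$ the $r$-part of $n!$, hence potentially of $|G|$ and of $|K|$, is enormous, so nothing controls the rank of an $r$-chief factor of $K$. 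The paper sidesteps this entirely by bounding the \emph{class size} $|g^K|$ at $t$ rather than the group order $|K|_t$; that is the missing idea.

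Two further problems. The claim that $c=i/r$ is confined to $\{2,3\}$ is unjustified: $r$ is only known to exceed $q$, and a composite $i$ with $p<i\leq n$ may be a product of several primes each larger than $q$; consequently the case analysis built on $(n!)_r\in\{r^2,r^3\}$ and on ``$r\,\|\,|S|$'' does not cover all cases. And the endgame you sketch (forcing $h\in Z(C)$, then combining with $v\in Q\setminus Z(C)$ and invoking Lemma \ref{ConClass}) is not carried out: in the second alternative of Lemma \ref{ConClass} the class size $n!/|Alt_{p+i}||B|$ need not have full $r$-part, so the contradiction you anticipate does not follow as stated. As written, the proposal establishes only the easy facts ($q<r<p$, $r\in\pi(K)$) and leaves the core of the lemma open.
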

\begin{proof}
Suppose that there exists a prime $t$ dividing $\Pi$ and $t>q$. Then $t$ divides only one number from the set
$\{n-\varepsilon+1,...,n\}$. In particular, $|\Pi|_t-1<p$. By Lemma \ref{alpha} it follows that $|g^G|=\alpha$ and
$|\overline{g}^{G/K}|=(n-\varepsilon)!/(n-\varepsilon-p)!$. Since $K$ is not divisible by $p$, we obtain
$|g^K|=\Pi/(\prod_{n-p-\varepsilon+1}^{n-p}i)$. Therefore, $t$ divides $|g^K|$. Let $H$ be a maximal normal subgroup in $K$
such that $t$ divides $|\widetilde{g}^{\widetilde{K}}|$, where $\widetilde{K}=K/H$ and $\widetilde{g}=gH\in G/H$. Let
$N$ be a minimal normal subgroup of $\widetilde{K}$. By maximality of $H$, $t$ divides $|\widetilde{g}^N|$. We have
$N\simeq N_1\times...\times N_k$, where $N_1,..., N_k$ are isomorphic simple groups. It is easy to prove that
$|\widetilde{g}^N|_t\geq p+1$; a contradiction.
\end{proof}
\begin{lem}\label{epsilon}
If  $\varepsilon>0$ then $\varepsilon\leq4, q\leq7$ and if $n-\varepsilon-p=2$ then $n-p\leq5$.
\end{lem}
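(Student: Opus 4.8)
The plan is to confront the product $\Pi=\prod_{i=n-\varepsilon+1}^{n}i$ of $\varepsilon$ consecutive integers with the two pieces of arithmetic already at hand: the divisibility restriction of Lemma \ref{Pi} (no prime factor of $\Pi$ exceeds $q$) and Hanson's theorem (Lemma \ref{Hanson}). First I would record the elementary relations tying $\varepsilon$ to $q$. Since $n$ or $n-1$ equals $p+q$ and $p>q$, we have $n-p\in\{q,q+1\}$; moreover $p\in\pi(S)=\pi(Alt_{n-\varepsilon})$ forces $p\leq n-\varepsilon$, i.e. $n-p-\varepsilon\geq0$, so $\varepsilon\leq n-p\leq q+1$. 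Combining Lemma \ref{epsi5}, which gives $n-p-\varepsilon\leq4$, with $n-p\geq q$, I obtain the lower bound $\varepsilon\geq q-4$. A short side computation, using that $p\geq q+2$ because two odd primes cannot differ by $1$, shows $n\geq 2q+2$, hence the smallest factor $n-\varepsilon+1\geq q+2>\varepsilon$ of $\Pi$ exceeds $\varepsilon$, as Hanson requires.

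Next I would apply Hanson's theorem to $\Pi$. Because $n>26$, every factor of $\Pi$ is at least $n-\varepsilon+1\geq q+2\geq 7$, so the largest factor $n$ exceeds $10$ and $\Pi$ is none of the three exceptional products listed in Lemma \ref{Hanson}; hence $\Pi$ has a prime divisor greater than $3\varepsilon/2$. By Lemma \ref{Pi} that prime is at most $q$, so $3\varepsilon/2<q$, i.e. $\varepsilon<2q/3$. Feeding in $\varepsilon\geq q-4$ yields $3(q-4)<2q$, whence $q<12$ and, $q$ being an odd prime greater than $3$, $q\in\{5,7,11\}$.

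The main obstacle is eliminating the boundary value $q=11$. For $q=11$ the two inequalities force $\varepsilon\geq7$ and $\varepsilon<22/3$, so $\varepsilon=7$; then $n-p-\varepsilon\leq4$ together with $n-p\in\{11,12\}$ leaves only $n-p=11$ and $n-p-\varepsilon=4$, since $n-p=12$ would give $n-p-\varepsilon=5$, against Lemma \ref{epsi5}. But the case $n-p-\varepsilon=4$ invokes Lemma \ref{epsi4}, which demands $3\mid q-1=10$ --- false. Thus $q\in\{5,7\}$, giving $q\leq7$, and then $\varepsilon<2q/3\leq14/3$ gives $\varepsilon\leq4$.

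Finally I would settle the conditional clause. Suppose $n-p-\varepsilon=2$, so that $n-p=\varepsilon+2$. If $q=7$ then $n-p\in\{7,8\}$ would force $\varepsilon\in\{5,6\}$, contradicting $\varepsilon\leq4$; hence $q=5$. Then $\varepsilon<2\cdot5/3$ gives $\varepsilon\leq3$, so $n-p=\varepsilon+2\leq5$, the asserted bound. Throughout, the delicate points are verifying that Hanson's exceptional cases are genuinely excluded (guaranteed by $n>26$) and that the borderline values $q=11$ and $\varepsilon=4$ are closed off, respectively by the congruence constraint of Lemma \ref{epsi4} and by the size constraints on $n-p$.
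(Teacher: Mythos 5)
Your proof is correct and takes essentially the same route as the paper: the paper's own proof is a one-line assertion that the statement follows from Lemmas \ref{Hanson}, \ref{epsi5}, \ref{epsi4} and \ref{Pi}, and your argument is precisely the intended combination of those four facts (Hanson applied to $\Pi$ against the bound of Lemma \ref{Pi} to get $\varepsilon<2q/3$, Lemma \ref{epsi5} to get $\varepsilon\geq q-4$, and Lemma \ref{epsi4} to kill the boundary case $q=11$), with the arithmetic the paper leaves implicit now spelled out.
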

\begin{proof}
The statement of Lemma follows from Lemmas \ref{Hanson}, \ref{epsi5}, \ref{epsi4} and  \ref{Pi}.
\end{proof}
\begin{lem}
$\varepsilon=0$
\end{lem}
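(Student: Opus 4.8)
The plan is to assume $\varepsilon>0$ and squeeze the accumulated constraints down to finitely many arithmetic possibilities, then kill each by a number-theoretic check. First I would collect what the preceding lemmas give under $\varepsilon>0$: Lemma~\ref{epsilon} forces $q\le 7$, hence $q\in\{5,7\}$ (recall $q>3$), and $\varepsilon\le4$; Lemma~\ref{epsi5} gives $d:=n-p-\varepsilon\le4$; and Lemma~\ref{epsi4} rules out $d=4$ unless $3\mid q-1$, i.e. forbids the pair $(q,d)=(5,4)$. Substituting the two admissible shapes $n=p+q$ and $n-1=p+q$ (so $n-p=q$ or $q+1$) into $0\le d\le4$ together with the exclusions above leaves only a short explicit list of data $(n\text{ or }n-1=p+q,\;q,\;\varepsilon,\;d)$, with $\varepsilon\in\{2,3,4\}$ and $d\in\{2,3,4\}$ in every surviving case.

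Next I would turn each surviving case into a statement about consecutive smooth integers via Lemma~\ref{Pi}: the product $\Pi=\prod_{i=n-\varepsilon+1}^{n} i$ has no prime divisor exceeding $q$, so the $\varepsilon$ consecutive integers $n-\varepsilon+1,\dots,n$ are simultaneously $q$-smooth with $q\le7$. Since $\varepsilon\ge2$ throughout, this is a run of at least two consecutive $q$-smooth integers; as $n\ge27$ these integers all exceed $\varepsilon$, so Lemma~\ref{Hanson} applies and recovers the consistency bound $q>3\varepsilon/2$ (matching $\varepsilon\le3$ for $q=5$ and $\varepsilon\le4$ for $q=7$). The decisive extra input is the classical finiteness of runs of consecutive $q$-smooth integers for $q\le7$ (St\o{}rmer's theorem): a run of length $\ge2$ forces $n$ below an explicit bound, confining $n$ to a finite, effectively computable set. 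A useful simplification is the parity remark that $p+q$ is even, so in the shape $n=p+q$ the top integer $n$ is even while $n-1$ is an odd $q$-smooth number, and symmetrically for $n-1=p+q$; this decides the parity of the extremal member of the run in each subcase.

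Finally, for each of the finitely many surviving $n\ge27$ I would invoke the two hypotheses not yet used: that one of $n,n-1$ equals $p+q$ with $p$ prime, and that the window $\{n-\varepsilon,\dots,n\}$ contains no prime. When $q=5$ the surviving subcases require either a pair ($\varepsilon=2$) or a triple ($\varepsilon=3$) of consecutive $5$-smooth integers ending above $26$; but the largest triple of consecutive $5$-smooth integers is $(8,9,10)$, and the unique pair above $26$ is $(80,81)$, whose parity places it outside the admissible $\varepsilon=2$ subcase, so the $q=5$ subcases are empty. When $q=7$ a few short runs survive, such as the triple $(48,49,50)$; but there the sum-of-two-primes representation and the prime-free window clash—for instance $\{47,48,49,50\}$ contains the prime $47$—and each remaining candidate is eliminated by hand, either because the window acquires a prime or because $n-q$ (respectively $n-1-q$) is composite. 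This contradiction forces $\varepsilon=0$. I expect this last, number-theoretic step to be the main obstacle: the group-theoretic work is already complete in Lemmas~\ref{alpha}--\ref{epsilon}, so no further structural information about $G$ is needed, but one must control the finitely many runs of consecutive $q$-smooth integers and verify directly that none is compatible with both a Goldbach-type representation of $n$ or $n-1$ and a prime-free window $\{n-\varepsilon,\dots,n\}$.
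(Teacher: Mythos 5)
Your proof is correct in outline, but it takes a genuinely different route from the paper's. The paper stays inside the congruence framework already used in Lemma~\ref{Pi}: assuming $\varepsilon>0$, it works with $|g^K|=\alpha/|\overline{g}^{G/K}|$ and the fact that $p$ divides $|g^K|_t-1$ for every prime divisor $t$ of $|g^K|$ (coprime action of the $p$-element $g$ on $K$); a prime $t$ dividing only one member of $\Lambda=\{n-\varepsilon+1,\dots,n\}$ is excluded exactly as in Lemma~\ref{Pi}, while a prime dividing two members must equal $3$ because $\varepsilon\le4$, and then $|g^K|_3\le 3^{l+1}$ with $3^l$ bounded by a single member of $\Lambda$ is too small to satisfy the congruence modulo the large prime $p$. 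You instead discard everything except the smoothness conclusion of Lemma~\ref{Pi} and finish by St{\o}rmer's theorem: a run of $\varepsilon\ge2$ consecutive $q$-smooth integers with $q\le7$ ending at $n\ge27$ exists only for the pair $(80,81)$ and the triple $(48,49,50)$, each of which you kill by parity of $p+q$, by the primality requirement on $p$, or by the prime $47$ sitting in the window $\{47,\dots,50\}$. Your enumeration does check out --- in particular $\varepsilon=1$ really is excluded, though you should make that step explicit (in each of the four shapes $n$ or $n-1=p+q$ with $q\in\{5,7\}$, the value $\varepsilon=1$ forces $d=n-p-\varepsilon\ge5$, contradicting Lemma~\ref{epsi5}, except for $q=5$, $n=p+q$, where $d=4$ contradicts Lemma~\ref{epsi4} since $3\nmid q-1$). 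What each approach buys: the paper's argument is self-contained, uniform in $n$, and uses no hypotheses beyond those already consumed by Lemmas~\ref{alpha}--\ref{epsilon}; yours is more concrete and elementary in flavour but imports an external classical result (St{\o}rmer/Lehmer, not in the paper's bibliography) together with the completeness of the lists of consecutive $5$- and $7$-smooth runs, and it additionally leans on the Theorem's prime-free-window hypothesis to dispose of $n=50$, which the paper's own proof of this lemma does not need.
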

\begin{proof}
Assume that $\varepsilon>0$. Then $|g^K|=\alpha/ |\overline{g}^{G/K}|$. It is clear that if $\varepsilon>1$, therefore
there exists an odd prime divisor $t$ of $|g^K|$. We can show that $p$ divides $|g^K|_t-1$. If $t$ divides only one
number from the set $\Lambda=\{n-\varepsilon+1,...,n\}$, then we can obtain a contradiction as in Lemma \ref{Pi}. By
Lemma \ref{epsilon} it follows that $\varepsilon\leq4$. Since $t$ divides two numbers from the set $\Lambda$ we have $t=3$.
Therefore, $|g^K|_t\leq t^{l+1}$, where $t^l$ divides only one number from the set $\Lambda$. Analyzing the residue of
the division of $t^l$ on $p$ it is easy to get a contradiction.
\end{proof}

\begin{lem}
$G\simeq Alt_n$.
\end{lem}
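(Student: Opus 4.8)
The plan is to prove that the kernel $K$ is trivial and then finish with Lemma~\ref{AnSn}. Since the preceding lemma gives $\varepsilon=0$, we have $S\simeq Alt_n$ and $Alt_n\le G/K\le Sym_n$. First I would record that $p\nmid|K|$: as $n=p+q$ or $n=p+q+1$ with $q<p$ we have $n<2p$, hence $|Alt_n|_p=p$, and since $|Alt_n|$ divides $|G|$ (Lemma~\ref{Order}) while $|G|_p=p$, the prime $p$ cannot divide $|K|$ (this can also be obtained exactly as in Lemma~\ref{pK}). Fix $g$ with $|g|=p$, so $|g^G|=\alpha=n!/p(n-p)!$ by Lemma~\ref{alpha}. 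Its image $\overline g$ in $S$ is nontrivial, because an element of order $p$ cannot lie in the $p'$-group $K$, and it has order $p$. Since $n<2p$, every element of order $p$ in $Alt_n$ is a single $p$-cycle, whose centralizer in $Alt_n$ is $\langle\overline g\rangle\times Alt_{n-p}$; hence $|\overline g^S|=n!/p(n-p)!=\alpha$.

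Because $p\nmid|K|$, Lemma~\ref{factorKh}(iv) gives $C_{G/K}(\overline g)=C_G(g)K/K$, so that $|g^G|=|g^K|\cdot|\overline g^{G/K}|$. Now $|\overline g^S|$ divides $|\overline g^{G/K}|$, which divides $|g^G|=\alpha=|\overline g^S|$; therefore all three coincide and $|g^K|=1$, i.e. $g$ centralizes $K$. Suppose now $K\ne1$. For every $h\in K$ the elements $g$ and $h$ commute and have coprime orders, so $\alpha=|g^G|$ divides $|(gh)^G|$; recalling that $\alpha$ is maximal by divisibility in $N(G)$, this forces $|(gh)^G|=\alpha$. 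Combined with $C_G(gh)=C_G(g)\cap C_G(h)$ (Lemma~\ref{factorKh}(iii)) and $C_G(gh)\le C_G(g)$ with equal index, we get $C_G(g)\le C_G(h)$ for every $h\in K$. Thus $C_G(g)$ centralizes $K$, and in particular $K$ is abelian.

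The image of $C_G(g)$ in $G/K$ equals $C_{G/K}(\overline g)$, which contains $\langle\overline g\rangle\times Alt_{n-p}$, and since $n-p\in\{q,q+1\}$ with $q>3$ the group $Alt_{n-p}$ is nonabelian simple. As $C_G(K)\unlhd G$ and $C_G(g)\le C_G(K)$, the image of $C_G(K)$ is a normal subgroup of $G/K$ containing $Alt_{n-p}$; by the description of the normal subgroups of $Alt_n$ and $Sym_n$ this image contains $S=Alt_n$. Hence the full preimage $G_0$ of $S$ satisfies $G_0\le C_G(K)$, so $K\le Z(G_0)$. If $G/K=Alt_n$, then $G_0=G$ and $K\le Z(G)=1$, a contradiction. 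If $G/K=Sym_n$, then $[G:G_0]=2$, and for any $1\ne h\in K$ we have $G_0\le C_G(h)$, so $|h^G|\in\{1,2\}$; the value $1$ contradicts $Z(G)=1$, while $|h^G|=2$ is impossible since the simple group $Alt_n$ has no subgroup of index $2$, so $2\notin N(Alt_n)=N(G)$. Therefore $K=1$, whence $Alt_n\le G\le Sym_n$, and Lemma~\ref{AnSn} excludes $G=Sym_n$, giving $G\simeq Alt_n$.

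The step I expect to be the main obstacle is the passage from \emph{$g$ centralizes $K$} to an actual contradiction: the maximality of $\alpha$ only delivers that $C_G(g)$ centralizes $K$, and upgrading this local fact to the global statement that the entire preimage of $S$ centralizes $K$ needs the normal-closure argument through the nonabelian simple group $Alt_{n-p}$; the leftover possibility $G/K=Sym_n$ then has to be discarded separately using the absence of a conjugacy class of size $2$ in $Alt_n$.
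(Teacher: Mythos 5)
Your overall strategy is the paper's: show that $g$ centralizes $K$, upgrade this to the statement that the full preimage $G_0$ of $S$ centralizes $K$ (the paper's terse ``$K.S$ is a central extension''), and then rule out $K\neq 1$ using $Z(G)=1$ before invoking Lemma \ref{AnSn}. Your first step is correct and in fact more carefully justified than in the paper (the cleanest source for $p\nmid|K|$, by the way, is simply $p\in\Omega$ together with Lemma \ref{Omega}; your ``$|G|_p=p$'' argument as stated is circular), and the normal-closure argument through the unique minimal normal subgroup $S$ of $G/K$ is a reasonable way to make the central-extension claim precise. The endgame (class size at most $2$, no subgroup of index $2$ in $Alt_n$) is clean, \emph{given} $K\leq Z(G_0)$.

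The genuine gap is in the bridge. You assert that $\alpha=|g^G|=n!/p(n-p)!$ is maximal by divisibility in $N(G)$, and this is your only tool for forcing $|(gh)^G|=\alpha$, hence $C_G(g)\leq C_G(h)$ for all $h\in K$, hence $K$ abelian. But $\alpha$ is \emph{not} maximal by divisibility in $N(Alt_n)$: an element of cycle type $(p,q)$ (with a fixed point if $n=p+q+1$) has centralizer of order $pq$ contained in $Alt_n$, so its class size $n!/2pq$ lies in $N(G)$ and is a proper multiple of $\alpha$, the quotient being $(n-p)!/2q>1$ since $n-p\geq q>3$ (for $n=12$, $p=7$, $q=5$ one gets $12!/70=12\alpha$). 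So $|(gh)^G|$ may be a proper multiple of $\alpha$, your derivation of ``$C_G(g)$ centralizes $K$'' collapses, and without $K$ abelian the inclusion $C_G(K)K\supseteq G_0$ does not yield $G_0\leq C_G(K)$, i.e. $K\leq Z(G_0)$. (The phrase ``maximal by divisibility'' in the proof of Lemma \ref{alpha} refers to $n!/pq$ under a counterfactual hypothesis, not to $n!/p(n-p)!$.) This is exactly why the paper's final contradiction is run with a different element: it takes $x$ with $|x^S|=n!/\theta$, $\theta\in\{n-1,n-2\}$ coprime to $|h|$, whose centralizer is as small as possible so that its class size genuinely is maximal by divisibility. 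To repair your argument you would need either an independent proof that $K$ is abelian, or a case split on $Z(K)$: if $Z(K)\neq 1$ your index-two argument applies to $1\neq h\in Z(K)$, while if $Z(K)=1$ one gets $G_0=(C_G(K)\cap G_0)\times K$ and must still produce a contradiction along the paper's lines.
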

\begin{proof}
Since $|\overline{g^S}|=|g^S|$, we see that $K<C_G(g)$. Therefore, $K.S$ is a central extension. Since $Z(G)=1$, it
follows that $Z(K)=1$. Assume that $K\not=1$. Let $h\in K$ be an element of prime order $t$. Then $t$ is co prime to
the number $\theta\in \{n-1,n-2\}$. There exists $x\in G$ such that $|x|=\theta$ or $|x|=\theta/2 $ and $|x^S|=n!/\theta$.
It's obvious that $|x^S|$ is maximal by divisibility in $N(G)$. By Lemma \ref{factorKh} we obtain $|(hx')^G|>|x^S|$,
where $x'G$ is a pre-image in $G$ of $x$; a contradiction. Thus $K=1$. By Lemma \ref{AnSn} we obtain the statement
of Lemma.
\end{proof}
The Proposition and Theorem proved.

Corollary follows from Lemma \ref{GorT} and Theorem.

\thispagestyle{empty}
I.B. Gorshkov\\
N. N. Krasovskii Institute of Mathematics and Mechanics\\
16 S. Kovalevskaya St., Ekaterinburg, 620990, Russia\\
E-mail address: ilygor8@gmail.com

\begin{thebibliography}{1}
\bibitem{Kour} V. D. Mazurov, E. I. Khukhro, Eds., The Kourovka Notebook:
Unsolved Problems in Group Theory, Russian Academy of Sciences Siberian Division,
Institute of Mathematics, Novosibirsk, Russia, 18th edition, 2014.

\bibitem{Ah} N. Ahanjideh, M. Ahanjideh,  On the validity of Thompson's conjecture for finite simple groups. Comm. Algebra 41:11 (2013), 4116-4145.

\bibitem{AD} S. H. Alavi and A.Daneshkhah, A new characterization of alternating
and symmetric groups, Journal of Applied Mathematics and Computing, 17:1 (2005), 245-258.

\bibitem{VasT}  A. V. Vasil'ev, On Thompson's Conjecture, Siberian electronic mathematical reports, {\bf 6} (2009), 457-464.

\bibitem{GorTom} I. Gorshkov, Thompson’s conjecture for simple groups with connected prime graph, Algebra and Logic, 51:2 (2012), 111-127.

\bibitem{Xu} M. Xu, Thompson's conjecture for alternating group of degree 22, Frontiers of Mathematics in China, 8:5 (2013),
1227-1236.

\bibitem{Liu} S. Liu, On Thompson's conjecture for alternating group $A_{26}$, Italian Journal of Pure and Applied Mathematics, 32, 2014, 525-532.

\bibitem{GorA} I. B. Gorshkov, Towards Thompson's conjecture for alternating and symmetric groups, Journal of Group Theory, 19:2 (2016), 331-336.

\bibitem{GorA2} I. B. Gorshkov, On Thompson's conjecture for alternating and symmetric groups of degree more then 1361,
Trudy Instituta Matematiki i Mekhaniki UrO RAN, 22:1 (2016), 44–51.

\bibitem{Gore} D. Gorenstein, Finite groups, Harper and Row, New York, 1968.

\bibitem{Vac} I. Vacula, On the structure of finite groups isospectral to an alternating group, Proceedings of the Steklov
Institute of Mathematics, 272:1 (supplement) (2011), 271-286.

\bibitem{Kantor} B. Fein, W. Kantor, M. Schacher, Relative Brauer groups II, J. Reine Angew. Math. 328 (1981), 39-57.

\bibitem{Hanson} D. Hanson, On a theorem of Sylvester and Schur. Canad. Math. Bull. 16 (1973), 195-199.

\bibitem{VasBig} A. Vasil'ev, On finite groups isospectral to simple classical groups, J. Algebra, 423 (2015), 318-374.





\end{thebibliography}
\end{document}